\def\Y_#1{y_{\!#1}}
\def\E{\mathbb{E}}
\def\TV{\mathrm{TV}}
\def\cov{\operatorname{cov}}
\def\Var{\operatorname{Var}}
\def\Exp{\operatorname{Exp}}
\def\RHMC{\operatorname{RHMC}}
\def\HMC{\operatorname{HMC}}
\def\IAC{\operatorname{IAC}}
\def\MSD{\operatorname{MSD}}
\def\PiBG{\Pi_{\operatorname{BG}}}
\newtheorem{theorem}{Theorem}[section]
\newtheorem{lemma}[theorem]{Lemma}
\newtheorem{remark}[theorem]{Remark}
\newtheorem{prop}[theorem]{Proposition}
\newtheorem{hypothesis}{Hypothesis}[section]
\theoremstyle{definition}
\newtheorem{defn}{Definition}[section]
\newtheorem{algorithm}{Algorithm}[section]
\theoremstyle{remark}
\newtheorem{rem}{Remark}[section]
\begin{document}

\begin{frontmatter}

\title{Randomized Hamiltonian Monte Carlo}
\runtitle{Randomized HMC}


\begin{aug}
\author{\fnms{Nawaf} \snm{Bou-Rabee}\thanksref{t1,m1}\corref{}\ead[label=e1]{nawaf.bourabee@rutgers.edu}}
\and
\author{\fnms{Jes\'{u}s} \snm{Mar\'{i}a Sanz-Serna}\thanksref{t2,m2}\ead[label=e2]{jmsanzserna@gmail.com}}

\affiliation{Rutgers University Camden\thanksmark{m1} and  Universidad Carlos III de Madrid\thanksmark{m2}}

\address{Department of Mathematical Sciences \\ Rutgers University Camden \\ 311 N 5th Street \\ Camden, NJ 08102 \\ \printead{e1}}
\address{Departamento de  Matem\'aticas \\ Universidad Carlos III de Madrid \\ Avenida de la Universidad 30 \\ E--28911 Legan\'es (Madrid), Spain \\ \printead{e2}}

\thankstext{t1}{N.~Bou-Rabee was partly supported by NSF grant DMS-1212058.}
\thankstext{t2}{J.~M.~Sanz-Serna has been supported by  Ministerio de Eco\-nom\'{\i}a y Comercio, Spain (proj\-ect  MTM2013-46553-C3-1-P).}
\end{aug}

\runauthor{N. Bou-Rabee \and J.~M.~Sanz-Serna}

\begin{abstract}
Tuning the durations of the Hamiltonian flow in Hamiltonian Monte Carlo (also called Hybrid Monte Carlo) (HMC) involves a tradeoff between computational cost and sampling quality, which is typically challenging to resolve in a satisfactory way.   In this article we present and analyze a randomized HMC method (RHMC), in which these durations are i.i.d.~exponential random variables whose mean is a free parameter.   We focus on the small time step size limit, where the algorithm is rejection-free and the computational cost is proportional to the mean duration.   In this limit, we prove that RHMC is geometrically ergodic under the same conditions that imply geometric ergodicity of the solution to underdamped Langevin equations.  Moreover, in the context of a multi-dimensional Gaussian distribution, we prove that the sampling efficiency of RHMC, unlike that of constant duration HMC, behaves in a regular way. This regularity is also verified numerically in non-Gaussian target distributions. Finally we suggest variants of RHMC for which the time step size is not required to be small.
\end{abstract}


\begin{keyword}[class=MSC]
\kwd[Primary ]{60J25}
\kwd[; Secondary ]{62D05, 60J25, 60H30, 37A50}
\end{keyword}

\begin{keyword}
\kwd{Randomization}
\kwd{Markov Chain Monte Carlo}
\kwd{Hamiltonian Monte Carlo}
\kwd{Hybrid Monte Carlo}
\kwd{Lyapunov Functions}
\kwd{Geometric Ergodicity}
\kwd{Integrated Autocorrelation Time}
\kwd{Equilibrium Mean Squared Displacement}
\end{keyword}

\end{frontmatter}


\section{Introduction}

In the present article we suggest  a randomized version of the Hamiltonian Mon\-te Carlo (also called Hybrid Monte Carlo) algorithm that, under very general hypotheses, may be proved to be {\em geometrically ergodic}.
The Hamiltonian Monte Carlo (HMC) algorithm is a general purpose Markov Chain Monte Carlo (MCMC) tool for sampling from a probability distribution $\Pi$ \cite{DuKePeRo1987,Li2008,Ne2011,Sa2014}. It offers the potential of generating proposed moves that are far away from the current location of the chain and yet may be accepted with high probability. The algorithm is based on integrating a Hamiltonian system and possesses two free parameters: the duration of the Hamiltonian flow and the  time step size of the integrator.
Unfortunately, the performance of HMC depends crucially on the values assigned by the user to those parameters; while for some parameter values HMC may be highly efficient, it is well known that, as discussed below, there are values for which the algorithm, in its simplest form, is not even ergodic.
The Randomized Hybrid Monte Carlo (RHMC) addresses these shortcomings of HMC.

We target probability distributions of the form
\begin{equation} \label{eq:target}
\Pi(dq) = C_0^{-1} \exp(- \Phi(q)) dq \;, \quad C_0 = \int_{\mathbb{R}^D} \exp(- \Phi(q)) dq\;,
\end{equation}
where the negative loglikelihood $\Phi: \mathbb{R}^D \to \mathbb{R}$ is seen as a potential energy function.  We assume that $\Phi$ is at least differentiable and such that $C_0<\infty$. As is the case with other MCMC methods, HMC does not require knowing the normalization factor $C_0$.  HMC enlarges the state space from $\mathbb{R}^D$ to $\mathbb{R}^{2D}$ and considers the Boltzmann-Gibbs distribution in this space
\begin{equation} \label{eq:boltzmann_gibbs}
 \PiBG(dq,dp) = C_0^{-1}(2 \pi)^{-D/2}  \exp(- H(q,p) ) dq dp  \;,
\end{equation}
where the artificial Hamiltonian function (or total energy) $H: \mathbb{R}^{2D} \to \mathbb{R}$ is taken to be
\begin{equation} \label{eq:hamiltonian}
H(q,p) = \frac{|p|^2}{2} + \Phi(q)\;.
\end{equation}
The vector $p$ plays the role of a mechanical momentum and the term $|p|^2/2$ is the corresponding kinetic energy. The target $\Pi$ is the marginal of $\PiBG$ on $q$; the marginal on $p$ is, of course,
the standard $D$-dimensional Gaussian $\mathcal{N}(0,1)^D$. More complicated kinetic energies of the form
$(1/2) p^TM^{-1}p$, with $M$ a symmetric positive definite mass matrix, may also be used, but, for notational simplicity,
we restrict our study to the Hamiltonian \eqref{eq:hamiltonian}.

The basic idea of HMC is encapsulated in the following procedure, where the duration $\lambda>0$ is a (deterministic) parameter whose value is specified by the user.
\begin{algorithm}[HMC] \label{algo:hmc}
Given the duration parameter $\lambda>0$ and the current state of the chain $X_0\in\mathbb{R}^D$, the method outputs a state $X_1\in\mathbb{R}^D$ as follows.
\begin{description}
\item[Step 1] Generate a $D$-dimensional random vector  $\xi_0 \sim \mathcal{N}(0,1)^D$.
\item[Step 2] Evolve over $[0,\lambda]$ Hamilton's equations associated to \eqref{eq:hamiltonian}
\begin{equation} \label{eq:hamiltonian_dynamics}
\begin{aligned}
& \dot{q} = p, \\
& \dot{p} = - \nabla \Phi(q),
\end{aligned}
\end{equation}
 with initial condition $(q(0), p(0)) = (X_0,\xi_0)$.
\item[Step 3] Output $X_1 = q(\lambda)$.
\end{description}
\end{algorithm}

Since Step 2 conserves the Boltzmann-Gibbs distribution, it is clear that the mapping $X_0\mapsto X_1$ preserves the target $\Pi$ and therefore may be used to generate a Markov Chain having $\Pi$ as an invariant distribution
(in fact the resulting chain is actually reversible with respect to $\Pi$). Note that Step 1 is easy to perform since it only involves generating a $D$-dimensional normal random vector. This step  is the only source of randomness in determining $X_1$ conditional on $X_0$.
The Hamiltonian flow in Step 2 is what, in principle, enables HMC to make large moves in state space that reduce correlations in the Markov chain $\{ X_i \}$.
Roughly speaking, one may hope that,  by increasing $\lambda$, $X_1$ moves away from $X_0$, thus reducing  correlation.  However, simple examples show that this outcome is far from assured. Indeed, for the univariate normal distribution with $\Phi(q) = q^2/2$,  Hamilton's equations coincide with those of the harmonic oscillator, $(d/dt)q = -p$, $(d/dt)q = p$, and the flow is a rotation in the $(q,p)$ plane with period $2\pi$. It is easy to see (see Section 5 for a fuller discussion) that, if $X_0$ is taken from the target distribution, as $\lambda$ increases from $0$ to $\pi/2$, the correlation between $X_1$ and $X_0$ decreases and for $\lambda = \pi/2$, $X_1$ and $X_0$ are independent. However increasing $\lambda$ beyond $\pi/2$ will cause an increase of the correlation and for $\lambda = \pi$, $X_1=-X_0$ and the chain is not ergodic.  For general distributions, it is likely that a small $\lambda$ will lead to a highly correlated chain, while choosing $\lambda$ too large  may cause the Hamiltonian trajectory to make a U-turn and fold back on itself, thus increasing correlation \cite{HoGe2014}.

 In practice, a formula for the exact solution used in Step 2 is  unavailable and a numerical solution is used instead. Thus, in addition to the duration $\lambda$ of the Hamiltonian flow in Step 2, another key parameter in the HMC method is the time step size $\Delta t$ used to generate this numerical solution. To correct the  bias introduced by time discretization error, a Metropolis-Hastings accept-reject step is also added \cite{MeRoRoTeTe1953,Ha1970}.    In order to keep the Metropolis-Hastings ratio  simple, typically a volume-preserving and reversible method is used to numerically simulate Hamilton's equations in Step 2 \cite{FaSaSk2014}. The integrator of choice is the Verlet method, which is second-order accurate and, like Euler's rule, only requires one new  evaluation of the gradient $\nabla \Phi(q)$ per step. Unfortunately, time discretization does not remove the complex dependence of correlation on the duration parameter $\lambda$. For instance, in the preceding  example where $\Phi(q) = q^2/2$, it is easy to check that if $\lambda$ is close to an integer multiple of $\pi$ and $\Delta t>0$ is suitably chosen, the Verlet numerical integration will result, for each $X_0$, in $X_1 = -X_0$ (a move that will be accepted by the Metropolis-Hasting step). To avoid such poor performance, HMC is typically operated with values of $\Delta t$ that are randomized \cite{Ma1989,Ne2011}. Since, due to stability restrictions, explicit integrators cannot operate with arbitrarily large values of the time step, $\Delta t$ is typically chosen  from a uniform distribution in an (often narrow) interval $(\Delta t_{\rm min}, \Delta t_{\rm max})$. In any case, the fact remains that increasing the duration parameter will increase the computational cost and may impair the quality of the sampling.

In this paper we randomize the duration of the Hamiltonian flow, cf.~\cite{Ma1989,CaLeSt2007,Ne2011}. More precisely, in RHMC the lengths of the time intervals of integration of the Hamiltonian dynamics at the different steps of the Markov chain are identically distributed exponential random variables; these durations are mutually independent and independent of the state of the chain.  In what follows we are primarily interested in the case where the procedure uses the exact Hamiltonian flow (or, in practical terms, where the integration is carried out with such a small value of $\Delta t$ which ensures that essentially all steps of the chain result in acceptance).  This leaves the mean duration as the only free parameter.  In this exact integration scenario, we prove that, regardless of the choice of the mean duration, RHMC is geometrically ergodic.  Furthermore, we show that the dependence of the performance of RHMC on this mean duration parameter is simpler than the dependence of the performance of HMC on its constant duration parameter.   A full discussion of the situation where time-discretization errors are taken into account requires heavy use of numerical analysis techniques and will be the subject of a future publication.  Nevertheless in Section~\ref{sec:outlook} we present two variants of RHMC, based on the ideas of Markov Chain Approximation methods (MCAM) \cite{KuDu2001, BoVa2016}, that replace the Hamiltonian flow by a numerical approximation and may be treated with the infinitesimal tools we employ in the exact integration scenario.

Section 2  provides a description of the RHMC method and its infinitesimal generator $L$.  In Section 3 we prove that the measure
$ \PiBG$ is  infinitesimally invariant  for $L$.  We then construct a Lyapunov function for RHMC that is of the same form as that used for the  Langevin equations and requires similar assumptions on the potential energy function \cite{SaSt1999, MaStHi2002, Ta2002}. Here it is important to point out that, while the Langevin dynamics explicitly includes friction, the dissipative behavior of RHMC comes from the randomization of the momentum. In particular, if the chain is at a location of high potential energy, the Hamiltonian dynamics will typically change a large part of the potential energy into kinetic energy; then, with high probability, the next momentum randomization will decrease the kinetic energy.
With this Lyapunov function, we extend a local semimartingale representation of the process.  It also follows that Dynkin's formula holds for functions that satisfy a mild growth condition.
Using Dynkin's formula we prove that the measure $ \PiBG$ is invariant  for RHMC.  Using a generating function for the Hamiltonian flow and a Duhamel formula for the Markov semigroup associated with the RHMC process, we prove that the transition probability distribution of RHMC satisfies a minorization condition.  We then invoke Harris' theorem to conclude that RHMC is geometrically ergodic with respect to $\PiBG$, for any choice of the mean duration parameter $\lambda$.

Section 4 considers the model problem of a multi-dimensional Gaussian target distribution \cite{Ma1989,Ne2011}.  For both RHMC and HMC, explicit formulas are derived for (i) the integrated autocorrelation time associated to the standard estimator of the mean of the target distribution; and (ii) the equilibrium mean-squared displacement.   These formulas imply that the sampling efficiency of RHMC behaves in a simple way, while the sampling efficiency of HMC displays complicated dependence on the duration parameter.
Section 5 presents numerical tests for a two-dimensional double well potential energy and a potential energy used in the chemistry literature for a pentane molecule.  These tests support our theoretical findings.  Two variants of RHMC that do not assume that integration errors are negligible are suggested in Section 6. The first of them is easily proved to have a Lyapunov function under suitable hypotheses but introduces a bias in the target distribution; the second removes the bias by allowing momentum flips at a rate dictated by a Metropolis-Hastings ratio. There is an Appendix devoted to Harris theorem.

To summarize, the main theoretical contributions of this paper are the following.
\begin{itemize}
\item a proof of a Foster-Lyapunov drift condition for the infinitesimal generator of RHMC;
\item a solution to a martingale problem for RHMC;
\item a proof that infinitesimal invariance of the measure $\PiBG$ and Dynkin's formula imply that the measure $\PiBG$ is invariant for RHMC;
\item a minorization condition for RHMC;
\item a proof that $\PiBG$ is the unique invariant measure of RHMC and that RHMC is geometrically ergodic (which combines all of the previous results); and,
\item introduced two practical implementations of RHMC, including one that is unbiased.
\end{itemize}

Let us finish this introduction by pointing out that the RHMC method is related to Anderson's impulsive thermostat common in molecular dynamics, which describes a molecular system interacting with a heat bath \cite{An1980, Li2007,ELi2008}.  The molecular system is modeled using Hamiltonian dynamics, and its interaction with a heat bath is modeled by collisions that cause an instantaneous randomization of the momentum of a randomly chosen particle.  The times between successive collisions are assumed to be i.i.d.~exponential random variables.  In Ref.~\cite{ELi2008},  E and Li prove that the Anderson thermostat on a hyper-torus is geometrically ergodic.  Since the state space is bounded, the main issue in that proof is the derivation of a minorization condition for the process.  Our proof of geometric ergodicity of the RHMC method can be modified to extend their result to an unbounded space.


\section{RHMC Method} \label{sec:rhmc}

Here we provide step-by-step instructions to produce an RHMC trajectory, and afterwards, introduce the infinitesimal generator $L$ of RHMC.


The RHMC method generates a right-continuous with left limits (c\`adl\`ag) Markov process $Z_t$.  While Algorithm~\ref{algo:hmc} was formulated in $\mathbb{R}^{D}$, the process $Z_t = (Q_t, P_t)$ is defined in the enlarged space $\mathbb{R}^{2 D}$ to include the possibility of partial randomization of the momentum, as in the generalized Hybrid Monte Carlo of Horowitz \cite{Ho1991, KePe2001, AkRe2008}.  This process $Z_t$ can be simulated by iterating the following steps. The mean duration $\lambda>0$ and the Horowitz angle $\phi \in (0, \pi/2]$ are deterministic parameters.
\begin{algorithm}[RHMC] \label{algo:rhmc}
Given the current time $t_0$ and the current state $Z_{t_0} = (Q_{t_0}, P_{t_0})$, the method computes the next momentum randomization time $t_1>t_0$ and the path of the process $Z_{s} = (Q_{s},P_{s})$ over $(t_0, t_1]$ as follows.
\begin{description}
\item[Step 1] Update time via $t_1 = t_0 + \delta t_0$ where $ \delta t_0 \sim \Exp(1/\lambda)$.
\item[Step 2] Evolve over $[t_0, t_1]$  Hamilton's equations \ref{eq:hamiltonian_dynamics}) associated with \eqref{eq:hamiltonian}
with initial condition $(q(t_0), p(t_0))  = (Q_{t_0}, P_{t_0})$.
\item[Step 3] Set \[
Z_s = (Q_s, P_s) = (q(s), p(s) ) \quad \text{for $t_0 \le s < t_1$} \;.
\]
\item[Step 4] Randomize momentum by setting \[
Z_{t_1} = (q(t_1), \cos(\phi) p(t_1) + \sin(\phi) \xi )
\] where $\xi \sim \mathcal{N}(0,1)^D$.
\item[Step 5] Output $X_1= q(t_1)$.
\end{description}
\end{algorithm}


On test functions $f \in C^1(\mathbb{R}^{2D})$, the infinitesimal generator of $Z$ is given by
 \begin{equation} \label{eq:generator}
L f(q,p) = \overbrace{\lambda^{-1} \E \left\{ f(  \Gamma (q,p) ) - f(q,p) \right\}}^\text{momentum randomization operator} + \underbrace{p^T \nabla_q f(q,p) - \nabla_q \Phi(q)^T \nabla_p f(q,p)}_\text{Liouville operator},
\end{equation}
The expectation in the momentum randomization operator is over the random variable $\Gamma(q,p)$ defined as
\begin{equation} \label{eq:momentum_randomization}
\Gamma(q,p) = (q, \cos(\phi) p + \sin(\phi) \xi)\;,
\end{equation}
where $\xi \sim \mathcal{N}(0,1)^D$ and $\phi \in (0, \pi/2]$.  A sample path of this process is given by the Hamiltonian flow associated with  \eqref{eq:hamiltonian} with intermittent and instantaneous jumps in momentum.  The random times between successive jumps are independent and exponentially distributed with mean $\lambda$.
 The Horowitz angle $\phi$ is a deterministic parameter that governs how much the  momentum immediately after a jump depends on the value of the momentum immediately prior to a jump.  The case $\phi=0$ in \eqref{eq:momentum_randomization} has to be excluded because it leads to $\Gamma(q,p)=(q,p)$ and then the generator $L$ reduces to the Liouville operator associated with the Hamiltonian $H$, which is in general not ergodic with respect to $\PiBG$.  Note that, if $\phi=\pi/2$, the random vector $\Gamma(q,p)$ does not depend on $p$ (complete momentum randomization).





\section{Geometric Ergodicity of RHMC} \label{sec:non_asymptotic}

\subsection{Overview}

In this section we prove that $\PiBG$ is the unique invariant probability measure of RHMC and that RHMC is geometrically ergodic.  Our main tool to prove geometric ergodicity is Harris Theorem.  In Appendix~\ref{sec:harris_theorem}, we recall this theorem in the present context of a continuous-time Markov process with an uncountable state space.  

A main ingredient in Harris theorem is Hypothesis~\ref{driftcondition} on the existence of a Lyapunov function, which we refer to as a {\em Foster-Lyapunov drift condition}. We formulate this condition in terms of an abstract infinitesimal generator whose precise meaning is given in Definition~\ref{defn:generator}.   Note that this definition of an infinitesimal generator accommodates the Lyapunov function $V: \mathbb{R}^{2D} \to \mathbb{R}$ introduced below because for any $t>0$ the process \[
V(Z_t) - V(z) - \int_0^t L V(Z_s) ds \;, \quad Z_0 = z \in \mathbb{R}^{2D}\;,
\]  is always a local martingale, and hence, according to Definition~\ref{defn:generator} the function $V$ belongs to the domain of $L$.  Note that here we assume that this Lyapunov function is continuously differentiable, since the operator $L$ involves partial derivatives.   After showing that $V$ is a Lyapunov function for $L$, we apply this Lyapunov function to solve the martingale problem for the operator $L$ on functions that are $C^1$ with globally Lipschitz continuous derivative.  This solution is used to show that $\PiBG$ is an invariant measure for RHMC.  We then prove that the transition probabilities of RHMC satisfy a minorization condition given in Hypothesis~\ref{minorization}.   With these pieces in place, we invoke Harris Theorem to conclude that $\PiBG$ is the unique invariant measure for RHMC, and that RHMC is geometrically ergodic.

To establish a Foster-Lyapunov drift condition, we use a Lyapunov function $V$ originally introduced to prove stochastic stability of a Hamiltonian system with dissipation and random impulses; see \S2.2 and equation (14) of \cite{SaSt1999}.  See also Section 3 of Ref.~\cite{MaStHi2002} and Section 2 of Ref.~\cite{Ta2002} for an application of this Lyapunov function to prove geometric ergodicity of the solution to underdamped Langevin equations with non-globally Lipschitz coefficients.   This Lyapunov function $V: \mathbb{R}^{2D} \to \mathbb{R}$ is of the form:
\begin{equation} \label{eq:lyapunovfunction}
V(z) = H(z) + c_1 \langle q, p \rangle + c_2 \frac{|q|^2}{2} \;, \quad z=(q,p) \in \mathbb{R}^{2D} \;,
\end{equation}
where $H$ is the Hamiltonian given earlier in \eqref{eq:hamiltonian}, and $c_1$ and $c_2$ are constants given by:
\begin{equation} \label{eq:c1+c2}
c_1 = \frac{\lambda^{-1}}{4} \sin^2(\phi) ~~\text{and}~~ c_2 =  \lambda^{-1} c_1 (1 -  \cos(\phi) ) \;.
\end{equation}
Since $\lambda>0$ and $\phi \in (0, \pi/2]$, note from \eqref{eq:c1+c2} that both $c_1$ and $c_2$ are positive.

Throughout this section we will use the following conditions on the potential energy $\Phi(q)$.  Note that not all of these assumptions will be required for every statement, but we find it notationally convenient to have a single set of assumptions to refer to.
\begin{hypothesis} \label{hypothesis_on_phi}
The potential energy $\Phi \in C^2( \mathbb{R}^D)$ satisfies the following conditions.
\begin{description}
\item[H1.]  One has $\Phi(q) \ge 0$ and $\int_{\mathbb{R}^{D}} |q|^2 \exp(-\Phi(q)) dq < \infty$.
\item[H2.] Let $c_1$ and $c_2$ be the constants appearing in \eqref{eq:lyapunovfunction}. Then there exist $a >0$ and $ b\in (0,1)$ such that \[
\frac{1}{2} \langle \nabla_q \Phi(q), q \rangle \ge b \Phi(q) + \frac{ (c_1 b)^2 + c_2 b (1-b) }{2 (1-b)} |q|^2 - a
\] for all $q \in \mathbb{R}^D$. 
\end{description}
\end{hypothesis}
We stress that these assumptions on the potential energy function are typically made to prove geometric ergodicity of the solution to underdamped Langevin equations. For instance, see Equation (13) of Ref.~\cite{SaSt1999}, Hypothesis 1.1 in Ref.~\cite{Ta2002}, and Condition 3.1 in Ref.~\cite{MaStHi2002}.   Hypothesis {\bf H1} and \eqref{eq:lyapunovfunction} imply that for any constants $c_1, c_2 \in \mathbb{R}$, we have that \[
\int_{\mathbb{R}^{2D}} V(z) \PiBG(dz) < \infty \;.
\]
In other words, this Lyapunov function is integrable with respect to $\PiBG$.   

The hypothesis that the potential is bounded from below by itself guarantees that the kinetic energy and therefore the momenta are bounded as time increases. It follows that the configuration variables grow at most linearly with time and therefore solutions of Hamilton's equations are well-defined for all real time.   

%

\subsection{The Measure $\PiBG$ is an Infinitesimally Invariant Measure}

As expected, the Boltzmann-Gibbs distribution in \eqref{eq:boltzmann_gibbs} is an infinitesimally invariant probability measure for the process $Z$. By implication the target $\Pi$ is infinitesimally invariant for the process $Q$.   To state this result, let $C^{\infty}_c(\mathbb{R}^{2D})$ denote the space of compactly supported smooth functions from $\mathbb{R}^{2D}$ to $\mathbb{R}$.

%
%

\begin{prop} \label{prop:infinitesimal_invariance}
Suppose Hypothesis~\ref{hypothesis_on_phi} {\bf H1} holds.  Then for any $f \in C^{\infty}_c(\mathbb{R}^{2D})$ we have that \[
\int_{\mathbb{R}^{2D}} L f(z) \; \PiBG(dz) = 0 \;.
\]
\end{prop}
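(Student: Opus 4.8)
The plan is to split the integral according to the two summands in the generator \eqref{eq:generator} and to show that each one contributes zero separately. Write $Lf = \lambda^{-1} R f + \mathcal{X} f$, where $Rf(q,p) = \E\{f(\Gamma(q,p))\} - f(q,p)$ is the momentum-randomization part and $\mathcal{X} f(q,p) = p^T \nabla_q f(q,p) - \nabla_q \Phi(q)^T \nabla_p f(q,p)$ is the Liouville operator. Under {\bf H1} the measure $\PiBG$ is a well-defined probability measure, and since $f$ is bounded with compact support all the integrals that appear are absolutely convergent and Fubini's theorem applies, so the splitting is legitimate.

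For the Liouville term I would exploit the fact that the vector field $b(q,p) = (p,\, -\nabla_q\Phi(q))$ underlying $\mathcal{X}$ is divergence free, $\nabla_q\cdot p - \nabla_p\cdot\nabla_q\Phi(q) = 0$, and annihilates the Hamiltonian, $\mathcal{X} H = p^T\nabla\Phi(q) - \nabla\Phi(q)^T p = 0$, hence annihilates the density $\exp(-H)$. Therefore $(\mathcal{X} f)\exp(-H) = \mathcal{X}\big(f\exp(-H)\big) = \div\big(f\exp(-H)\,b\big)$, and integrating over $\mathbb{R}^{2D}$ the divergence theorem applies with no boundary contribution, because $f$ — and hence $f\exp(-H)\,b$, which is $C^1$ since $\Phi\in C^2$ — has compact support. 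This gives $\int_{\mathbb{R}^{2D}} \mathcal{X} f\,\PiBG(dz) = 0$; it is just the infinitesimal form of the familiar statement that the Hamiltonian flow preserves $\PiBG$.

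For the momentum-randomization term it is enough to check that the map $\Gamma$ of \eqref{eq:momentum_randomization} pushes $\PiBG$ forward onto itself, for then $\int_{\mathbb{R}^{2D}} \E\{f(\Gamma(q,p))\}\,\PiBG(dz) = \E[f(\Gamma(Z))] = \E[f(Z)] = \int_{\mathbb{R}^{2D}} f\,\PiBG(dz)$ whenever $Z\sim\PiBG$ and $\xi$ is independent of $Z$, and the $R$-term integrates to zero. To see that $\Gamma(Z)\stackrel{d}{=}Z$, recall that under $\PiBG$ the momentum $p$ is $\mathcal{N}(0,1)^D$, independent of $q\sim\Pi$; since $\xi\sim\mathcal{N}(0,1)^D$ is independent of $p$, the rotated momentum $\cos(\phi)p + \sin(\phi)\xi$ is again $\mathcal{N}(0,1)^D$ — its covariance is $(\cos^2\phi + \sin^2\phi)I = I$ — and it is still independent of $q$, which $\Gamma$ leaves unchanged.

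Adding the two contributions proves the proposition. I do not anticipate a real obstacle: the only points requiring a line of care are the absolute integrability that licenses Fubini and the integration by parts (both immediate from $f\in C_c^\infty$ and {\bf H1}), together with the structural observation that $\mathcal{X}$ is simultaneously divergence free and energy conserving, which is the classical Liouville/Hamiltonian property.
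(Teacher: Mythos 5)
Your proof is correct and follows essentially the same route as the paper's: split $L$ into the momentum-randomization and Liouville parts, use that $\Gamma$ preserves $\PiBG$ (the rotated momentum is again standard Gaussian, independent of $q$) for the first, and integration by parts with no boundary terms (since $f\in C^\infty_c$) for the second. You merely spell out the details — the divergence-free/energy-conserving structure of the Hamiltonian vector field and the Gaussian rotation identity — that the paper states more tersely.
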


\begin{proof}
Hypothesis~\ref{hypothesis_on_phi} {\bf H1} guarantees that $\int_{\mathbb{R}^{2D}} e^{-H(z)} dz < \infty$, and hence, the measure $\PiBG$ is a probability measure.
Note that $\PiBG$ is an invariant probability measure for the momentum randomization operator in \eqref{eq:generator}, and hence, \[
\int_{\mathbb{R}^{2D}} \E \left\{ f(\Gamma(q,p)) - f(q,p) \right\}  \PiBG(dq,dp) = 0 \;.
\]
Moreover, integration by parts shows that the Liouville operator leaves $\PiBG$ infinitesimally invariant.  In particular, the boundary terms resulting from the integration by parts vanish  because $f$ is compactly supported.  
\end{proof}

Later in this section, we strengthen this result to $\PiBG$ is the unique invariant probability measure for RHMC.

\subsection{Foster-Lyapunov Drift Condition}

The following Lemma is remarkable because it states that the infinitesimal generator $L$ possesses a Lyapunov function, even though RHMC does not incorporate explicit dissipation.  

%
%

\begin{lemma} \label{lemma:rhmc_drift_condition}
Suppose Hypothesis~\ref{hypothesis_on_phi} holds, and $V(z)$ is given by \eqref{eq:lyapunovfunction}.  Then 
there exist real positive constants $c_1$, $c_2$, $\gamma$ and $K$ such that: \begin{equation}
L V(z) \le - \gamma V(z) + K
\end{equation}
 for all $z \in \mathbb{R}^{2D}$. Moreover, $V$ is nonnegative and  \[
 \lim_{|z| \to \infty} V(z) = \infty \;.
\]
\end{lemma}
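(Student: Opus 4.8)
The plan is to compute $LV(z)$ directly using the explicit form of the generator in \eqref{eq:generator}, treating the momentum randomization operator and the Liouville operator separately, and then to absorb the bad terms using Hypothesis~\ref{hypothesis_on_phi} {\bf H2}. First I would apply the Liouville operator $p^T \nabla_q - \nabla_q\Phi(q)^T\nabla_p$ to $V(z) = H(z) + c_1\langle q,p\rangle + c_2|q|^2/2$. Since the Liouville operator annihilates $H$ (energy conservation), the only contributions come from the cross term $c_1\langle q,p\rangle$ and the quadratic term $c_2|q|^2/2$: one gets $c_1(|p|^2 - \langle \nabla_q\Phi(q), q\rangle) + c_2 \langle q,p\rangle$. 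Next I would apply the momentum randomization operator $\lambda^{-1}\E\{f(\Gamma(q,p)) - f(q,p)\}$, using $\Gamma(q,p) = (q,\cos(\phi)p + \sin(\phi)\xi)$ with $\xi\sim\mathcal N(0,1)^D$ and $\E[\xi]=0$, $\E|\xi|^2 = D$. Because $\Gamma$ leaves $q$ unchanged, the terms in $V$ involving only $q$ (namely $\Phi(q)$ and $c_2|q|^2/2$) are unaffected; the term $|p|^2/2$ contributes $\lambda^{-1}\tfrac12(\cos^2(\phi)-1)|p|^2 + \lambda^{-1}\tfrac{\sin^2(\phi)}{2}D = -\lambda^{-1}\tfrac{\sin^2(\phi)}{2}|p|^2 + \text{const}$; and the term $c_1\langle q,p\rangle$ contributes $\lambda^{-1}c_1(\cos(\phi)-1)\langle q,p\rangle$.

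Collecting everything, $LV(z)$ equals
\[
\Big(c_1 - \tfrac{\lambda^{-1}\sin^2(\phi)}{2}\Big)|p|^2 \;-\; c_1\langle\nabla_q\Phi(q),q\rangle \;+\; \big(c_2 - \lambda^{-1}c_1(1-\cos(\phi))\big)\langle q,p\rangle \;+\; \lambda^{-1}\tfrac{\sin^2(\phi)}{2}D.
\]
The choice of constants in \eqref{eq:c1+c2} is exactly what kills the $\langle q,p\rangle$ coefficient (making $c_2 = \lambda^{-1}c_1(1-\cos\phi)$) and makes the $|p|^2$ coefficient equal to $c_1 - \lambda^{-1}\sin^2(\phi)/2 = -\lambda^{-1}\sin^2(\phi)/4 = -c_1 < 0$. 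So $LV(z) = -c_1|p|^2 - c_1\langle\nabla_q\Phi(q),q\rangle + \lambda^{-1}\tfrac{\sin^2(\phi)}{2}D$. The goal is to bound this by $-\gamma V(z) + K$, i.e.\ by $-\gamma(H(z) + c_1\langle q,p\rangle + c_2|q|^2/2) + K$. I would invoke {\bf H2} in the form $\tfrac12\langle\nabla_q\Phi(q),q\rangle \ge b\Phi(q) + \tfrac{(c_1 b)^2 + c_2 b(1-b)}{2(1-b)}|q|^2 - a$ to convert the $-c_1\langle\nabla_q\Phi(q),q\rangle$ term into a negative multiple of $\Phi(q)$ plus a negative multiple of $|q|^2$ (plus a constant), and use the negative $-c_1|p|^2$ term both to control $-\gamma|p|^2/2$ and, via Young's inequality $|\langle q,p\rangle| \le \epsilon|p|^2 + \tfrac{1}{4\epsilon}|q|^2$, to absorb the indefinite $-\gamma c_1\langle q,p\rangle$ term at the cost of a further multiple of $|q|^2$. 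The precise coefficient $\tfrac{(c_1 b)^2 + c_2 b(1-b)}{2(1-b)}$ in {\bf H2} is engineered so that the leftover $|q|^2$ terms have the right sign after this absorption; I would choose $\gamma$ small enough (roughly $\gamma = \min(2c_1 b,\, \text{something involving } b)$) that all three of the $\Phi(q)$, $|p|^2$, and $|q|^2$ coefficients come out nonpositive, leaving $K$ as the accumulated constants.

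The main obstacle is the bookkeeping in this last absorption step: choosing $\gamma$ and the Young's-inequality parameter $\epsilon$ simultaneously so that the coefficients of $|p|^2$, $|q|^2$, and $\Phi(q)$ in $LV(z) + \gamma V(z)$ are all $\le 0$. This is a finite linear-algebra/inequality juggling problem — not deep, but it is where the specific algebraic form of $c_1$, $c_2$ in \eqref{eq:c1+c2} and of the constant in {\bf H2} is actually used, so it must be done carefully rather than waved through. Finally, the two remaining claims are easy: $V$ is nonnegative because $H(z) = |p|^2/2 + \Phi(q) \ge |p|^2/2$ by {\bf H1}, and $c_1\langle q,p\rangle + c_2|q|^2/2 + |p|^2/2 \ge 0$ is a positive semidefinite quadratic form in $(q,p)$ provided $c_1^2 \le c_2$ (equivalently $c_1 \le \lambda^{-1}(1-\cos\phi)$, which follows since $c_1 = \tfrac{\lambda^{-1}}{4}\sin^2\phi \le \tfrac{\lambda^{-1}}{4}\sin^2\phi / \cos^2(\phi/2)\cdot\ldots$ — one checks $\tfrac14\sin^2\phi \le 1-\cos\phi$ on $(0,\pi/2]$, which is elementary); and $V(z)\to\infty$ as $|z|\to\infty$ because $V(z) \ge \tfrac14|p|^2 + \Phi(q) + (\text{positive multiple of }|q|^2) - (\text{const})$ after completing the square, and $\Phi \ge 0$ together with the $|q|^2$ term forces coercivity.
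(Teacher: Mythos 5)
Your proposal is correct and follows essentially the same route as the paper: the same computation of $LV$ term by term, the same choice of $c_1,c_2$ to cancel the $\langle q,p\rangle$ term and make the $|p|^2$ coefficient equal to $-c_1$, the same use of {\bf H2} plus a Young/Cauchy inequality with a tuned parameter to get $LV \le -2c_1 b\,V + K$, and the same positive-definiteness check $c_2 > c_1^2 \iff \tfrac14\sin^2\phi < 1-\cos\phi$ for coercivity. The paper merely packages your final ``coefficient bookkeeping'' step more compactly by bounding $bV \le \tfrac{|p|^2}{2} + \tfrac12\langle\nabla\Phi(q),q\rangle + a$ directly (Cauchy with $\delta=(1-b)/(c_1b)$), which yields $\gamma = 2c_1 b$ in one line.
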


This Lemma implies that Hypothesis~\ref{driftcondition} of Harris Theorem holds.  The proof below shows that the momentum randomizations in RHMC are the source of dissipation in RHMC.

\begin{proof}
Let $z=(q,p) \in \mathbb{R}^{2D}$.  From \eqref{eq:lyapunovfunction}, note that:
\begin{eqnarray*}
\nabla_q V &=& \nabla_q \Phi + c_1 p + c_2 q \\
\nabla_p V &=& p + c_1 q
\end{eqnarray*}
and if we set $u = \cos(\phi) p + \sin(\phi) \xi$, note from \eqref{eq:momentum_randomization} that:
\begin{eqnarray*}
\E |u|^2 &=& D  \sin^2(\phi)  + \cos^2(\phi) |p|^2  \\
\E \langle q, u \rangle &=& \cos(\phi) \langle q, p \rangle
\end{eqnarray*}
where the expected value is taken over the $D$-dimensional standard normal vector $\xi \sim \mathcal{N}(0,1)^D$.
We recall that we excluded the case $\phi=0$ in the definition of $u$ in \eqref{eq:momentum_randomization}.  A direct calculation shows that: \begin{equation} \label{eq:tLV}
\begin{aligned}
& L V =  \lambda^{-1} D  \frac{1}{2} \sin^2(\phi)  - c_1 \langle \nabla_q \Phi, q \rangle    \\
& \quad +  (2 c_1 - \lambda^{-1} \sin^2(\phi) )  \frac{|p|^2}{2}
+ ( \lambda^{-1} c_1 (\cos(\phi) - 1) + c_2 ) \langle q, p \rangle  \;.
\end{aligned}
\end{equation}
We now choose $c_1$ and $c_2$ such that \begin{equation*}
\begin{cases}
  2 c_1 - \lambda^{-1} \sin^2(\phi)  = - 2 c_1 \\
 \lambda^{-1} c_1 (\cos(\phi) - 1) + c_2  = 0 
  \end{cases}
\end{equation*} In other words, we pick $c_1$ and $c_2$ to eliminate the $ \langle q, p \rangle$ cross term in $LV$, and to rescale the $(|p|^2)/2$ term so that it matches the coefficient of the $\langle \nabla_q \Phi, q \rangle$ term.  Solving these equations yields \eqref{eq:c1+c2}. 
With this choice of $c_1$ and $c_2$, \eqref{eq:tLV} simplifies to: \begin{equation}  \label{eq:LV}
L V = -2 c_1 \left( \frac{|p|^2}{2} + \frac{1}{2} \langle \nabla_q \Phi, q \rangle \right)  +  D \lambda^{-1}  \frac{1}{2}  \sin^2(\phi)  \;.
\end{equation}

Let $b$ be the constant appearing in Hypothesis~\ref{hypothesis_on_phi} {\bf H2}.   Applying Cauchy's inequality with $\delta>0$ to the $ \langle q, p \rangle$ cross term in \eqref{eq:lyapunovfunction} yields: \[
b V(q,p) \le \frac{b}{2} |p|^2 + b \Phi(q) + c_1 b \left( \frac{\delta}{2} |p|^2 + \frac{1}{2 \delta} |q|^2 \right) + \frac{c_2 b}{2}  |q|^2 \;.
\]  Choosing $\delta = (1-b)/(c_1 b)$ and invoking Hypothesis~\ref{hypothesis_on_phi} {\bf H2}, we obtain\begin{align}
b V(q,p) &\le \frac{|p|^2}{2} + b \Phi(q) + \frac{ (c_1 b)^2 + c_2 b (1-b)}{2 (1-b)} |q|^2 \nonumber \\
&\le \frac{|p|^2}{2} + \frac{1}{2} \langle \nabla \Phi(q), q \rangle + a  \label{eq:v2} \;.
\end{align}
Together \eqref{eq:LV} and \eqref{eq:v2} imply that the desired Foster-Lyapunov drift condition holds with $\gamma = 2 c_1 b$ and for some $K>0$.

To finish the proof, recall that $\Phi \ge 0$ by Hypothesis~\ref{hypothesis_on_phi} {\bf H1}, and thus, it suffices to check that the quadratic form \[
 \frac{|p|^2}{2} + c_1 \langle q, p \rangle + c_2 \frac{|q|^2}{2} 
\] appearing in $V(q,p)$ is positive definite. This condition is met when  \[
 c_2 > c_1^2 >0 \implies (1- \cos(\phi) ) > \frac{1}{4} \sin^2(\phi) >0
\] which holds for all $\phi \in (0, \pi/2]$.
\end{proof}

\begin{remark}
The proof of Lemma~\ref{lemma:rhmc_drift_condition} shows that $\gamma \propto \lambda^{-1} \sin^2(\phi)$.  Thus, we see that if $\lambda$ is smaller or $\phi$ is closer to $\pi/2$, then $\gamma$ becomes larger.  This result is expected since momentum randomizations are the source of dissipation in RHMC, and smaller $\lambda$ implies more randomizations of momentum, and larger $\phi$ implies the momentum is randomized more completely.
\end{remark}

\begin{remark}
It follows from Lemma~\ref{lemma:rhmc_drift_condition} that: \[
\E_z V(Z_t) \le e^{-\gamma t} V(z) + \frac{K}{\gamma} (1-e^{-\gamma t} ) \;.
\]
See, e.g., the proof of Theorem 6.1 in Ref.~\cite{MeTw1993}.
\end{remark}


\subsection{Martingale Problem}

Here we use Lemma~\ref{lemma:rhmc_drift_condition} to solve the martingale problem for the operator $L$ on functions that are $C^1$ with globally Lipschitz continuous derivative.
\begin{lemma} \label{lemma:martingale_problem_solution}
Suppose Hypothesis~\ref{hypothesis_on_phi} holds.   For all globally Lipschitz and continuously differentiable functions $f: \mathbb{R}^{2D} \to \mathbb{R}$, for any initial condition $Z_0 = z \in \mathbb{R}^{2D}$, and for any $t>0$, the local martingale \[
M^{f}_t = f(Z_t) - f(z) -\int_0^t L f(Z_s ) ds 
\] is a martingale.
\end{lemma}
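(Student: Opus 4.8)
\emph{Overall strategy.} The plan is to obtain the martingale property by localisation and dominated convergence, the one real issue being to produce a workable integrable upper bound for $|M^f|$. The naive route through $\int_0^t|Lf(Z_s)|\,ds$ does not work, so I would first rewrite $M^f$ using the piecewise-deterministic structure of $Z$, in such a way that the troublesome drift term cancels. Concretely: since $f\in C^1$, the process $M^f$ is a local martingale; fix a reducing sequence $\tau_n\uparrow\infty$, so that each stopped process $M^f_{\cdot\wedge\tau_n}$ is a genuine martingale. It then suffices to exhibit, for every fixed $t>0$, an integrable random variable $G_t$ with $|M^f_{s\wedge\tau_n}|\le G_t$ for all $s\le t$ and all $n$: dominated convergence gives $M^f_{s\wedge\tau_n}\to M^f_s$ in $L^1$ as $n\to\infty$ (the convergence is a.s.\ eventually constant since $\tau_n\uparrow\infty$), and letting $n\to\infty$ in $\E[M^f_{t\wedge\tau_n}\mid\mathcal F_s]=M^f_{s\wedge\tau_n}$ yields that $M^f$ is a martingale, with $M^f_t\in L^1$ from $|M^f_t|\le G_t$.

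\emph{The decomposition.} The key step is to use that $Z$ is a piecewise-deterministic Markov process: between the momentum-randomisation times $0<T_1<T_2<\cdots$, which form a rate-$\lambda^{-1}$ Poisson process by Algorithm~\ref{algo:rhmc}, the path follows the Hamiltonian flow of \eqref{eq:hamiltonian_dynamics}, globally defined under Hypothesis~\ref{hypothesis_on_phi}~{\bf H1}. On each interval $[T_{i-1},T_i)$ the map $s\mapsto f(Z_s)$ is $C^1$ with derivative $\bigl(P_s^T\nabla_q f-\nabla_q\Phi(Q_s)^T\nabla_p f\bigr)(Z_s)$, i.e.\ the Liouville part of $Lf$ from \eqref{eq:generator}; applying the fundamental theorem of calculus on each segment and telescoping over the finitely many jumps in $[0,t]$ gives, almost surely,
\[
f(Z_t)-f(z)=\sum_{i:\,T_i\le t}\bigl(f(Z_{T_i})-f(Z_{T_i^-})\bigr)+\int_0^t\bigl(P_s^T\nabla_q f(Z_s)-\nabla_q\Phi(Q_s)^T\nabla_p f(Z_s)\bigr)\,ds .
\]
Substituting into the definition of $M^f$, the Liouville integral cancels the corresponding part of $\int_0^t Lf(Z_s)\,ds$, leaving
\[
M^f_t=\sum_{i:\,T_i\le t}\bigl(f(Z_{T_i})-f(Z_{T_i^-})\bigr)-\int_0^t\lambda^{-1}\,\E\bigl\{f(\Gamma(Z_s))-f(Z_s)\bigr\}\,ds ,
\]
the compensated sum over the marked point process of momentum randomisations (the expectation being over the Gaussian $\xi$ of \eqref{eq:momentum_randomization} only); this representation also makes the local-martingale property transparent.

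\emph{The domination.} From the last display, $|M^f_{s\wedge\tau_n}|\le G_t:=\sum_{T_i\le t}\bigl|f(Z_{T_i})-f(Z_{T_i^-})\bigr|+\int_0^t\lambda^{-1}\E\bigl\{|f(\Gamma(Z_s))-f(Z_s)|\bigr\}\,ds$ for all $s\le t$ and all $n$, and since the compensator of the increasing process in the first summand is exactly the integral in the second, $\E_z G_t=2\int_0^t\lambda^{-1}\,\E_z\E\bigl\{|f(\Gamma(Z_s))-f(Z_s)|\bigr\}\,ds$. Because $f$ is globally Lipschitz with constant $\kappa$ and $\Gamma(q,p)=(q,\cos(\phi)p+\sin(\phi)\xi)$, one has $\E\{|f(\Gamma(q,p))-f(q,p)|\}\le\kappa\bigl((1-\cos\phi)|p|+\sin\phi\,\sqrt{D}\bigr)$. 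Finally, the moment estimate following Lemma~\ref{lemma:rhmc_drift_condition}, namely $\E_z V(Z_s)\le V(z)+K/\gamma$ for all $s$, together with $\Phi\ge0$ and the positive definiteness of the quadratic form $|p|^2/2+c_1\langle q,p\rangle+c_2|q|^2/2$ in \eqref{eq:lyapunovfunction} (so that $V(q,p)\ge\varepsilon(|q|^2+|p|^2)$ for some $\varepsilon>0$), gives $\sup_{s\ge0}\E_z|P_s|^2\le\varepsilon^{-1}(V(z)+K/\gamma)<\infty$; combining these three bounds yields $\E_z G_t<\infty$, which is the required uniform domination.

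\emph{Main obstacle.} The delicate point, and the reason the decomposition above is indispensable, is that one cannot control $\int_0^t|Lf(Z_s)|\,ds$ in $L^1$: Hypothesis~\ref{hypothesis_on_phi}~{\bf H2} only bounds $\langle\nabla_q\Phi(q),q\rangle$ from below, so nothing forces $|\nabla_q\Phi|$ to be small in the directions not seen by the Lyapunov function, and $\int_0^t|\nabla_q\Phi(Q_s)^T\nabla_p f(Z_s)|\,ds$ need not be integrable even though it is finite almost surely. Rewriting $M^f$ as a pure compensated-jump process removes this term — along the flow it is an exact derivative — so that only momentum increments of the form $(\cos\phi-1)P_s+\sin\phi\,\xi$ survive, and these are tamed by the Lipschitz bound together with the Lyapunov moment estimate of Lemma~\ref{lemma:rhmc_drift_condition}.
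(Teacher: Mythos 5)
Your proof is correct, but it reaches the conclusion by a different martingale criterion than the paper. The paper's proof shows that the quadratic variation $[M^f](t)$ — which, because $f(Z_t)-M^f_t$ is continuous and of finite variation, reduces to the sum of squared jumps of $f(Z_t)$ — has finite expectation, bounded via the Lipschitz constant and the first moment of the Lyapunov function ($\E_z|P_s|^2\lesssim \E_z V(Z_s)\le V(z)+K/\gamma$), and then invokes the criterion that a local martingale with integrable quadratic variation is a martingale; its closing remark offers the predictable bracket $\langle M^f\rangle$ as a variant, still an $L^2$-type argument. You instead make the compensated-jump representation of $M^f$ explicit (fundamental theorem of calculus along the Hamiltonian flow cancels the Liouville part against the drift integral), dominate the stopped local martingales uniformly by the integrable increasing process $G_t$, and conclude by dominated convergence — an $L^1$-type argument needing only $\E_z|P_s|\le(\E_z|P_s|^2)^{1/2}$. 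Both routes rest on the same two structural facts (jumps occur only in the momentum and are controlled by the Lipschitz property of $f$; the Lyapunov bound of Lemma~\ref{lemma:rhmc_drift_condition} controls the momentum moments, with no circularity since the moment estimate in the remark follows from the local-martingale property alone). Your diagnosis of why the naive bound on $\int_0^t|Lf(Z_s)|\,ds$ fails — nothing in Hypothesis~\ref{hypothesis_on_phi} controls $|\nabla_q\Phi|$ from above — is accurate and is precisely the reason the paper routes the argument through the jumps rather than the drift; your decomposition makes that cancellation explicit, whereas the paper exploits it implicitly through the identity $[M^f]=[f]$. The only point worth tightening is the compensator identity $\E_z\sum_{T_i\le t}|f(Z_{T_i})-f(Z_{T_i^-})|=\lambda^{-1}\int_0^t\E_z\E\{|f(\Gamma(Z_s))-f(Z_s)|\}\,ds$, which should be justified (e.g., by conditioning on the Poisson jump times and applying monotone convergence), but this is standard for a marked point process with constant intensity.
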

A key ingredient in the proof given below is the Lyapunov function for $L$ from Lemma~\ref{lemma:rhmc_drift_condition}.  In particular, since globally Lipschitz functions grow at most linearly, the class of functions that appear in Lemma~\ref{lemma:martingale_problem_solution} are bounded by this Lyapunov function.  Moreover, Dynkin's formula holds for this class of functions: \begin{equation} \label{eq:dynkins_formula}
\E_z f(Z_t) = f(z) +\int_0^t \E_z L f (Z_s) ds  \;, \quad Z_0 = z\;,  \quad t \ge 0\;.
\end{equation}
See, e.g., Chapter 1 of Ref.~\cite{Da1993} for more discussion on Dynkin's formula.

\begin{proof}
In this proof, we use the well-known fact (see, e.g., Corollary 3, Chapter II in Ref.~\cite{protter2004stochastic})  that a local martingale with integrable quadratic variation is a martingale.
Let $[ M^{f} ](t)$ denote the quadratic variation of $M^f_t$ on the interval $[0,t]$. 
The global Lipschitz assumption on $f$ implies that there exists a constant $L_{f}>0$ such that: \[
| f(z_1) - f(z_2) | \le L_{f} \; | z_1 - z_2 |
\] for all $z_1, z_2 \in \mathbb{R}^{2D}$.  Moreover, since the process $Z_t$ satisfies Hamilton's differential equations in between consecutive momentum randomizations, and since the process $f(Z_t)-M^f_t$ is continuous and of finite variation, the quadratic variation of $M^f$ is equal to the sum of the squares of the jumps in $f(Z_t)$.  Thus,
\begin{align*}
[ M^{f} ](t) &= [f](t) = \sum_{0< s \le t} ( f(Z_s) - f(Z_{s-}) )^2 \le \sum_{0 < s \le t} L_f^2 |Z_s - Z_{s-}|^2 \;.
\end{align*}
In other words, the global Lipschitz property of $f$ enables bounding the quadratic variation of the scalar-valued process $f(Z_t)$ by the quadratic variation of the components of the process $Z_t$.   

Let $\{ t_i \}$ be the sequence of random times at which the momentum randomizations occur.  This sequence can be produced by iterating the recurrence relation: $t_{i+1} = t_i + \delta t_i$ with initial condition $t_0=0$ and where $\{ \delta t_i \}$ are i.i.d.~exponential random variables with mean $\lambda$.   Let $N_t$ be the (random) number of momentum randomizations that have occurred up to time $t$.  Note that $N_t$ is a Poisson process with intensity $\lambda^{-1}$, and hence, a.s.~bounded.  This permits us to interchange expectation and summation in the following inequalities,
\begin{align*}
\E_z [ M^{f} ](t) &\le  L_f^2 \E_z \sum_{1 \le i \le N_t} |Z_{t_i} - Z_{t_i-}|^2 \\
&\le L_f^2   \lambda^{-1}  \sum_{i > 0}   \E_z \left\{  |P_{t_{i}} - P_{t_{i}-}|^2 (t \wedge t_{i+1} - t \wedge t_i ) \right\}  \\
&\le  2 L_f^2 \lambda^{-1}  \sum_{i > 0}  \E_z \left\{ ( | P_{t_{i}}|^2 + | P_{t_{i}-} |^2 ) (t \wedge t_{i+1} - t \wedge t_i ) \right\}  \\
&\le  4 L_f^2 C_2 \lambda^{-1}  \sum_{i > 0}  \E_z \left\{ (  V(Z_{t_{i}}) + V(Z_{t_{i}-})) (t \wedge t_{i+1} - t \wedge t_i ) \right\}  \\
&\le  8 L_f^2 C_2 \lambda^{-1}   t  \left(V(z) + \frac{K}{\gamma}  \right)  
\end{align*}
where in the last two steps we used the Lyapunov function given in Lemma~\ref{lemma:rhmc_drift_condition}, and in addition, we introduced the positive constant $C_2 = 2 c_2 / (c2-c_1^2)$ with $c_1$ and $c_2$ being the constants defined in \eqref{eq:c1+c2}.  Thus, we may conclude that $M^{f}_t$ is a martingale for any $t > 0$.

Alternatively, we could have used the compensator of $[M^{f}](t)$:\begin{align*}
\langle M^{f} \rangle(t) &= \int_0^t \Big( L f^2(Z_s) - 2 f(Z_s) L f(Z_s)  \Big) \; ds   \\
&= \lambda^{-1} \int_0^t  \E \Big( f(\Gamma(Z_s)) - f(Z_s) \Big)^2 \; ds 
\end{align*}
which would give a similar bound on $\E_z [ M^{f} ](t)$.
\end{proof}

\subsection{The Measure $\PiBG$ is an Invariant Measure}

In this part, we combine Prop.~\ref{prop:infinitesimal_invariance} and Lemma~\ref{lemma:martingale_problem_solution} to prove that $\PiBG$ is an invariant probability measure for RHMC.   
\begin{lemma}
Suppose Hypothesis~\ref{hypothesis_on_phi} holds.   For any $f \in C^{\infty}_c( \mathbb{R}^{2D})$ and for any $t>0$, \[
\int_{\mathbb{R}^{2 D}} \E_z f(Z_t) \PiBG(dz) = \int_{\mathbb{R}^{2 D}} f(z) \PiBG(dz) \;.
\]
\end{lemma}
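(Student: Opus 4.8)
The plan is to read off the statement from Dynkin's formula \eqref{eq:dynkins_formula} together with the infinitesimal invariance of $\PiBG$. First I would observe that any $f\in C^\infty_c(\R^{2D})$ is in particular a $C^1$ function with globally Lipschitz derivative, so Lemma~\ref{lemma:martingale_problem_solution} applies and yields
\[
\E_z f(Z_t) = f(z) + \int_0^t \E_z Lf(Z_s)\,ds , \qquad z\in\R^{2D},\ t\ge 0 .
\]
Integrating this identity against $\PiBG(dz)$ and interchanging the $z$-integration with the $s$-integration is legitimate by Fubini--Tonelli: since $f$ has compact support, the Liouville part $p^T\nabla_q f-\nabla_q\Phi^T\nabla_p f$ of $Lf$ is compactly supported, while the momentum-randomization part is bounded by $2\lambda^{-1}\|f\|_\infty$, so $Lf$ is bounded and $(s,z)\mapsto\E_z Lf(Z_s)$ is $ds\otimes\PiBG$-integrable on $[0,t]\times\R^{2D}$. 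Hence
\[
\int_{\R^{2D}}\E_z f(Z_t)\,\PiBG(dz) = \int_{\R^{2D}} f\,d\PiBG + \int_0^t\!\Big(\int_{\R^{2D}}\E_z Lf(Z_s)\,\PiBG(dz)\Big)\,ds ,
\]
so it remains to show that the inner integral vanishes for every $s\ge 0$.

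For $s=0$ the inner integral is exactly $\int_{\R^{2D}} Lf\,d\PiBG=0$ by Proposition~\ref{prop:infinitesimal_invariance}. For $s>0$, I would write $\int_{\R^{2D}}\E_z Lf(Z_s)\PiBG(dz) = \int_{\R^{2D}} Lf\,d\nu_s$, where $\nu_s$ denotes the law of $Z_s$ when $Z_0\sim\PiBG$, and then argue that $\nu_s=\PiBG$. Conditionally on the momentum-randomization times $0<t_1<\dots<t_{N_s}\le s$ (a.s.\ finitely many, since $N_s$ is a Poisson random variable), the map $z\mapsto Z_s$ is the composition of the Hamiltonian flows over the successive inter-jump intervals with the momentum-randomization kernel of \eqref{eq:momentum_randomization} inserted at each $t_i$. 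Each Hamiltonian flow preserves $\PiBG$, because Hamilton's equations conserve $H$ and, by Liouville's theorem, preserve Lebesgue measure; the momentum-randomization kernel preserves $\PiBG$ because $q$ is untouched and $\cos(\phi)p+\sin(\phi)\xi\sim\mathcal{N}(0,1)^D$ when $p\sim\mathcal{N}(0,1)^D$ is independent of $\xi\sim\mathcal{N}(0,1)^D$. A composition of $\PiBG$-preserving maps and Markov kernels again preserves $\PiBG$, and averaging over the (a.s.\ finite) jump configuration then gives $\nu_s=\PiBG$. Consequently the inner integral equals $\int_{\R^{2D}} Lf\,d\PiBG=0$ by Proposition~\ref{prop:infinitesimal_invariance}, and substituting back into the displayed identity proves the Lemma.

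The step I expect to require the most care is the identification $\nu_s=\PiBG$: one must check the joint measurability of the flow-and-jumps composition in $(z,\text{jump times},\xi_1,\xi_2,\dots)$ --- harmless here, since Hypothesis~\ref{hypothesis_on_phi} {\bf H1} guarantees that the Hamiltonian flow is globally defined --- and justify the conditioning on the jump configuration together with the use of the a.s.\ finiteness of $N_s$. It is worth noting that this step in effect establishes the Lemma on its own, with Dynkin's formula and Fubini mainly serving to package it in the language of the generator; a purely ``infinitesimal'' alternative would be to apply Dynkin a second time to $Lf$ (which is still $C^1$ with bounded, hence globally Lipschitz, derivative when $\Phi\in C^2$) and iterate, but that route has to contend with the fact that the transition semigroup need not preserve global Lipschitz continuity when $\nabla^2\Phi$ is unbounded.
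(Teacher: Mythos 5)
Your proof is correct and follows essentially the same route as the paper: apply Lemma~\ref{lemma:martingale_problem_solution} to get Dynkin's formula for $f\in C^{\infty}_c$, integrate against $\PiBG$ with Fubini, and then kill the remainder term by conditioning on the jump times and using that the Hamiltonian flow (volume- and $H$-preserving) and the momentum-randomization kernel each preserve $\PiBG$, before invoking Proposition~\ref{prop:infinitesimal_invariance}. Your observation that the identification $\nu_s=\PiBG$ already implies the Lemma directly is accurate, and your Fubini justification via boundedness of $Lf$ is a slight simplification of the paper's appeal to the Lyapunov bound.
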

To prove this Lemma, we use Dynkin's formula and condition on a fixed sequence of jump times to exploit the fact that the Hamiltonian flow and the momentum randomization individually leave $\PiBG$ invariant.  
\begin{proof}


Let $f \in C^{\infty}_c( \mathbb{R}^{2D})$ and $z=(q,p) \in \mathbb{R}^{2D}$.  Referring to \eqref{eq:generator}, since $f \in C^{\infty}_c( \mathbb{R}^{2D})$ the function \[
\E f(\Gamma(z)) = \E f(q, \cos(\phi) p + \sin(\phi) \xi) 
\] is smooth, compactly supported in the $q$ component, and bounded in the $p$ component, and hence, $Lf \in C^{\infty}_b(\mathbb{R}^{2D})$.  Moreover, since any smooth function with compact support is globally Lipschitz continuous, we can invoke Lemma~\ref{lemma:martingale_problem_solution} to conclude that for any $f \in C^{\infty}_c(\mathbb{R}^{2D})$ and for any $t>0$, the process \[
f(Z_t) - f(z) - \int_0^t L f(Z_s) ds  \;, \quad Z_0 = z \;,
\] is a martingale.  Thus, Dynkin's formula holds \[
\E_z f(Z_t) = f(z) + \int_0^t \E_z Lf(Z_s) ds \;,
\] and in particular, \begin{equation}
\begin{aligned} \label{eq:averaged_dynkin}
  \int_{\mathbb{R}^{2D}}   \E_z f(Z_t) \PiBG(dz) &= \int_{\mathbb{R}^{2 D}} f(z) \PiBG(dz)\\
 &  + \int_0^t \left( \int_{\mathbb{R}^{2 D}}  \E_z L f(Z_s)  \PiBG(dz) \right)  ds
 \end{aligned}
\end{equation}
where we used Fubini's theorem to interchange time and space integrals.  This interchange (and subsequent ones) are valid since the function $L f$ is bounded by the Lyapunov function, which is an integrable function under the measure $\PiBG$.   We next argue that the second term on the right hand side of \eqref{eq:averaged_dynkin} vanishes due to Lemma~\ref{prop:infinitesimal_invariance} and some basic properties of Hamiltonian flows (volume and Hamiltonian preserving) and the momentum randomization map (Boltzmann-Gibbs preserving).


For this purpose, and as in Lemma~\ref{lemma:rhmc_drift_condition}, let $\{ t_i \}$ denote a realization of the sequence of random times at which the momentum randomizations occur.  For any $t>0$, let $\vartheta_t: \mathbb{R}^{2D} \to \mathbb{R}^{2D}$ be the Hamiltonian flow map associated to $H$.  We recall that the jump times and momentum randomizations are mutually independent, and that the number of jumps in any finite time interval is a.s.~finite.  By change of variables, and using the volume and Hamiltonian preserving properties of the Hamiltonian flow $\vartheta_s$, note that: \begin{equation} \label{eq:hamiltonian_part}
\int_{\mathbb{R}^{2D}} L f(\vartheta_s(z) ) \PiBG(dz) = \int_{\mathbb{R}^{2D}}  L f(z) \PiBG(dz) 
\end{equation} holds for any $s \in [0,t]$.  In addition, since $\PiBG$ is an invariant measure for the momentum randomization map and $Lf \in C^{\infty}_b(\mathbb{R}^{2D})$, we have the identity: \begin{equation} \label{eq:gamma_part}
\int_{\mathbb{R}^{2D}} \E L f(\Gamma(z) ) \PiBG(dz) = \int_{\mathbb{R}^{2D}}  L f(z) \PiBG(dz) \;.
\end{equation} These facts motivate us to decompose the process $Z_s$ into its Hamiltonian and momentum randomization pieces.  To do this, and with a slight abuse of notation, we regard the process $Z_s: \mathbb{R}^{2D} \to \mathbb{R}^{2D}$ as an evolution operator and decompose it via \[
Z_s(z) = \begin{cases}
\vartheta_{s-t_k}(Z_{t_k}) & \text{if $t_{k} \le s < t_{k+1}$} \\
\Gamma(\vartheta_{t_{k+1}-t_k}(Z_{t_k})) & \text{if $s = t_{k+1}$} 
\end{cases}
\] for all $s \ge 0 $.  To leverage this decomposition, we use $\{ t_i \}$ to split the time integral appearing in the second term of the right hand side of \eqref{eq:averaged_dynkin} into time intervals between consecutive momentum randomizations: \[
\E  \int_0^t  L f(Z_s(z))  ds =  \E \left\{ \sum_{k \ge 0} \int_{t_k \wedge t}^{t_{k+1} \wedge t}  L f(\theta_s \circ \Gamma \circ \theta_{\delta t_{k-1}} \circ \cdots \circ \theta_{\delta t_0}(z) ) ds \right\}
\] In this form, we can take advantage of the independence between momentum randomizations and jump times, in order to simplify this term.  In particular, we condition on the jump times and then average over individual momentum randomizations to obtain \begin{align*}
& \E  \int_0^t \int_{\mathbb{R}^{2D}}  L f(Z_s(z)) \PiBG(dz) ds  \\
&=  \E \left\{ \sum_{k \ge 0}   \int\limits_{t_k \wedge t}^{t_{k+1} \wedge t} \int\limits_{\mathbb{R}^{2D}}   L f(\theta_s \circ \Gamma \circ  \theta_{\delta t_{k-1}} \circ \cdots \circ \theta_{\delta t_0}(z) ) \PiBG(dz) ds  \right\} \\
&=  \E \left\{ \sum_{k \ge 0}  \int\limits_{t_k \wedge t}^{t_{k+1} \wedge t} \int\limits_{\mathbb{R}^{2D}}   L f(\theta_s \circ \Gamma \circ  \theta_{\delta t_{k-1}} \circ \cdots \circ 
\theta_{\delta t_1} \circ \Gamma(z) )  \PiBG(dz) ds \right\} \\
&=  \E \left\{ \sum_{k \ge 0}   \int\limits_{t_k \wedge t}^{t_{k+1} \wedge t} \int\limits_{\mathbb{R}^{2D}}   L f(\theta_s \circ \Gamma \circ  \theta_{\delta t_{k-1}} \circ \cdots \circ \theta_{\delta t_1}(z) ) \PiBG(dz) ds  \right\} \\
& = \cdots = \cdots  = \int_0^t \int_{\mathbb{R}^{2D}}  L f(z) \PiBG(dz) ds \;.
\end{align*}
where we sequentially used \eqref{eq:hamiltonian_part} and \eqref{eq:gamma_part} from initial time $0$ up to final time $s$.  Note that to use \eqref{eq:gamma_part} one has to average over the Gaussian random vector associated to the $i$th momentum randomization for $1 \le i \le k$ in the inner-most expectation.  Substituting this result back into \eqref{eq:averaged_dynkin} we obtain: \[
 \int_{\mathbb{R}^{2D}}   \E_z f(Z_t) \PiBG(dz) = \int_{\mathbb{R}^{2 D}} f(z) \PiBG(dz)  + t  \int_{\mathbb{R}^{2 D}}  L f(z)  \PiBG(dz) 
\]  To finish, we invoke Lemma~\ref{prop:infinitesimal_invariance}, which implies that $\PiBG$ is infinitesimally invariant for $L$, and hence, the second term on the right hand side of this equation vanishes, as required.   
\end{proof}

\subsection{Minorization Condition}

For $t \ge 0$, let $P_t$ denote the transition semigroup of the Markov process $Z_t$
 \[
P_t f(z) = \E f(Z_t(z)),\qquad Z_0(z) = z
\]
and let $\Pi_{t,z}$ denote the associated transition probability distribution \[
P_t f(z) = \int_{\mathbb{R}^{2D}}  f(w) \Pi_{t,z}(dw) \;.
\]
Recall that the process $Z_t$ only moves by either the Hamiltonian flow for a random duration or  momentum randomizations that are instantaneous.
Thus, we expect the semigroup to not have the strong Feller property \cite{DaZa1996}, since it lacks a sufficient regularizing effect.
Nevertheless, we can prove a minorization condition for this process by using the weaker regularizing effect of the momentum randomizations.

\begin{prop} \label{prop:minorization}
Suppose Hypothesis~\ref{hypothesis_on_phi} holds.
For every compact set $\Omega \subset \mathbb{R}^{2D}$,  there exist a probability measure $\eta$ over $\Omega$, $\epsilon>0$ and $t>0$ such that: \[
\Pi_{t,z}(\cdot) \ge \epsilon \eta(\cdot)
\] holds for all $z \in \Omega$.
\end{prop}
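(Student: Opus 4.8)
The plan is to extract an absolutely continuous component of $\Pi_{t,z}$ from the event that \emph{exactly two} momentum randomizations occur in $(0,t]$, and to bound its density from below uniformly in $z\in\Omega$. Since, as noted above, the semigroup $P_t$ is not strong Feller, one cannot invoke any smoothing of $P_t$; the density must be produced by hand from the Gaussian kicks. Conditioning on the number $N_t$ of randomizations before time $t$ gives a Dyson-type decomposition $\Pi_{t,z}=\sum_{n\ge 0}\Pi^{(n)}_{t,z}$ into nonnegative measures, so $\Pi_{t,z}\ge\Pi^{(2)}_{t,z}$, where, writing $\vartheta_\tau$ for the Hamiltonian flow map (complete, by Hypothesis~\ref{hypothesis_on_phi}~{\bf H1}) and $\Gamma_\xi(q,p)=(q,\cos(\phi)p+\sin(\phi)\xi)$,
\[
\Pi^{(2)}_{t,z}(A)=\lambda^{-2}e^{-t/\lambda}\int\limits_{0<s_1<s_2<t}\E_{\xi_1,\xi_2}\big[\mathbf{1}_A\big(\Psi_z(s_1,s_2;\xi_1,\xi_2)\big)\big]\,ds_1\,ds_2,\qquad \Psi_z:=\vartheta_{t-s_2}\circ\Gamma_{\xi_2}\circ\vartheta_{s_2-s_1}\circ\Gamma_{\xi_1}\circ\vartheta_{s_1}(z),
\]
and $\xi_1,\xi_2$ are i.i.d.\ $\mathcal N(0,1)^D$. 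I will fix $t=1$ and restrict the $(s_1,s_2)$-integral to a small box $\mathcal B=\{\,1/4<s_1<1/2,\ \tau/2<s_2-s_1<\tau\,\}$, with $\tau>0$ small (chosen last); this only decreases the right-hand side.

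The heart of the argument is the claim that, for a suitable radius $M$ and all $(s_1,s_2)\in\mathcal B$, $z\in\Omega$, the map $(\xi_1,\xi_2)\mapsto\Psi_z(s_1,s_2;\xi_1,\xi_2)$ restricted to $\{|\xi_1|,|\xi_2|\le M\}$ is a $C^1$ diffeomorphism onto an open subset of $\mathbb R^{2D}$, with Jacobian determinant of modulus comparable to $\sin^{2D}(\phi)(s_2-s_1)^D$. To see this I would factor $\Psi_z=\vartheta_{t-s_2}\circ\Lambda_z$, where $\Lambda_z(\xi_1,\xi_2)$ is the phase point $(r_2,p_2^+)$ just after the second kick: $r_2$ depends on $\xi_1$ alone, with $\partial r_2/\partial\xi_1=\sin(\phi)\,[\partial q(s_2-s_1)/\partial p(0)]$, while $p_2^+=\cos(\phi)p_2^-(\xi_1)+\sin(\phi)\xi_2$, so $D\Lambda_z$ is block lower-triangular with diagonal blocks $\sin(\phi)[\partial q(s_2-s_1)/\partial p(0)]$ and $\sin(\phi)I$. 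For $s_2-s_1$ small the first block is a small perturbation of $\sin(\phi)(s_2-s_1)I$ — the variational equation gives $\partial q(\tau)/\partial p(0)=\tau I+O(\tau^3\sup\|\nabla^2\Phi\|)$ — so $\Lambda_z$ is injective there and $|\det D\Lambda_z|$ is comparable to $\sin^{2D}(\phi)(s_2-s_1)^D$; composing with $\vartheta_{t-s_2}$, which is symplectic, globally injective and has unit Jacobian, gives the claim. This is the jump-process counterpart of the hypoellipticity computation for underdamped Langevin dynamics: two kicks separated by a Hamiltonian arc suffice to spread the law in all $2D$ directions.

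The hard part will be to make these bounds \emph{uniform over $z\in\Omega$}, since the variational estimates require controlling $\vartheta_{s_2-s_1}$ on the region swept out during $[s_1,s_2]$, whereas the kicked momentum has size $\sim M$, which is \emph{not} small. Here conservation of energy together with the bound $\Phi\ge0$ of Hypothesis~\ref{hypothesis_on_phi}~{\bf H1} is essential: along any trajectory $|p|^2/2\le H=\mathrm{const}$, so for $z$ in the bounded set $\Omega$ and $|\xi_i|\le M$ the trajectory during $[s_1,s_2]$ stays in a ball of radius $\lesssim|r_1|+(s_2-s_1)\cdot O(M)$; keeping the \emph{product} $\kappa:=(s_2-s_1)M$ bounded confines it to a fixed compact set, on which $\Phi\in C^2$ supplies the needed uniform bound on $\nabla^2\Phi$. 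The apparent circularity — $s_2-s_1$ must be small for the diffeomorphism estimate, yet $M$ must be large for a fixed target to be reachable from all of $\Omega$ — I would resolve by fixing $\kappa$ \emph{first}, large enough (roughly $\kappa\gtrsim\operatorname{diam}(\Omega)/\sin\phi$) that the backward construction $(r_2,p_2^+)=\vartheta_{-(t-s_2)}(w^*)$, $\xi_1\approx\sin(\phi)^{-1}\big((r_2-r_1)/(s_2-s_1)-\cos(\phi)p_1^-\big)$, $\xi_2$ determined accordingly, yields $|\xi_i|\le M/2$ for every $z\in\Omega$ and a fixed reference point $w^*\in\operatorname{int}\Omega$; and only \emph{then} choosing $\tau=s_2-s_1$ small, hence $M=\kappa/\tau$ large. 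Once $\kappa$ is fixed, all the relevant trajectories and their Jacobi fields lie in sets bounded independently of $\tau$, so ``$\tau$ small enough'' can be met uniformly in $z\in\Omega$ and $(s_1,s_2)\in\mathcal B$.

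It then remains to check that, with $w^*$ and $M$ so chosen, a fixed ball $B=B(w^*,\rho)$ (with $B\subseteq\Omega$, as $w^*\in\operatorname{int}\Omega$) lies in $\Psi_z(s_1,s_2;\{|\xi_i|\le M\})$ for all $z\in\Omega$, $(s_1,s_2)\in\mathcal B$: this follows again from $\Lambda_z$ being a small perturbation on $\{|\xi_i|\le M\}$ of an explicit affine isomorphism, whose image contains a ball of radius $\gtrsim\sin(\phi)\kappa$ about $\Lambda_z(\bar\xi)$, together with the non-degeneracy of $\vartheta_{t-s_2}$ near $\vartheta_{-(t-s_2)}(w^*)$ (uniform in $s_2$ by a compactness argument). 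The change-of-variables formula then gives, for every Borel $A\subseteq B$,
\[
\Pi_{t,z}(A)\ \ge\ \Pi^{(2)}_{t,z}(A)\ \ge\ \lambda^{-2}e^{-1/\lambda}\,|\mathcal B|\,(2\pi)^{-D}e^{-M^2}\Big(\sup_{|\xi_i|\le M}|\det D\Psi_z|\Big)^{-1}|A|\ \ge\ c\,|A|,
\]
with $c>0$ depending only on $\lambda,\phi,\tau,M$ and not on $z\in\Omega$ (the Gaussian density is $\ge(2\pi)^{-D}e^{-M^2}$ on $\{|\xi_i|\le M\}$ and $|\det D\Psi_z|\le C\tau^D$ there). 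Taking $\eta(\cdot)=|\,\cdot\cap B\,|/|B|$ and $\epsilon=c\,|B|$ yields $\Pi_{t,z}\ge\epsilon\,\eta$ for all $z\in\Omega$, the asserted minorization. I expect the uniformity step of the previous paragraph to be the main obstacle: every quantitative bound must be made independent of $z\in\Omega$ using only $\Phi\in C^2$ plus energy conservation, with no global control on the derivatives of $\Phi$.
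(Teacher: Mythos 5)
Your proposal is correct and, at bottom, follows the same route as the paper: both arguments discard all of $\Pi_{t,z}$ except the exactly-two-randomization contribution (your conditioning on $N_t=2$ produces precisely the term the paper isolates by applying its Duhamel formula three times, namely $\mathcal{C}_{\theta_{t-t_1}}\mathcal{A}\,\mathcal{C}_{\theta_{t_1-t_2}}\mathcal{A}\,\mathcal{C}_{\theta_{t_2}}$ integrated over jump times with the inter-kick gap bounded away from $0$, just as your box $\mathcal{B}$ does), and both rest on the fact that a kick--short-flow--kick sandwich has a nondegenerate transition density because $\partial q(\tau)/\partial p(0)\approx \tau I$ for small $\tau$. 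The differences are in implementation: the paper writes the density of $\mathcal{A}\,\mathcal{C}_{\theta_\tau}\mathcal{A}$ in closed form via the generating function $S_\tau$, as $|\det D_{12}S_\tau|\,g_\phi(p,-D_1S_\tau)\,g_\phi(D_2S_\tau,p_1)$, whereas you obtain it by a direct change of variables in $(\xi_1,\xi_2)$ with the Jacobian controlled through the variational equations; these encode the same short-time nondegeneracy, since $D_{12}S_\tau$ is (up to sign) the inverse of $\partial q(\tau)/\partial p(0)$. You also make explicit the uniformity-in-$z$ and covering step (fixing $\kappa=\tau M$, and using energy conservation with $\Phi\ge 0$ to confine the inter-kick arc to a fixed compact set), which the paper compresses into its final sentence; your treatment of this genuinely delicate point is sound. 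Two small repairs: for partial randomization, $\phi<\pi/2$, the $\xi_2$ needed to steer onto $\theta_{-(t-s_2)}(w^*)$ must cancel $\cos(\phi)\,p_2^-$, which is of size $O(M)$, so $|\xi_2|=O(\cot(\phi)M)$ rather than $\le M/2$; this is harmless provided you take the $\xi_2$-box of radius proportional to $\cot(\phi)M$, which only changes constants in the Gaussian lower bound. Also, there is no need to insist $B\subseteq\Omega$ (impossible when $\Omega$ has empty interior): for the way the proposition is used in Lemma~\ref{lemma:minorization}, the minorizing measure need not be supported in $\Omega$, so the reference ball can be placed anywhere.
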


To prove this proposition, it is convenient to introduce the following operator: \begin{equation} \label{eq:Aoperator}
\mathcal{A} f(z) =  \E f(\Gamma(z)) =  \int_{\mathbb{R}^{D}} f(q, \eta) g_{\phi}(p, \eta) d \eta
\end{equation}
where $g_{\phi}: \mathbb{R}^{2D} \to \mathbb{R}^+$ is defined as: \begin{equation} \label{eq:Atransitiondensity}
g_{\phi}(p, \eta) = (2 \pi \sin^2\phi )^{-D/2} \exp\left( - \frac{ | \eta - p \cos \phi |^2 }{2 \sin^2 \phi} \right) \;.
\end{equation}
Note that the operator $\mathcal{A}$ appears in the infinitesimal generator in \eqref{eq:generator}.  For any $t\ge0$, let $\theta_t: \mathbb{R}^{2D} \to \mathbb{R}^{2D}$ denote the Hamiltonian flow associated with  \eqref{eq:hamiltonian} and $\mathcal{C}_{\theta_t}$ denote the composition operator for $\theta_t$ defined as: \[
\mathcal{C}_{\theta_t} f(z) = f( \theta_t(z) ) \;.
\]
This Hamiltonian flow can be characterized by a generating function $S_t(q_0, q_1)$ \cite{SaCa1994, MaRa1999, MaWe2001}. Specifically,
if $|q_1 - q_0|$ and $t>0$ are sufficiently small, then $ (q_1, p_1) = \theta_t(q_0, p_0)$ satisfy the following system of equations \begin{equation} \label{eq:discrete_hamiltons_equations}
\begin{aligned}
p_0 &= -D_1 S_t(q_0, q_1) \\
p_1 &= D_2 S_t(q_0, q_1)
\end{aligned}
\end{equation}
Here $D_i$ denotes the derivative with respect to the $i$th component of $S_t$.
Moreover, the generating function can be written as \begin{equation}  \label{eq:generating_function}
S_t(q_0, q_1) = \int_0^t \left( \frac{1}{2} | \dot{q}(s) |^2 - \Phi(q(s)) \right) ds
\end{equation} where $q: [0, t] \to \mathbb{R}^D$ solves the Euler-Lagrangian equations $\ddot q = - \nabla \Phi(q)$ with boundary conditions $q(0) = q_0$ and $q(t) = q_1$.
In discrete mechanics, the equations \eqref{eq:discrete_hamiltons_equations} and the generating function $S_t$ are known as discrete Hamilton's equations and the (exact) discrete Lagrangian, respectively \cite{MaWe2001}.

\begin{proof}
 We adapt to our setting some ideas from the proof of Theorem 2.3 in \cite{ELi2008}.   To establish the desired result, we use the (weak) regularizing effect of the operator $\mathcal{A}$ on a function $f$ in the momentum degrees of freedom.  Since the Hamiltonian flow is regular \cite{MaRa1999}, this regularizing effect can be transferred to the position degrees of freedom of $f$.  Similar results appear in Lemma 2.2 of \cite{ELi2008} and the proof of Theorem~2.2 of \cite{BoOw2010}.


Specifically, a change of variables shows that: \begin{equation} \label{eq:ACAtransitiondensity}
(\mathcal{A} C_{\theta_t} \mathcal{A}  f)(q,p) = \int_{\mathbb{R}^D} \int_{\mathbb{R}^D} f(q_1, p_1) g_{t,\phi}((q,p), (q_1, p_1)) dq_1 dp_1
\end{equation}
where we have introduced the transition density of the operator $\mathcal{A} C_{\theta_t} \mathcal{A}$: \[
g_{t,\phi}((q,p), (q_1, p_1))  =  |\det D_{12} S_t(q,q_1)|  g_{\phi}(p,  -D_1 S_t(q, q_1)) g_{\phi}( D_2 S_t(q, q_1), p_1)
\]
in terms of $g_{\phi}$ in \eqref{eq:Atransitiondensity} and $S_t$ in \eqref{eq:generating_function}.



To take advantage of \eqref{eq:ACAtransitiondensity}, we consider the following Duhamel formula: \begin{align*}
\left. e^{\lambda^{-1} (s-t)} \mathcal{C}_{\theta_{t-s}} P_s f( z ) \right|_{s=0}^{s=t}
&= P_t f(z) - e^{- \lambda^{-1} t} \mathcal{C}_{\theta_t} f( z)  \\
&= \lambda^{-1} \int_0^t e^{\lambda^{-1} (s-t)}  \mathcal{C}_{\theta_{t-s}} \mathcal{A} P_s f ( z )   ds
\end{align*}
A second application of this Duhamel formula implies that \begin{align*}
& P_t f(z) =
e^{- \lambda^{-1} t} \mathcal{C}_{\theta_{t}}  f(z)
+ \lambda^{-1} \int_0^t e^{- \lambda^{-1} t} \mathcal{C}_{\theta_{t-t_1}}  \mathcal{A} \mathcal{C}_{\theta_{t_1}}  f ( z)  dt_1 \\
&\qquad +\lambda^{-2} \int_0^t \int_0^{t_1} e^{\lambda^{-1} (t_1-t)}   e^{\lambda^{-1} (t_2-t_1)}  C_{\theta_{t-t_1}} \mathcal{A} C_{\theta_{t_1-t_2}}   \mathcal{A} P_{t_2} f ( z )   dt_2 dt_1
\end{align*}
A third application of this formula yields \begin{align}
P_t f(z) &\ge  \lambda^{-2} \int_0^t \int_0^{t_1} e^{\lambda^{-1} (t_2-t)}   C_{\theta_{t-t_1}}  \mathcal{A} C_{\theta_{t_1-t_2}}  \mathcal{A}  \mathcal{C}_{\theta_{t_2}}  f ( z )   dt_2 dt_1 \nonumber \\
&\ge \lambda^{-2} \int_{\epsilon t}^t \int_0^{t_1-\epsilon t_1} e^{\lambda^{-1} (t_2-t)}   C_{\theta_{t-t_1}}  \underbrace{\mathcal{A} C_{\theta_{t_1-t_2}}  \mathcal{A}}  \mathcal{C}_{\theta_{t_2}}  f ( z )   dt_2 dt_1 \label{eq:lower_bound_on_Pt}
\end{align}
for $\epsilon$ sufficiently small.   Since $\epsilon t \le t_1 \le t$ and $0 \le t_2 \le t_1 - \epsilon t_1$, we have that $ \epsilon^2 t \le t_1 - t_2 \le t$.
Combining this result with \eqref{eq:ACAtransitiondensity} applied to the bracketed term in \eqref{eq:lower_bound_on_Pt}, yields the desired result.

\end{proof}

%

Next we show that Hypothesis~\ref{minorization} of Harris Theorem holds for the transition probabilities of the RHMC method.
To state this Proposition, we will use the total variation (TV) distance between  measures. Recall that
\begin{equation}
\|\mu - \nu\|_\TV = 2\sup_A |\mu(A) - \nu(A)|\;,
\end{equation}
where the supremum runs over all measurable sets. In particular, the total variation distance
between two probability measures is two if and only if they are mutually singular.

%
%

\begin{lemma} \label{lemma:minorization}
Suppose Hypothesis~\ref{hypothesis_on_phi} holds and let $V(z)$ be the Lyapunov function from Lemma~\ref{lemma:rhmc_drift_condition}.
For every $E>0$, there exist $t>0$ and $\epsilon>0$ such that: \[
\| \Pi_t(z_1, \cdot) - \Pi_t(z_2, \cdot) \|_{\TV} \le 2 (1- \epsilon)
\] holds for all $z_1, z_2 \in \mathbb{R}^{2D}$ satisfying $V(z_1) \vee V(z_2) <E$.
\end{lemma}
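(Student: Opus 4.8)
The plan is to obtain this Lemma as a direct consequence of the minorization condition in Proposition~\ref{prop:minorization}, combined with the standard coupling estimate relating a common minorant to the total variation distance. The first step is to observe that for every $E>0$ the sublevel set
\[
\Omega_E := \{ z \in \mathbb{R}^{2D} : V(z) \le E \}
\]
is compact. Indeed, $V$ is continuous (it is $C^1$, being a sum of $H \in C^2$ and a quadratic polynomial), and by Lemma~\ref{lemma:rhmc_drift_condition} it is nonnegative with $\lim_{|z|\to\infty} V(z) = \infty$; hence $\Omega_E$ is closed and bounded, therefore compact. Any pair $z_1, z_2 \in \mathbb{R}^{2D}$ satisfying $V(z_1) \vee V(z_2) < E$ lies in $\Omega_E$.

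The second step is to apply Proposition~\ref{prop:minorization} to the compact set $\Omega = \Omega_E$. This produces a probability measure $\eta$ on $\Omega_E$, together with constants $\epsilon > 0$ and $t > 0$ (both depending on $E$), such that
\[
\Pi_t(z, \cdot) = \Pi_{t,z}(\cdot) \ge \epsilon\, \eta(\cdot) \quad \text{for all } z \in \Omega_E.
\]
In particular this holds simultaneously for $z = z_1$ and $z = z_2$ whenever $V(z_1) \vee V(z_2) < E$, with the \emph{same} $\eta$, $\epsilon$, and $t$.

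The third step is the elementary coupling bound. If $\mu$ and $\nu$ are probability measures on $\mathbb{R}^{2D}$ with $\mu \ge \epsilon \eta$ and $\nu \ge \epsilon \eta$ for a common probability measure $\eta$, then one may write $\mu = \epsilon \eta + (1-\epsilon)\tilde\mu$ and $\nu = \epsilon \eta + (1-\epsilon)\tilde\nu$ with $\tilde\mu, \tilde\nu$ probability measures, so that $\mu - \nu = (1-\epsilon)(\tilde\mu - \tilde\nu)$ and therefore
\[
\| \mu - \nu \|_{\TV} = (1-\epsilon)\, \| \tilde\mu - \tilde\nu \|_{\TV} \le 2(1-\epsilon).
\]
Taking $\mu = \Pi_t(z_1, \cdot)$ and $\nu = \Pi_t(z_2, \cdot)$ yields the assertion. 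I do not expect any genuine obstacle here, since the substantive work is already contained in Proposition~\ref{prop:minorization}; the only points demanding a little care are the passage from the strict inequality $V(z_1) \vee V(z_2) < E$ to the closed (hence compact) sublevel set $\Omega_E$ on which the proposition applies, and making explicit that $t$ and $\epsilon$ depend only on $E$, so that the bound is uniform over all admissible pairs $(z_1, z_2)$.
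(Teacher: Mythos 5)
Your proposal is correct and follows essentially the same route as the paper: it invokes Proposition~\ref{prop:minorization} on the compact sublevel set of $V$ and then uses the decomposition $\Pi_{t,z} = \epsilon\,\eta + (1-\epsilon)\,\tilde\Pi_{t,z}$ together with the bound $\|\tilde\Pi_{t,z_1}-\tilde\Pi_{t,z_2}\|_{\TV}\le 2$, which is exactly the paper's argument. The only difference is that you spell out the compactness of $\{V\le E\}$ and the uniformity of $t,\epsilon$ in $E$, details the paper leaves implicit.
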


\begin{proof}
Proposition~\ref{prop:minorization} implies the
following transition probability $\tilde\Pi_{t,z}$ is well-defined: \[
\tilde\Pi_{t,z}(\cdot)
= \frac{1}{1-\epsilon} \Pi_{t,z}(\cdot) - \frac{\epsilon}{1-\epsilon} \eta(\cdot)
\]
for any $z$ satisfying $V(z)<E$.   Therefore, \[
\| \Pi_{t,z_1}(\cdot)- \Pi_{t,z_2}(\cdot) \|_{\TV} =
(1-\epsilon) \|  \tilde\Pi_{t,z_1}(\cdot) - \tilde\Pi_{t,z_2}(\cdot) \|_{\TV}
\]
for all $z_1,z_2$ satisfying
$V(z_1) \vee V(z_2) < E$.
Since the TV norm is bounded by $2$, one obtains the desired result.
\end{proof}

\subsection{Main Result: Geometric Ergodicity}

With a Lyapunov function and minorization condition in hand, we are now in position to state a main result of this paper.

\begin{theorem}
Suppose Hypothesis~\ref{hypothesis_on_phi} holds and let $V(z)$ be the Lyapunov function from Lemma~\ref{lemma:rhmc_drift_condition}.
Then the Markov process induced by $L$ has a unique invariant probability measure given by $\PiBG$.  Furthermore, there exist positive constants $r$ and $C$ such that \begin{equation} \label{eq:ell1bound}
 \| \Pi_{t,z} - \PiBG \|_{\TV} \le C \; V(z) \; e^{- r t}
\end{equation}
for  all $t \ge 0$ and all $z \in \mathbb{R}^{2D}$.
\end{theorem}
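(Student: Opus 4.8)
The plan is to obtain the statement by feeding the ingredients assembled in the preceding subsections into Harris' Theorem (Appendix~\ref{sec:harris_theorem}); at this point the work is essentially one of verification and bookkeeping. First I would check that Hypothesis~\ref{driftcondition} holds for the transition semigroup $(P_t)$ of RHMC: by Lemma~\ref{lemma:rhmc_drift_condition} the function $V$ of \eqref{eq:lyapunovfunction} is continuously differentiable, nonnegative, satisfies $V(z)\to\infty$ as $|z|\to\infty$ (so its sublevel sets are compact), and obeys the infinitesimal inequality $LV\le-\gamma V+K$; integrating this, as recorded in the remark following that lemma, gives $\E_z V(Z_t)\le e^{-\gamma t}V(z)+(K/\gamma)(1-e^{-\gamma t})$, which is precisely the Foster--Lyapunov bound Harris' Theorem requires. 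Next I would check the minorization Hypothesis~\ref{minorization}: fixing the level $E>0$ appearing there, Lemma~\ref{lemma:minorization} supplies $t>0$ and $\epsilon\in(0,1)$ with $\|\Pi_{t,z_1}(\cdot)-\Pi_{t,z_2}(\cdot)\|_\TV\le 2(1-\epsilon)$ whenever $V(z_1)\vee V(z_2)<E$, i.e.\ the required uniform contraction of the time-$t$ transition kernels over the $V$-sublevel set (Proposition~\ref{prop:minorization} is what makes this available).

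With both hypotheses in force, Harris' Theorem yields a unique invariant probability measure $\mu_\star$ for $(P_t)$ together with a weighted geometric contraction: there exist $C>0$ and $r>0$ such that $\|\Pi_{t,z}-\mu_\star\|_\TV\le C\,(1+V(z))\,e^{-rt}$ for all $z\in\mathbb{R}^{2D}$ and $t\ge0$. Two small points must be attended to here. One invokes the continuous-time form of Harris' Theorem, which is stated with the minorization holding at a single time horizon; extending the contraction from integer multiples of that horizon to arbitrary $t\ge0$ uses the drift bound of the first paragraph to control the process over the leftover (bounded) time interval. Also, the natural output is a bound in the $(1+V)$-weighted norm rather than the $V$-weighted norm; since $1+V$ is again a nonnegative Lyapunov function with compact sublevel sets and is still $\PiBG$-integrable by Hypothesis~\ref{hypothesis_on_phi}~{\bf H1}, one may either run the argument with $1+V$ in place of $V$ throughout or simply absorb the additive constant into $C$, arriving at the form \eqref{eq:ell1bound}.

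It remains to identify $\mu_\star$ with $\PiBG$. Combining Proposition~\ref{prop:infinitesimal_invariance} with Lemma~\ref{lemma:martingale_problem_solution}, exactly as in the invariance lemma of the previous subsection, shows that $\PiBG$ is an invariant probability measure for $(P_t)$; since $V$ is integrable against $\PiBG$ by Hypothesis~\ref{hypothesis_on_phi}~{\bf H1}, the uniqueness clause of Harris' Theorem forces $\mu_\star=\PiBG$, and substituting this into the weighted contraction gives \eqref{eq:ell1bound}. I do not anticipate a substantive difficulty in any single step; the main point requiring care is matching the precise formulation of the continuous-time Harris Theorem in the Appendix to the statements of Lemmas~\ref{lemma:rhmc_drift_condition} and~\ref{lemma:minorization} — in particular, ensuring the minorization is applied on the sublevel set dictated by Harris' Theorem (with $t$ and $\epsilon$ allowed to depend on $E$) and that the interpolation over leftover time intervals is legitimately justified by the drift estimate.
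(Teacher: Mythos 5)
Your proposal is correct and follows essentially the same route as the paper: feed the Foster--Lyapunov drift condition of Lemma~\ref{lemma:rhmc_drift_condition} and the minorization of Lemma~\ref{lemma:minorization} into Theorem~\ref{uncountable_harris} to get a unique invariant measure with geometric TV convergence, then identify that measure with $\PiBG$ via the invariance established from Proposition~\ref{prop:infinitesimal_invariance} and Lemma~\ref{lemma:martingale_problem_solution}. Your extra bookkeeping (the $1+V$ versus $V$ weight and the interpolation over leftover time intervals) is harmless but already absorbed into the statement of Theorem~\ref{uncountable_harris} as given in the Appendix.
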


\begin{proof}
 According to Lemma~\ref{lemma:rhmc_drift_condition}, the generator $L$ satisfies a Foster-Lyapunov Drift Condition.  Moreover, its associated transition probabilities satisfy Lemma~\ref{lemma:minorization}.  Hence, Theorem~\ref{uncountable_harris} implies that (i) the process possesses a unique invariant distribution, and (ii) the transition probability of the process converges to this invariant distribution geometrically fast in the TV metric.  Since, by Prop.~\ref{prop:infinitesimal_invariance}, $\PiBG$ is an infinitesimally invariant measure for the process, and that compactly supported functions are in the domain of $L$, it follows that $\PiBG$ is the unique invariant measure for the process.
\end{proof}

%
%

\section{Model Problem} \label{sec:model_problem}

For simplicity, we assume in this section and the next that the Horowitz angle is chosen as $\phi=\pi/2$, i.e., the momentum randomizations are complete rather than partial.

We quantify the sampling capabilities of the RHMC method given in Algorithm~\ref{algo:rhmc} on a model  problem in which the target is a multivariate Gaussian distribution with uncorrelated components, some of them with small variances.  We also compare against the standard HMC method given in Algorithm~\ref{algo:hmc}, where the duration is a fixed parameter.  This model problem is discussed in \cite{Ne2011} and analyzed further in \cite{BlCaSa2014}.  This distribution can be interpreted as a truncation of an infinite-dimensional normal distribution on a Hilbert space \cite{BePiSaSt2011}. The potential energy function is given by: \begin{equation} \label{eq:potential_energy_gaussian}
\Phi(q_1, \cdots, q_D) = \sum_{i=1}^D \frac{1}{2 \sigma_i^2} q_i^2 \;,
\end{equation} where $\sigma_i^2$ is the variance in the $i$th component.

\subsection{The process}



For \eqref{eq:potential_energy_gaussian}, a sample trajectory of the RHMC method satisfies
 \begin{equation} \label{eq:ith_sde}
q_i(t_{n+1}) = \cos\left( \frac{\delta t_n}{\sigma_i} \right) q_i(t_n) + \sigma_i \sin\left( \frac{\delta t_n}{\sigma_i} \right) \xi_{i, n}  
\end{equation} with $q_i(0)$ given and where $\{ t_n \}$ are random jump times related by \[
t_n = t_{n-1} + \delta t_{n-1}   \;,
\] with $t_0=0$. Here we have introduced the following sequences of random variables \[
 \{ \xi_{i,n} \} \overset{\text{iid}}{\sim} \mathcal{N}(0,1)  \;, \quad \{ \delta t_n \} \overset{\text{iid}}{\sim} \Exp\left(\frac{1}{\lambda} \right)
\] where $i$ (resp.~$n$) runs over the components of the target distribution (resp.~jump times).  


The  solution of the stochastic difference equation \eqref{eq:ith_sde} is given by: \begin{equation} \label{eq:ith_solution}
q_i(t_n) = \prod_{j=0}^{n-1} \cos\left( \frac{\delta t_j}{\sigma_i} \right) q_i(0) + \sum_{j=0}^{n-1} \sigma_i \sin\left( \frac{\delta t_j}{\sigma_i} \right) \prod_{k=j+1}^{n-1} \cos\left(\frac{\delta t_k}{\sigma_i} \right) \xi_{i, j} 
\end{equation}
(We adhere to the standard convention that  $\prod_{k=m}^n \cdot $ takes the value 1 if $m>n$.)  
At steady-state the $i$th component of this process is normally distributed with mean zero and variance $\sigma_i^2$, i.e., \[
q_i(t) \overset{\text{d}}{\to} \mathcal{N}(0,\sigma_i^2) \quad \text{as $t \to \infty$} \;.
\]
The corresponding solution for HMC is given by formula \eqref{eq:ith_solution} with constant, as opposed to random, durations $\delta t_n$.  Note that,  for constant duration, formula (\ref{eq:ith_sde}) makes apparent that, at stationarity, the correlation between $q_i(t_{n+1})$ and $q_i(t_n)$ is $\cos(\delta t_n/\sigma_i)$. When $\lambda$ is an even integer multiple of $\pi\sigma_i$, $q_i(t_{n+1})=q_i(t_n)$; for odd multiples,
$q_i(t_{n+1})=-q_i(t_n)$. For those value of $\lambda$ the chain is not ergodic; for values of $\lambda$ close to an integer multiple of $\pi\sigma_i$, the performance of the chain may be expected to be poor.

\subsection{Integrated Autocorrelation Time (IAC)}

The first measure of the quality of the samples provided by the algorithm, we consider is the integrated autocorrelation time (IAC) associated with estimating the mean of the target distribution.
The natural estimator for this purpose is: \[
\hat{f}_{i,N} =  \frac{1}{N+1} \sum_{j=0}^{N} q_i(t_j) \;.
\]
As shown in, e.g., Chapter IV of \cite{AsGl2007}, asymptotically as $n\rightarrow \infty$, the variance of the estimator behaves as
 $\hat{\sigma}_i^2 /n$, where the constant $\hat{\sigma}_i^2$  is called the asymptotic variance.
This may be computed by means of the formula
\begin{equation}\label{eq:series}
\hat{\sigma}_i^2 =  \Var(q_i(0)) + 2 \sum_{j=1}^{\infty} \cov(q_i(0), q_i(t_j) ) \;.
\end{equation}
This  holds for any random variables $q_i(t_j)$  for which both
 $\E(q_i(t_j))$ and $\E(q_i(t_j)q_i(t_{j+k}))$ are independent of $j$  (it is not necessary, in particular, that the $q_i(t_j)$ originate from a Markov chain),  provided that the series converges absolutely.
 If successive samples are independent, then the series  vanishes and the asymptotic variance equals the variance $\sigma_i^2$ of $q_i(0)$. The integrated autocorrelation time (IAC) of the estimator $\hat{f}_{i,N}$ is defined as the ratio $\hat{\sigma}_i^2 / \sigma_i^2$.   It follows from \eqref{eq:ith_solution} that,
 at stationarity of the chain,
 \begin{align*}
 \cov(q_i(0), q_i(t_j) )  = \E (q_i(0)  q_i(t_j))
 &= \sigma_i^2  \E \prod_{k=0}^{j-1}  \cos\left( \frac{\delta t_k}{\sigma_i} \right) \\
 &=  \sigma_i^2  \prod_{k=0}^{j-1} \E \cos\left( \frac{\delta t_k}{\sigma_i} \right) = \sigma_i^2 \left( \frac{\sigma_i^2}{\sigma_i^2+\lambda^2} \right)^j
\end{align*}
and, hence,
\begin{equation} \label{eq:rhmc_iac}
\IAC_i^{\RHMC}  = 1 + 2 \frac{\sigma_i^2}{\lambda^2} \;.
\end{equation}
Thus, the IAC in each component monotonically decreases with increasing $\lambda$ and then plateaus at unity.

A very similar calculation shows that the corresponding formula for the HMC method is given by
\begin{equation} \label{eq:hmc_iac}
\IAC_i^{\HMC} =  \frac{1+\cos\left( \dfrac{\lambda}{\sigma_i} \right)}{1-\cos\left( \dfrac{\lambda}{\sigma_i} \right)} \;.
\end{equation}  Unlike \eqref{eq:rhmc_iac}, the IAC for the HMC method is an oscillatory function of $\lambda$: as discussed in the introduction it is possible that
increasing $\lambda$ (and therefore increasing computational costs) results in higher correlation.

It is useful to discuss the following choices of $\lambda$:
\begin{itemize}
\item  $\lambda$ is an even multiple of $\pi\sigma_i$ leading to $q_i(t_{n+1})=q_i(t_n)$.  The chain is not ergodic and formula \eqref{eq:series} is not valid, due to the divergence of the series. Note however that the estimator yields the value $q_i(0)$;
     when $q(0)$ is taken from the stationary distribution,  the variance of the estimator is therefore $\sigma_i^2$, regardless of the number of samples. Thus the asymptotic variance is not finite, which agrees with \eqref{eq:hmc_iac}, even though this formula was derived from \eqref{eq:series}, invalid in this case.
\item  $\lambda$ is an even multiple of $\pi\sigma_i$, with $q_i(t_{n+1})=-q_i(t_n)$. The chain is not ergodic
    and formula \eqref{eq:series} is not valid, due to the divergence of the series. However, for  the particular statistics being considered, the estimation is exceptionally good.
    If the number of samples is even, the estimator gives the exact value 0, and, for an odd number, the estimator yields the very accurate  value $q_i(0)/(N+1)$ (with an $\mathcal{O}(N^{-1})$, rather than
    $\mathcal{O}(N^{-1/2})$ error). Therefore the asymptotic variance vanishes, in agreement with
    \eqref{eq:hmc_iac}.
\end{itemize}

\subsection{Mean Squared Displacement (MSD)}

As another metric for the quality of sampling, we consider  the single-step, equilibrium mean-squared displacement of the RHMC and HMC methods.  This statistic quantifies how far apart successive samples are.  A direct calculation using \eqref{eq:ith_sde} shows that \begin{align*}
\MSD^{\RHMC} &= \sum_{i=1}^D \E |q_i(t_1) - q_i(0)|^2 \\
&= \sum_{i=1}^D \E \left\{  \left| (\cos\left( \frac{t_1}{\sigma_i} \right) - 1) q_i(0) + \sigma_i \sin\left( \frac{t_1}{\sigma_i} \right) \right|^2 \right\}  \\
&= \sum_{i=1}^D \E \left\{ ( \cos\left( \frac{t_1}{\sigma_i} \right) - 1)^2  + \sin^2 \left( \frac{t_1}{\sigma_i} \right) \right\}  \sigma_i^2   \\
&=  \sum_{i=1}^D \E \left\{ 2 - 2 \cos\left( \frac{t_1}{\sigma_i} \right) \right\} \sigma_i^2,
\end{align*}
which implies that: \begin{equation} \label{eq:rhmc_msd}
  \MSD^{\RHMC} = \sum_{i=1}^D \frac{2 \lambda^2 \sigma_i^2}{\sigma_i^2+\lambda^2}   \end{equation}
Note that $\MSD^{\RHMC}$ monotonically increases with increasing $\lambda$ and then plateaus at $\sum_{i=1}^D 2 \sigma_i^2$.  Since in the present scenario of small time steps, the computational cost of the algorithm is proportional to $\lambda$, the function $\MSD^{\RHMC}/\lambda$ may be taken as a measure of the efficiency of the algorithm. For $\lambda$ small this function increases with $\lambda$; it reaches a maximum at
\begin{equation}\label{eq:efficiency}
\lambda_{\max}= \left( \frac{\sum_{i=1}^D\sigma_i^4} {\sum_{i=1}^D\sigma_i^2} \right)^{1/2}
\end{equation}
and then decreases. The quantity \eqref{eq:efficiency} is approximately $\max_{1\leq i\leq D} \sigma_i$ and the conclusion is that, from this point of view, the best value of $\lambda$ coincides with the standard deviation of the least constrained variate. Taking $\lambda$ above the optimal value will not decrease the mean square displacement (as distinct from the situation for HMC we discuss next), but will waste computational resources.

A similar calculation shows that the single-step, equilibrium mean-squared displacement  of the  HMC is given by \begin{equation} \label{eq:hmc_msd}
\MSD^{\HMC} = \sum_{i=1}^D 2 \left( 1- \cos\left( \dfrac{\lambda}{\sigma_i} \right) \right) \sigma_i^2
\end{equation}
which is an oscillatory function of $\lambda$.

\section{Numerical Testing}
\label{sec:numerics}

\subsection{Standard Normal Distribution}

We first numerically illustrate the analysis in Section~\ref{sec:model_problem} assuming $D=1$ and unit variance.

Figure~\ref{fig:iac_gaussian_1d} corresponds to $\IAC$. In the left panel (HMC),
for $\lambda$ close to $\pi$ or $3\pi$ there is a clear discrepancy between  the function in \eqref{eq:hmc_iac} and the empirical estimate based on a single trajectory: due to the divergence of the series \eqref{eq:series} the software package used to measure  $\IAC$ does not work satisfactorily.
The right panel corresponds to RHMC.  Note the horizontal asymptote at unity, which is consistent with \eqref{eq:rhmc_iac}.

Figure~\ref{fig:msd_gaussian_1d} shows results for  $\MSD$.   On the left (HMC), the lack of ergodicity at the values  $\pi$, $3\pi$ entails, in spite of the large number ($10^6$) of samples,  discrepancies between the value at stationarity in (\ref{eq:hmc_msd}) and the empirical value along the trajectory considered. We observe the monotonic behavior on the right (RHMC) with a horizontal asymptote at 2.


\begin{figure}
\begin{center}
\includegraphics[width=0.45\textwidth]{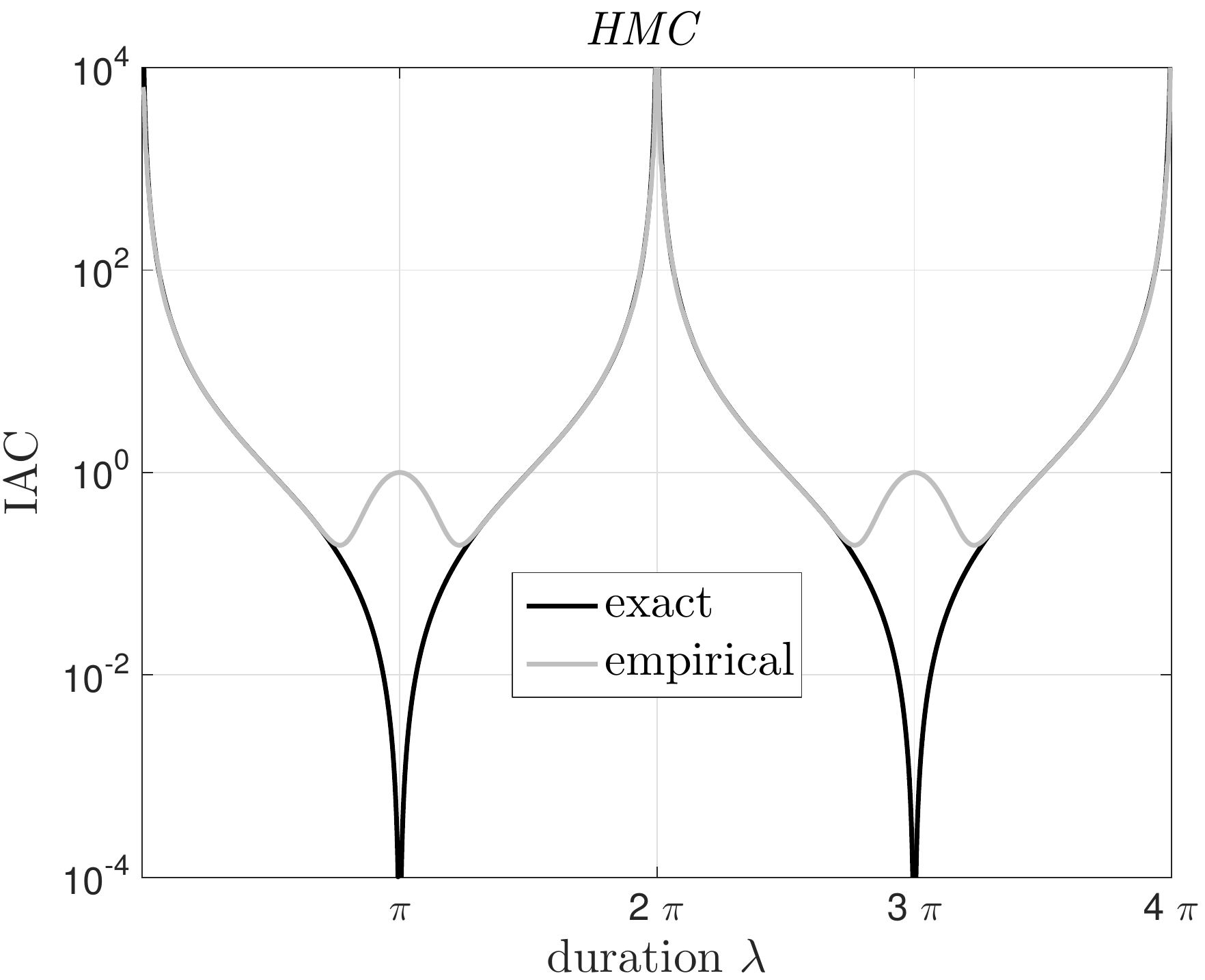}
\includegraphics[width=0.45\textwidth]{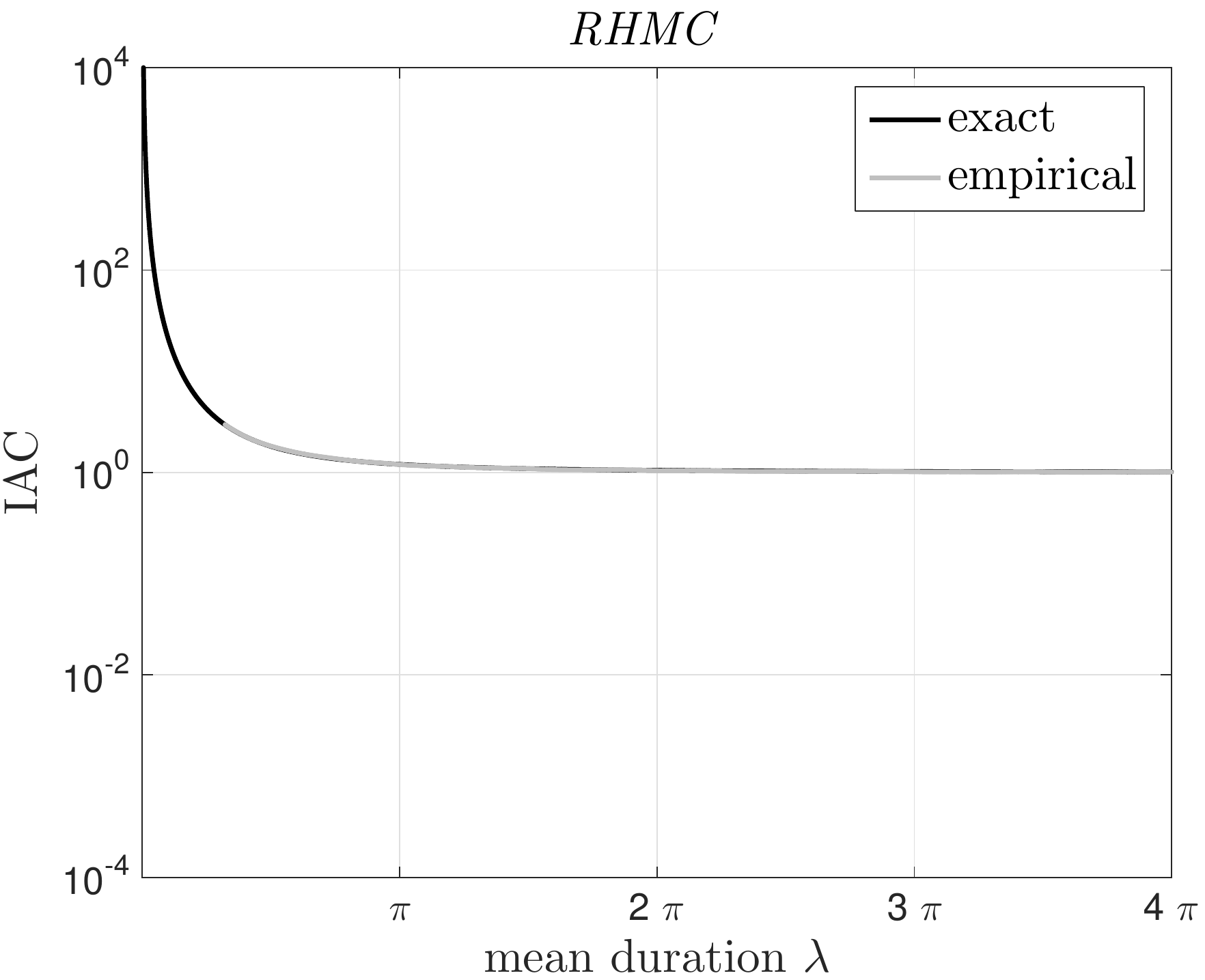}
\end{center}
\caption{ \small {\bf IAC for Normal Distribution.}
The left (resp.~right) panel of this figure plots $\IAC^{\HMC}$ (resp.~$\IAC^{\RHMC}$) vs.~duration (resp.~mean duration) $\lambda$ for the HMC (resp.~RHMC) method applied to a standard normal distribution.    The black lines in the left (resp.~right) panel plot $\IAC^{\HMC}$ (resp.~$\IAC^{\RHMC}$) as given in \eqref{eq:hmc_iac} (resp.~\eqref{eq:rhmc_iac}).  The grey lines show an empirical estimate obtained by using an output trajectory with $10^6$ samples and the ACOR software package \cite{Go2009, So1997}.  
 }
 \label{fig:iac_gaussian_1d}
\end{figure}

\begin{figure}
\begin{center}
\includegraphics[width=0.45\textwidth]{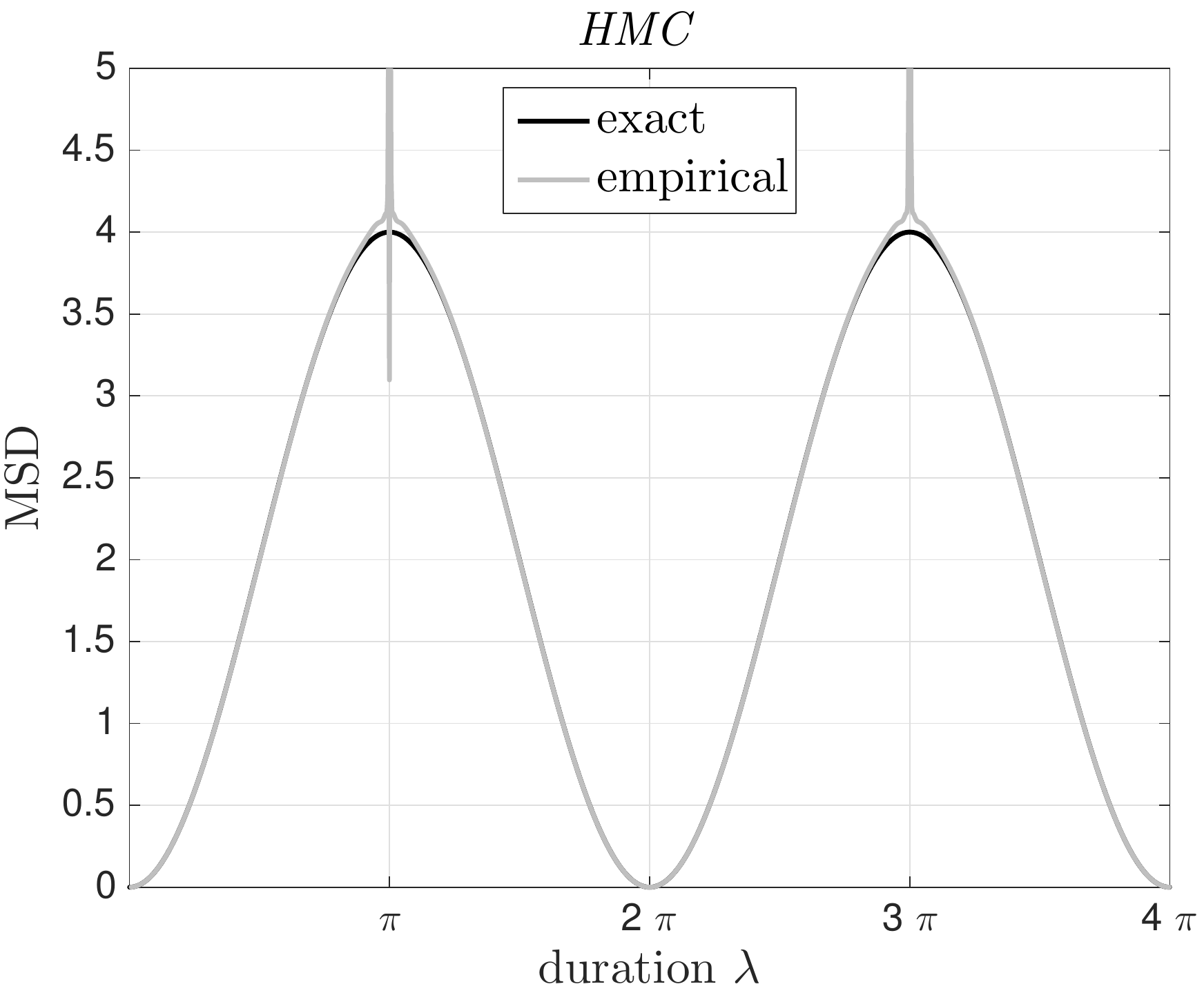}
\includegraphics[width=0.45\textwidth]{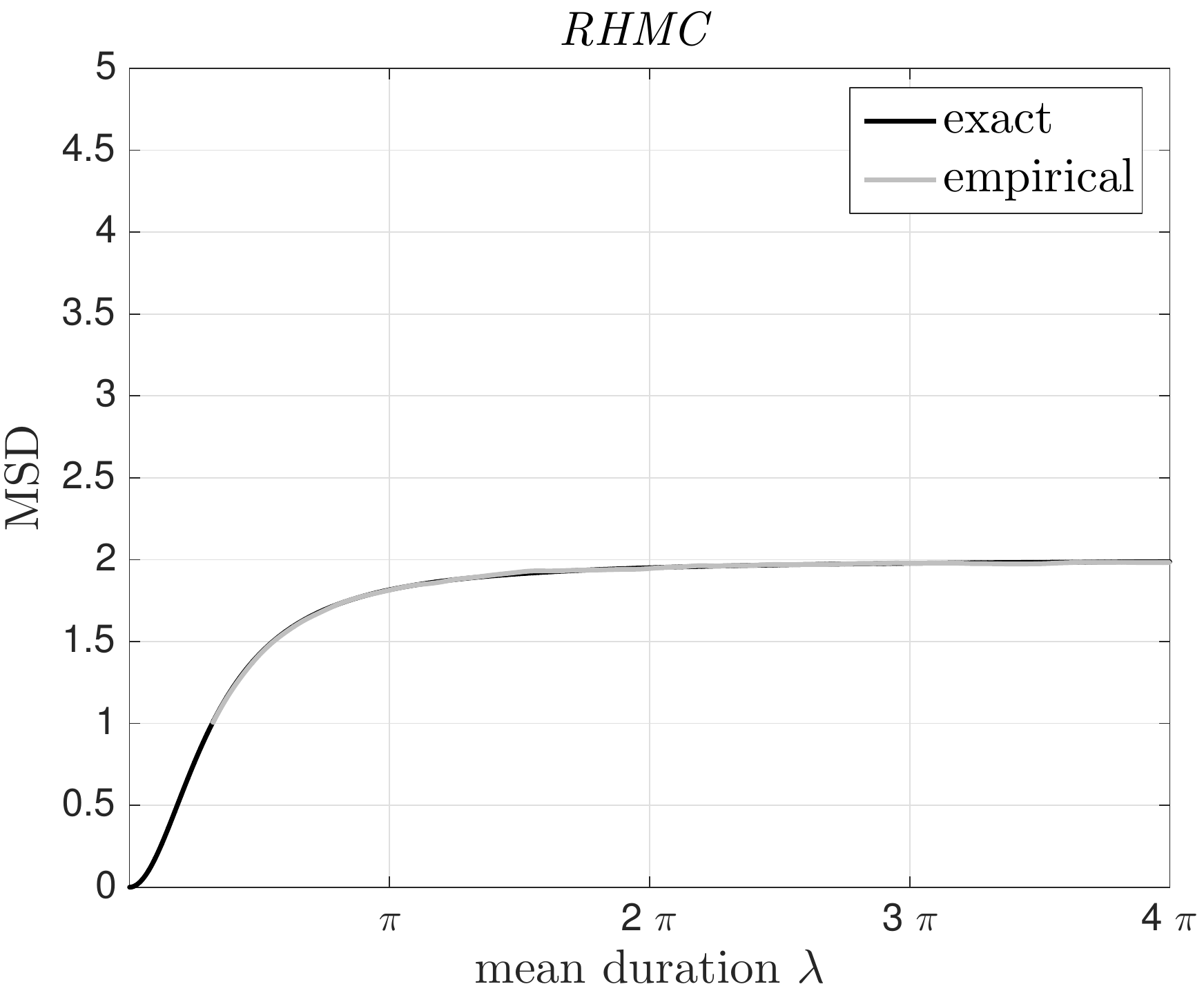}
\end{center}
\caption{ \small {\bf MSD for Normal Distribution.}
The left (resp.~right) panel of this figure plots $\MSD^{\HMC}$ (resp.~$\MSD^{\RHMC}$) vs.~duration (resp.~mean duration)  $\lambda$ for the HMC (resp.~RHMC) method applied to a standard normal distribution.   The black lines in the left (resp.~right) panel plot $\MSD^{\HMC}$ (resp.~$\MSD^{\RHMC}$) as given in \eqref{eq:hmc_msd} (resp.~\eqref{eq:rhmc_msd}). The grey lines show an empirical estimate obtained by using an output trajectory with $10^6$ samples.
}
 \label{fig:msd_gaussian_1d}
\end{figure}


\subsection{Ten-Dimensional Normal Distribution}

 We next consider the case $D=10$ and 
\[
\sigma_i = \frac{i}{D}, \qquad 1 \le i \le D \;.
\]

\noindent
Figure~\ref{fig:iac_gaussian_nd} compares the IAC of HMC and RHMC.
Figure~\ref{fig:msd_gaussian_nd} refers to MSD. The pathologies of HMC in the one-dimensional case discussed above are also manifest here, but, with 10 different frequencies in the Hamiltonin system, there are more resonant choices of $\lambda$ leading to lack of ergodicity.
In the right panel of Figure~\ref{fig:msd_gaussian_nd}, we see that $\MSD^{\RHMC}$ has a horizontal asymptote at \[
\sum_{i=1}^{10} 2 \sigma_i^2 \approx 7.7 \;.
\]  To summarize, the behavior of MSD of the HMC method as a function of the duration is complex, whereas for the RHMC method the dependence is monotonic and plateaus.


\begin{figure}
\begin{center}
\includegraphics[width=0.45\textwidth]{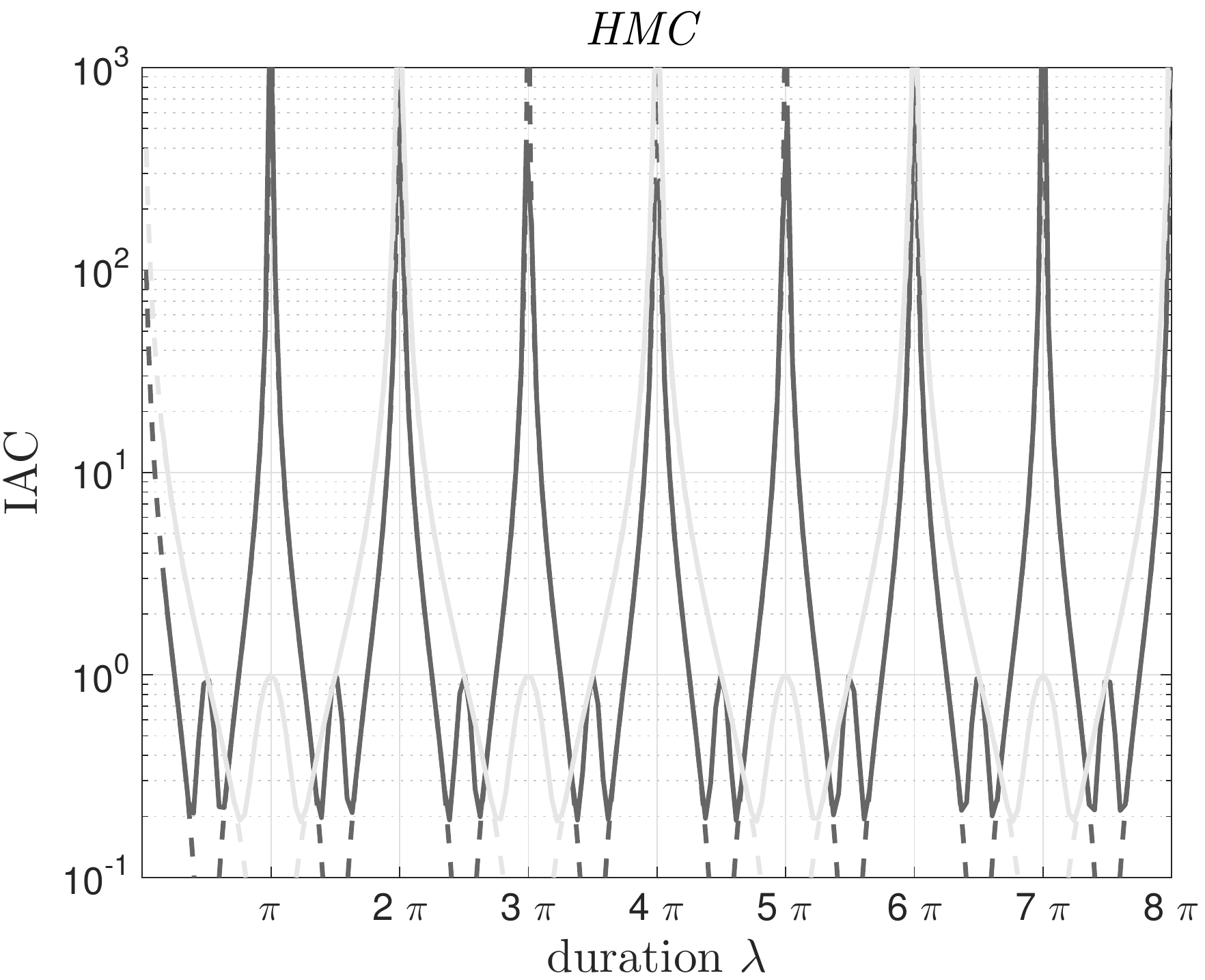}
\includegraphics[width=0.45\textwidth]{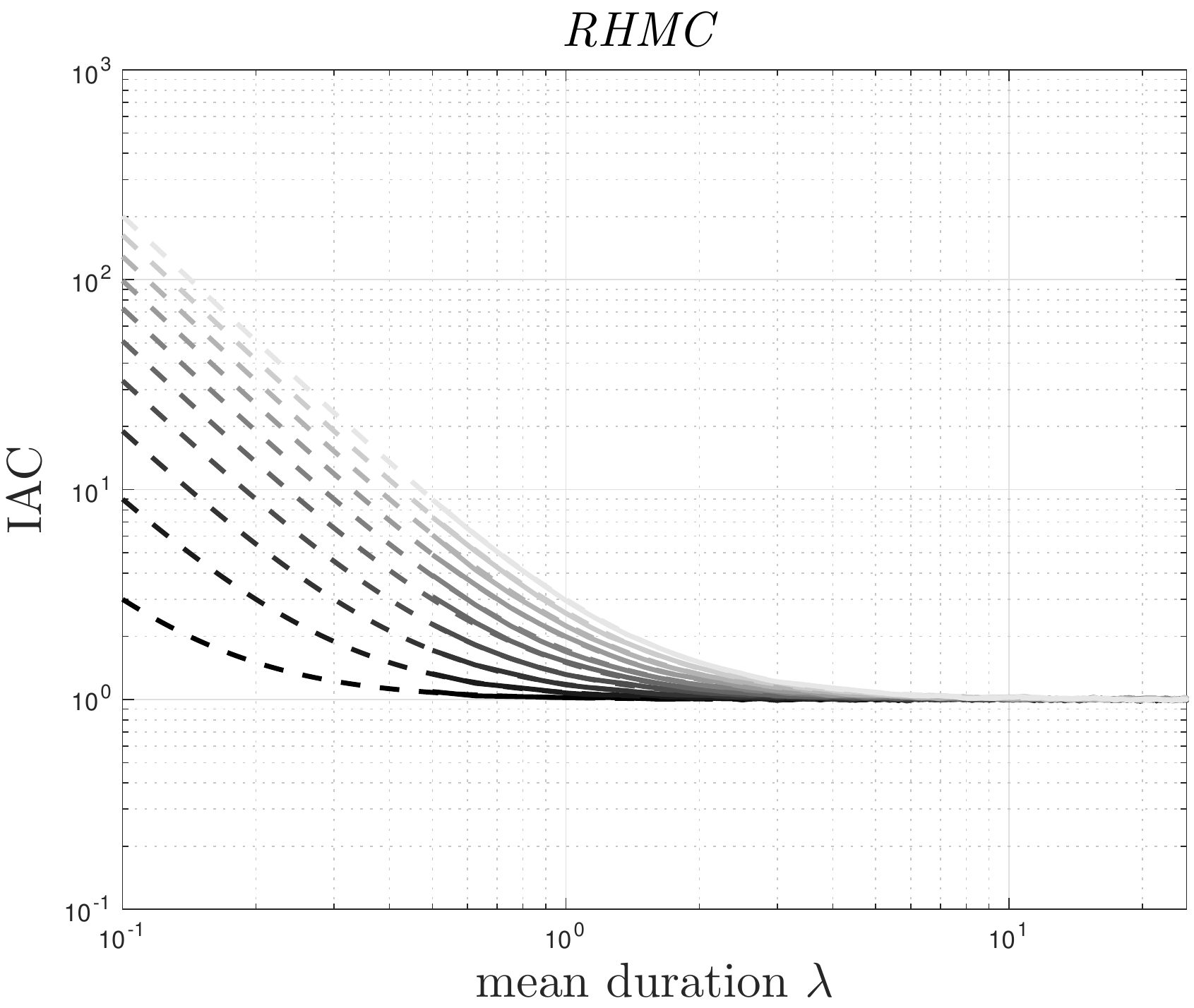}
\end{center}
\caption{ \small  {\bf IAC for Ten-Dimensional Normal Distribution.}
The left panel plots $\IAC^{\HMC}_i$ given in \eqref{eq:hmc_iac} as a function of the duration $\lambda$ with $i=10$ (dashed light grey) and $i=5$ (dashed dark grey); and an approximation of $\IAC^{\HMC}_i$ using a time series: $\{ q_i(t_k)   \}_{1 \le k \le 10^6}$ and the ACOR software package with $i=10$ (solid light grey) and $i=5$ (solid dark grey) \cite{Go2009, So1997}.
The dashed lines in the right panel plot $\IAC^{\RHMC}_i$ given in \eqref{eq:rhmc_iac} as a function of the mean duration $\lambda$ for $1 \le i \le 10$.
The solid lines plot an approximation of $\IAC^{\RHMC}_i$ using a time series: $\{ q_i(t_k)   \}_{1 \le k \le 10^6}$ and the ACOR software package \cite{Go2009, So1997}.
Different shades of grey in the right panel indicate different components of the target distribution, with darker shades corresponding to lower variance.   }
 \label{fig:iac_gaussian_nd}
\end{figure}

\begin{figure}
\begin{center}
\includegraphics[width=0.45\textwidth]{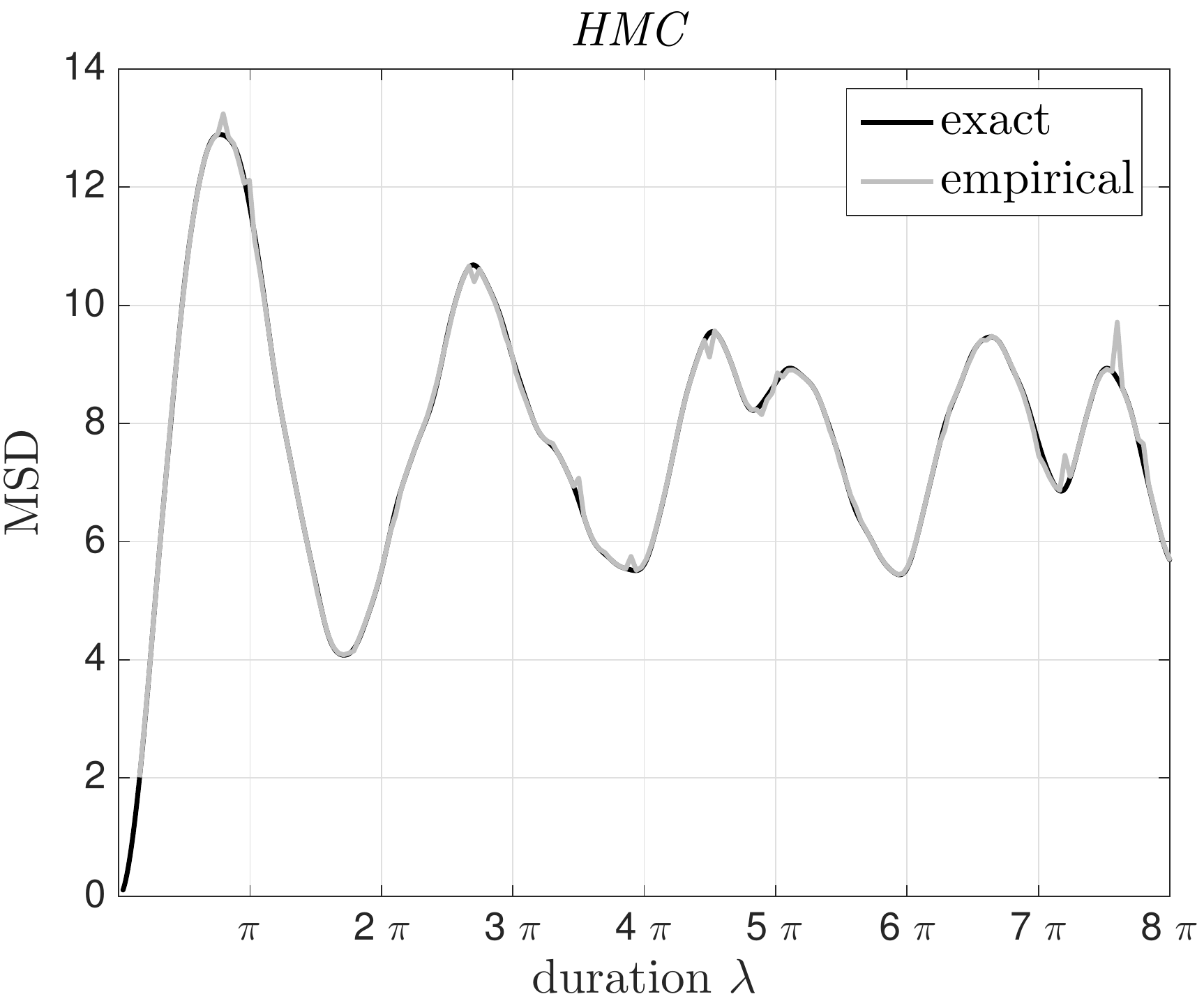}
\includegraphics[width=0.45\textwidth]{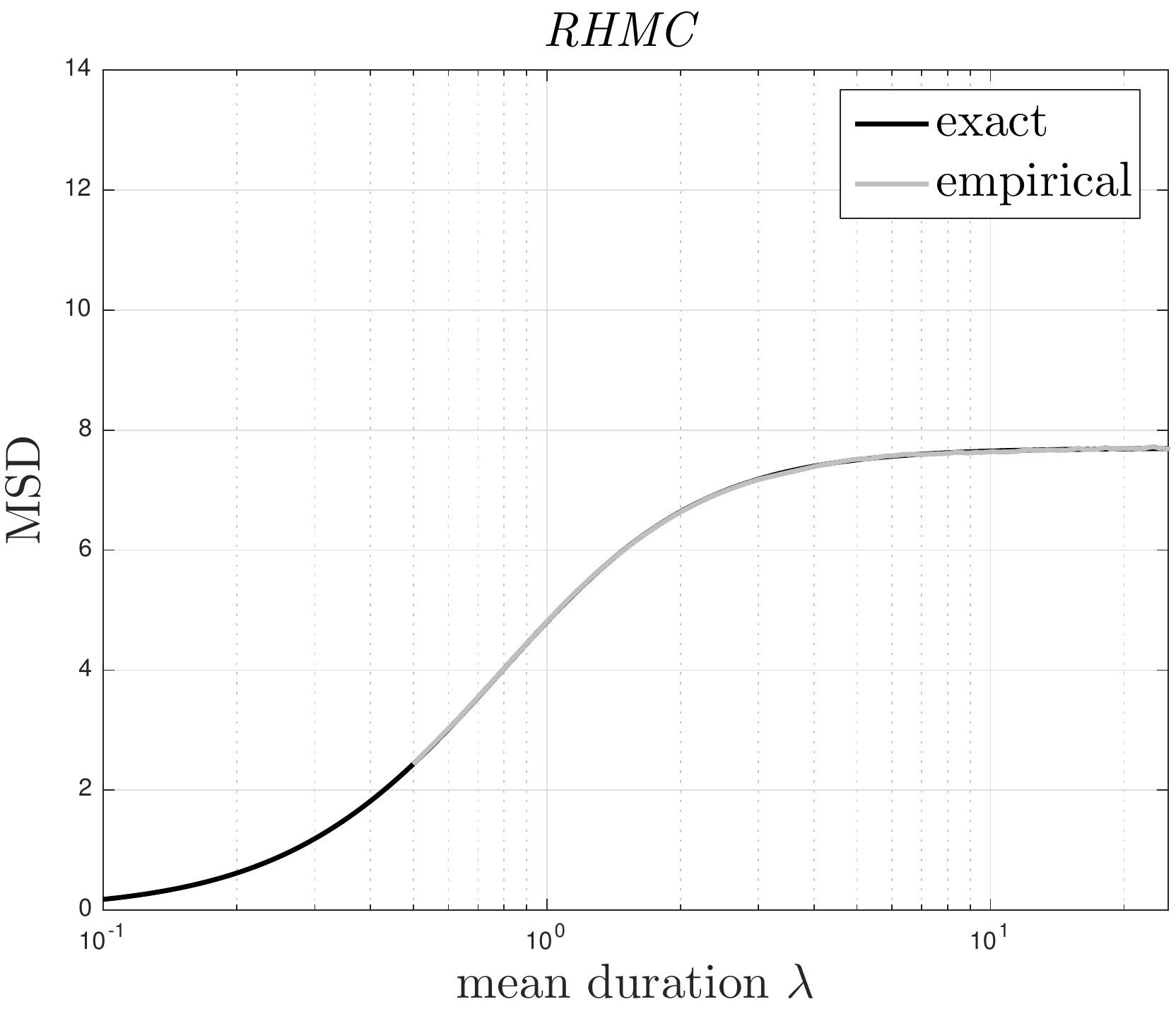}
\end{center}
\caption{ \small {\bf MSD for Ten-Dimensional Normal Distribution.}
The solid black lines in the left (resp.~right) panel plot the formulas given in \eqref{eq:hmc_msd} (resp.~\eqref{eq:rhmc_msd}) as a function of the duration (resp.~mean duration) $\lambda$.  The solid grey lines in the left (resp.~right) panel plot an approximation of  $\MSD^{\HMC}$ (resp.~$\MSD^{\RHMC}$) associated to the time series: $\{ q_i(t_k)   \}_{1 \le k \le 10^6}$ vs.~$\lambda$ for $1 \le i \le D$.   }
 \label{fig:msd_gaussian_nd}
\end{figure}


\subsection{Two-dimensional Double Well}

Consider a Brownian particle with the following two-dimensional double-well potential energy function \begin{equation} \label{eq:dw_2d}
\Phi(x_1, x_2)=5 (x_2^2-1)^2 + 1.25 \left( x_2 - \frac{x_1}{2} \right)^2   \;.
\end{equation} whose contours  are displayed in Figure~\ref{fig:dw_2d}.  The left (resp.~right) panel of Figure~\ref{fig:iac_dw_2d} plots the IAC $\tau$ vs duration $\lambda$ for estimating the mean of the time series $\{ f(Q_1(t_{i-1}),Q_2(t_{i-1})) \}_{1 \le i \le 10^6}$ where $f(x,y) = 2 x + y$ produced by a HMC (resp.~RHMC) scheme.  This test function is the dot product between the configuration vector $( Q_1(t_{i-1}),Q_2(t_{i-1}) )$ and the line connecting the two wells or $\operatorname{span}((2,1))$.  For the same output trajectory, the left (resp.~right) panel of Figure~\ref{fig:msd_dw_2d} plots an estimate of the equilibrium mean-squared displacement produced by a HMC (resp.~RHMC) scheme.


\begin{figure}
\begin{center}
\includegraphics[width=0.5\textwidth]{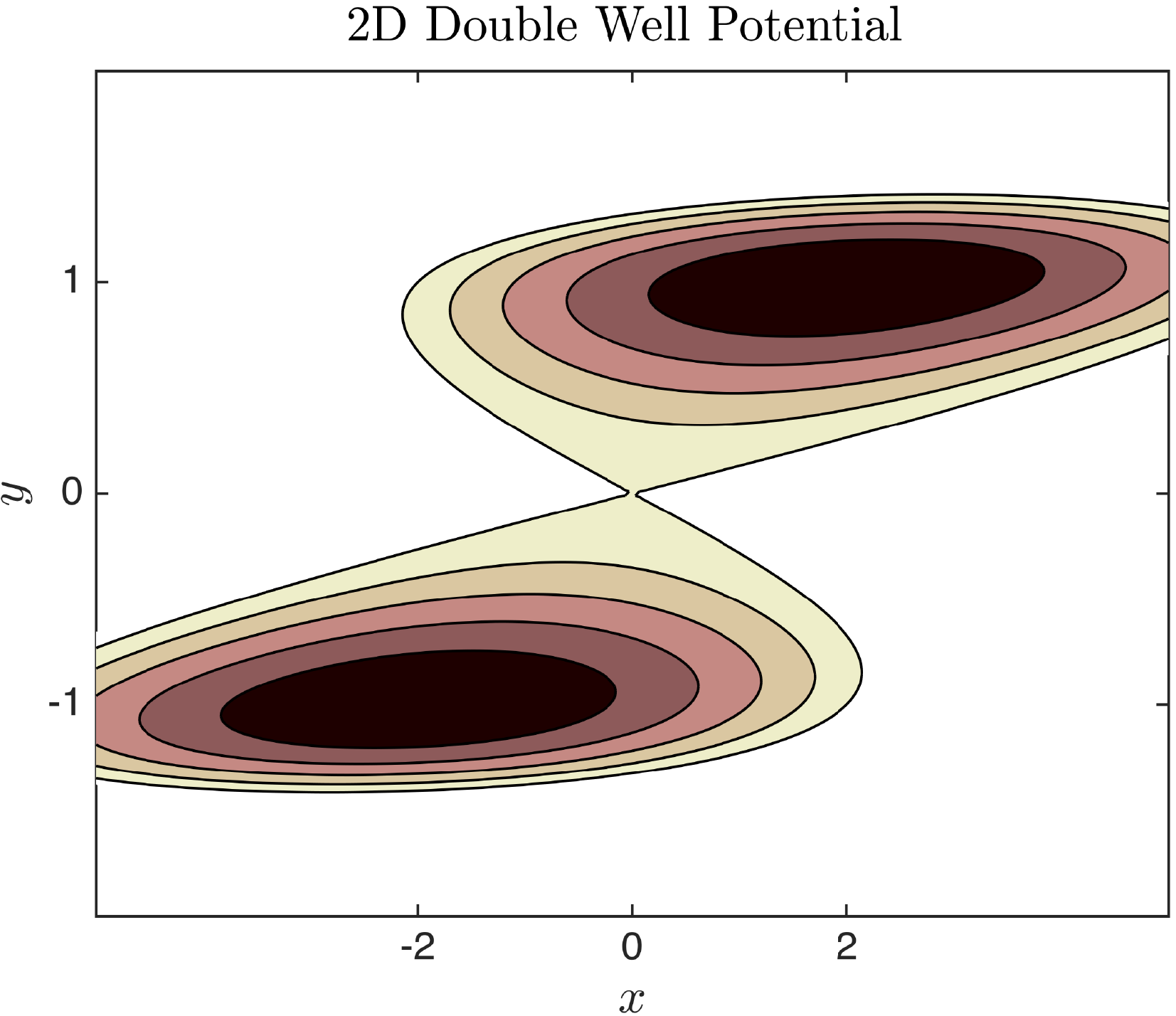}
\end{center}
\caption{ \small {\bf Contours of $\Phi(x)$ in \eqref{eq:dw_2d}.}  This potential energy function has local minima located at $x^{\pm} = (\pm 2, \pm 1)$ with $\Phi(x^+) = \Phi(x^-)$,
and a saddle point at the origin.  }
 \label{fig:dw_2d}
\end{figure}

\begin{figure}
\begin{center}
\includegraphics[width=0.45\textwidth]{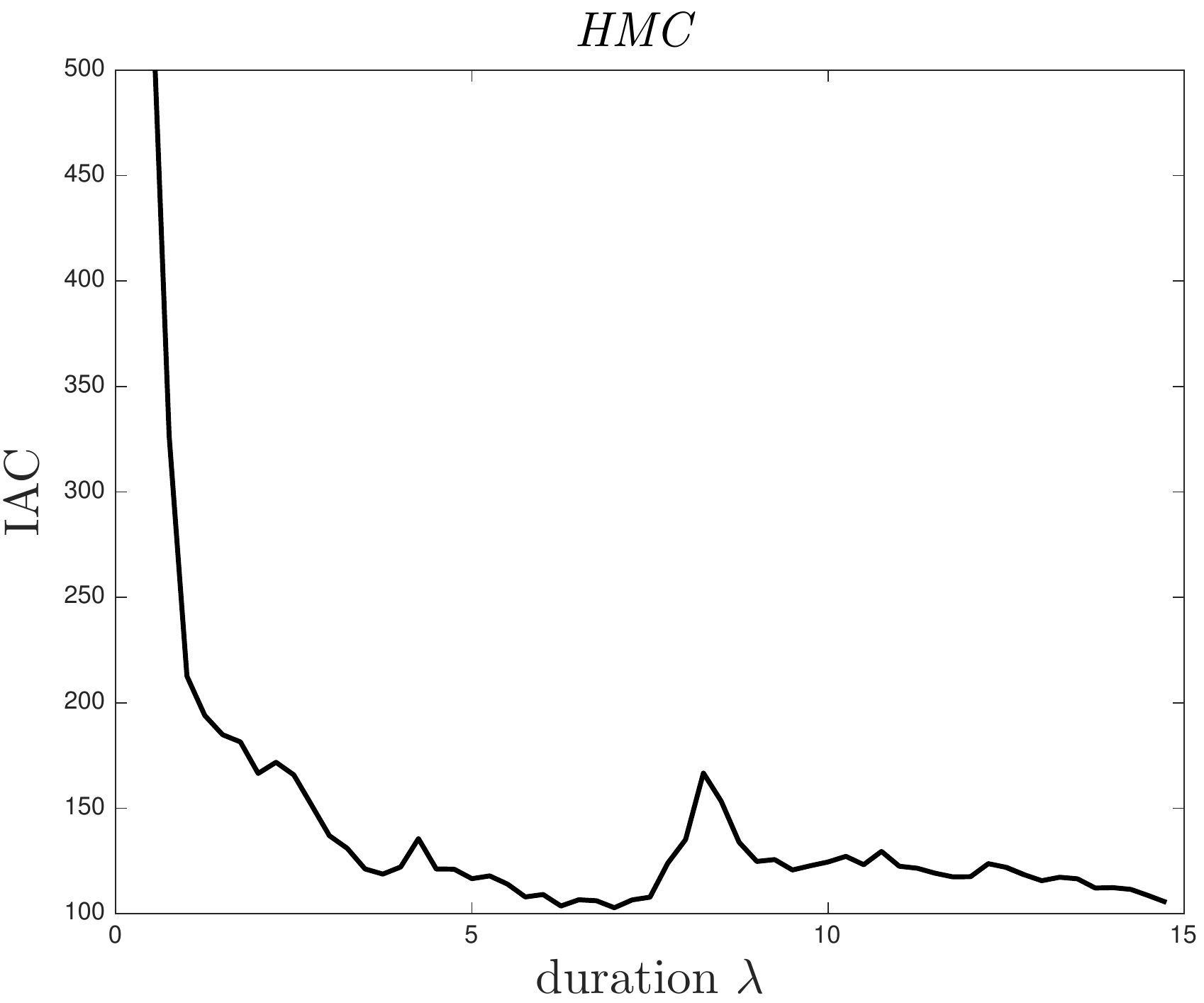}
\includegraphics[width=0.45\textwidth]{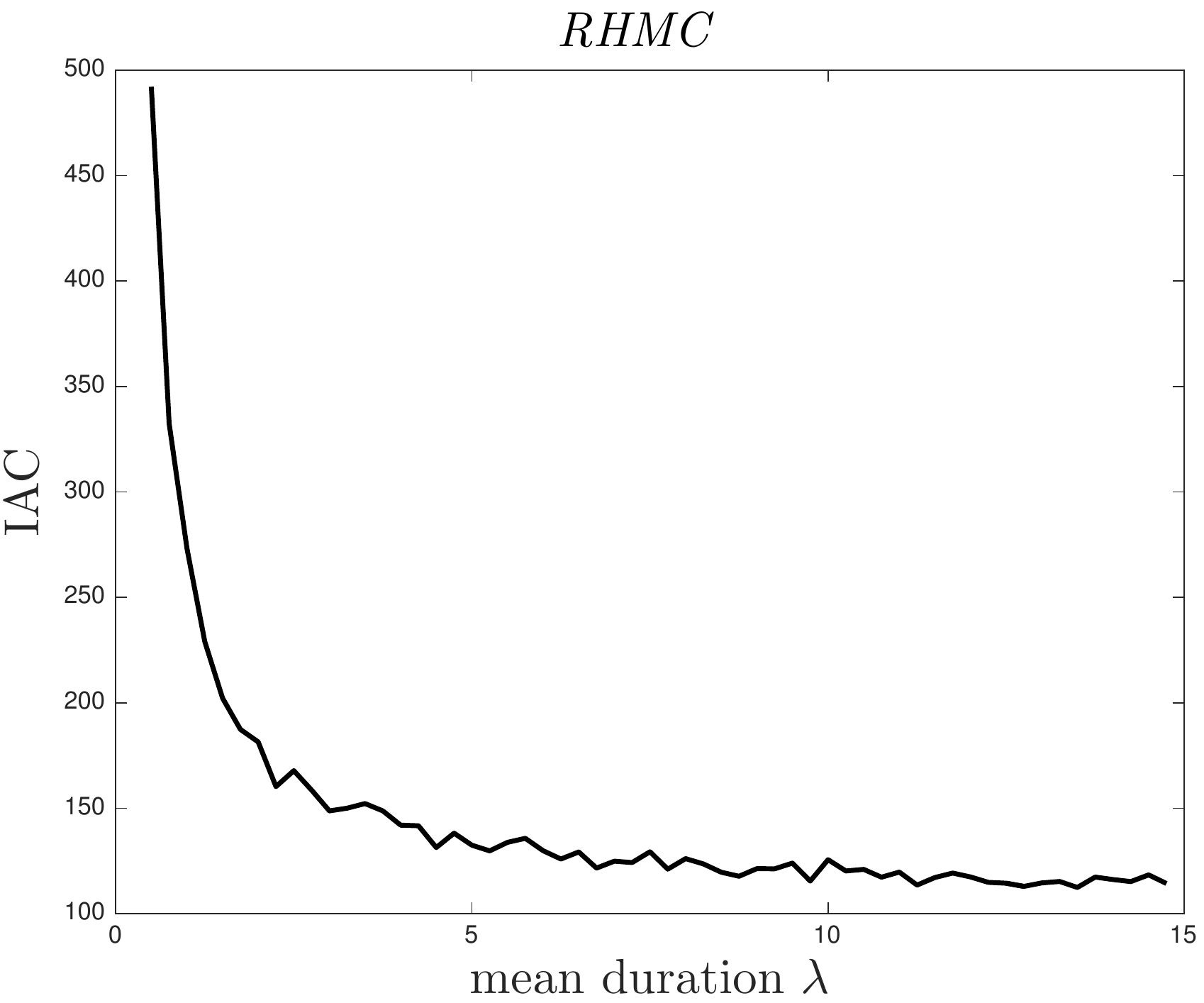}
\end{center}
\caption{ \small {\bf IAC for 2D Double Well.} The left (resp.~right) panel plot an estimate of the IAC associated to the time series $\{  2 Q_1(t_{i-1}) + Q_2(t_{i-1})  \}_{1 \le i \le 10^6}$ vs.~duration (resp.~mean duration) $\lambda$ for a HMC (resp.~RHMC) method applied to the potential energy function graphed in Figure~\ref{fig:dw_2d}.   }
 \label{fig:iac_dw_2d}
\end{figure}

\begin{figure}
\begin{center}
\includegraphics[width=0.45\textwidth]{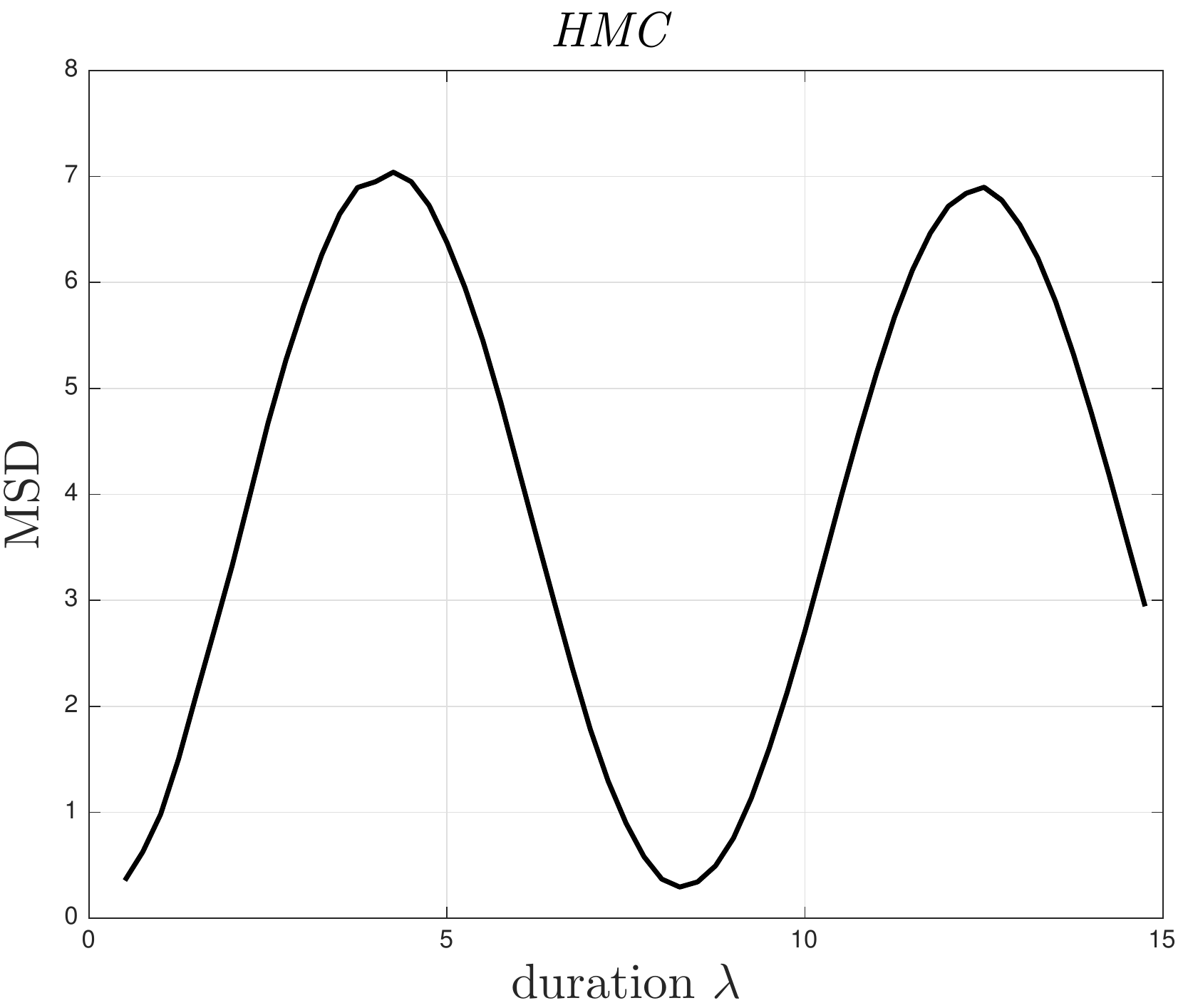}
\includegraphics[width=0.45\textwidth]{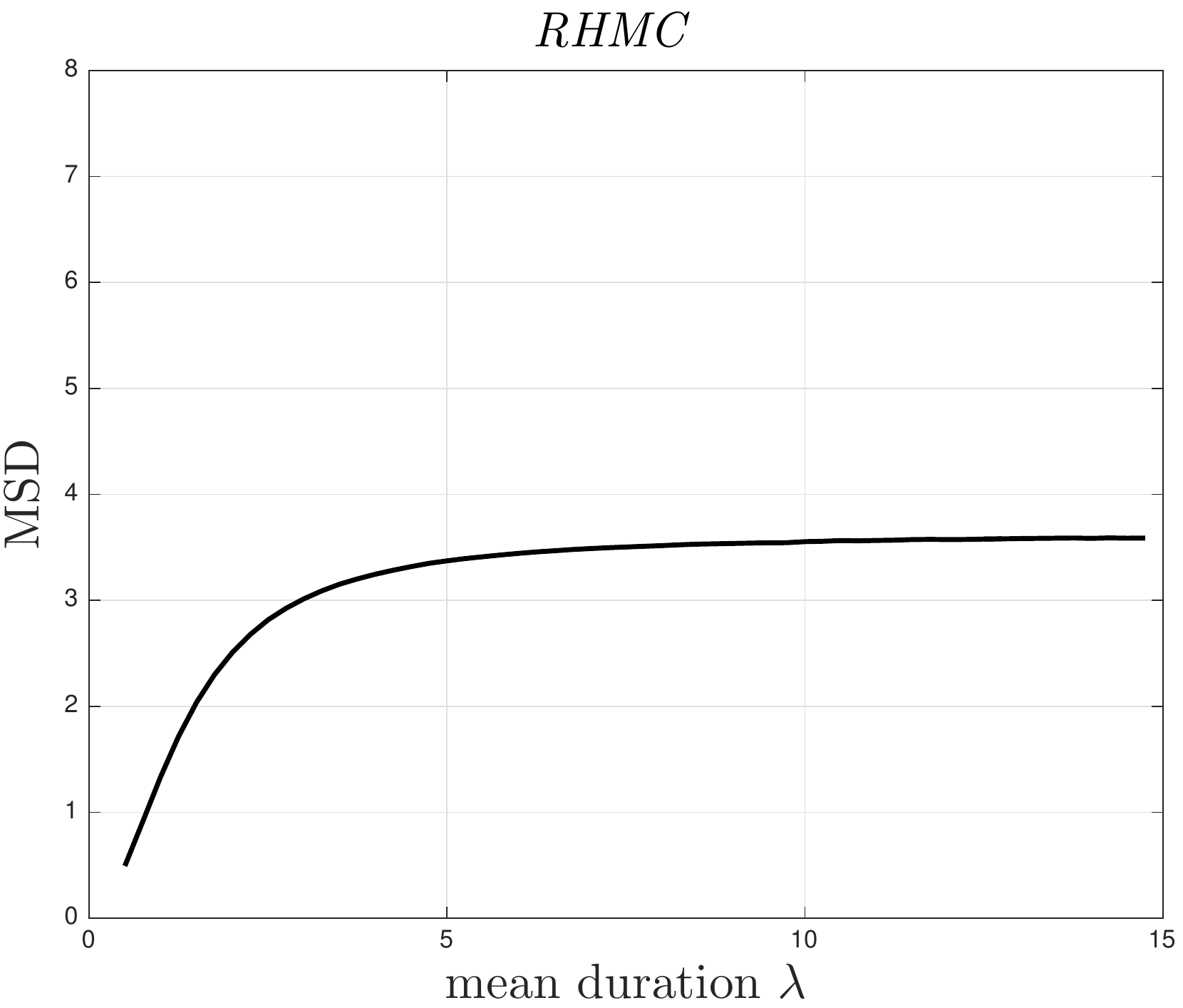}
\end{center}
\caption{ \small {\bf MSD for 2D Double Well.} The left (resp.~right) panel plot an estimate of the equilibrium MSD vs.~duration (resp.~mean duration) $\lambda$ for a HMC (resp.~RHMC) method applied to the potential energy function graphed in Figure~\ref{fig:dw_2d}. }
 \label{fig:msd_dw_2d}
\end{figure}


\subsection{Fifteen-dimensional Pentane Molecule}

Consider sampling from the equilibrium distribution of a chain of five particles with a potential energy that is a function of bond lengths, bond angles, dihedral angles, and inter-particles distances.
This potential energy is described in greater detail in \cite{CaLeSt2007}, which is based on the model described in \cite{MaSi1998}.
Figure~\ref{fig:pentane_conformations} illustrates and describes the three types of minima of the potential energy for the parameter values selected. Due to symmetries, each type corresponds to several  modes of the distribution.  For instance, there is  stable configuration where particles 1 and 5 are above the plane of particles 2, 3, 4 and another of the same type where they are below.
Figure~\ref{fig:iac_pentane_15d} (resp.~\ref{fig:msd_pentane_15d}) compares the IAC (resp.~MSD) of HMC and RHMC.
For this moderate dimensional distribution, we again observe a complex dependence of the performance of HMC on $\lambda$.


\begin{figure}
\begin{center}
\includegraphics[width=0.5\textwidth]{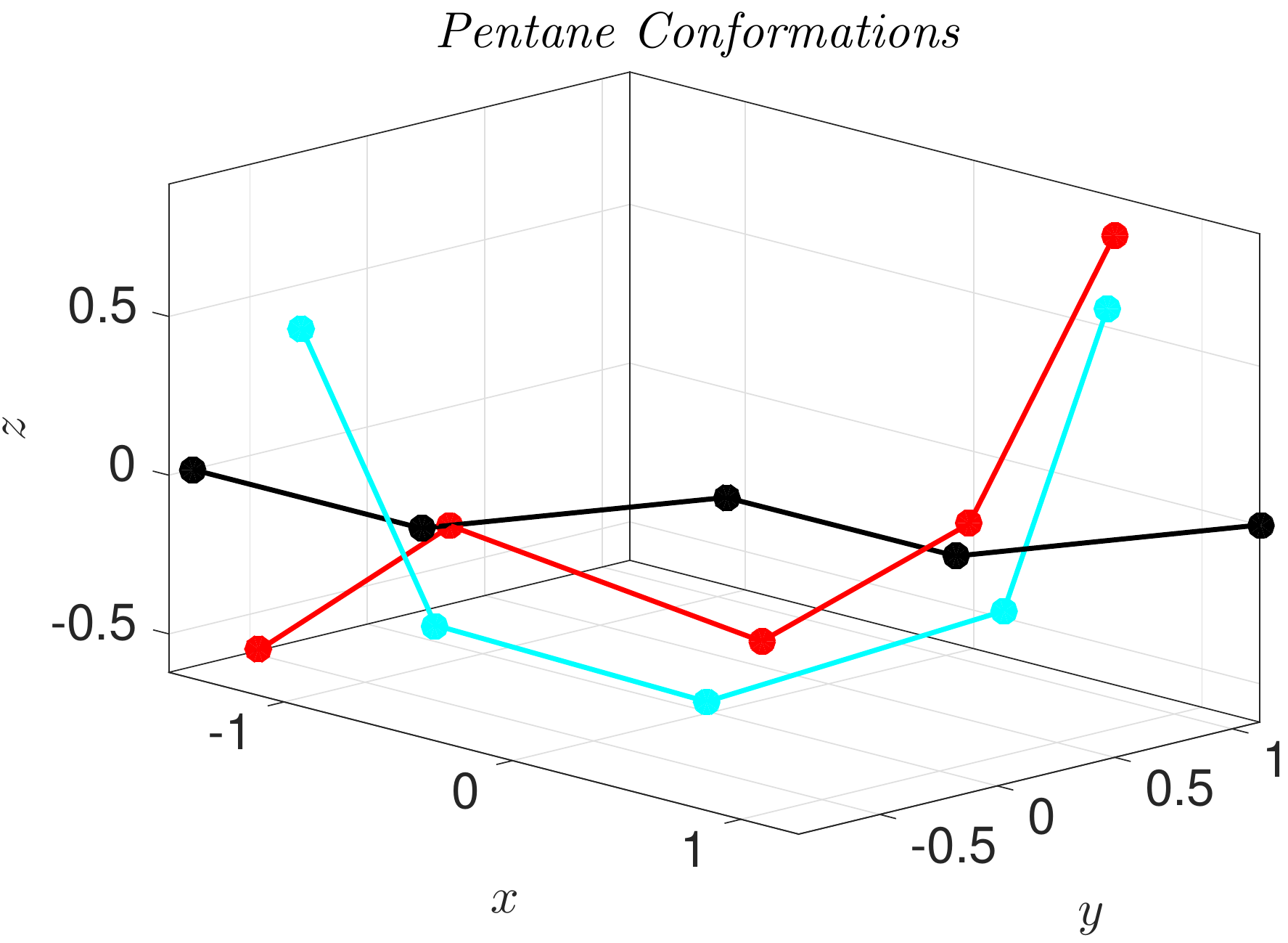}
\end{center}
\caption{ \small {\bf 15D Pentane Molecule.}  This figure shows the three types of potentially stable conformations of the pentane molecule.
}
 \label{fig:pentane_conformations}
\end{figure}

\begin{figure}
\begin{center}
\includegraphics[width=0.45\textwidth]{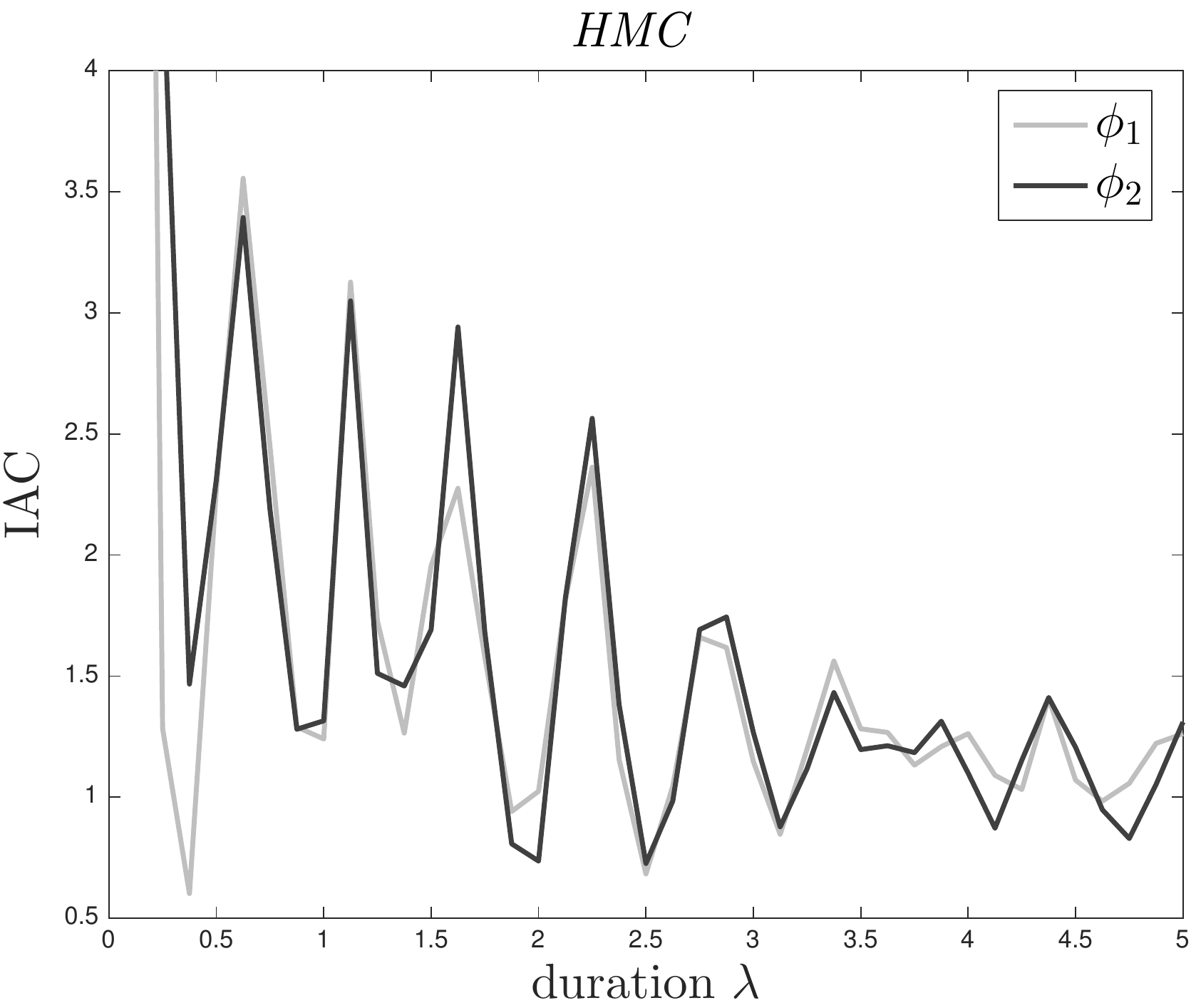}
\includegraphics[width=0.45\textwidth]{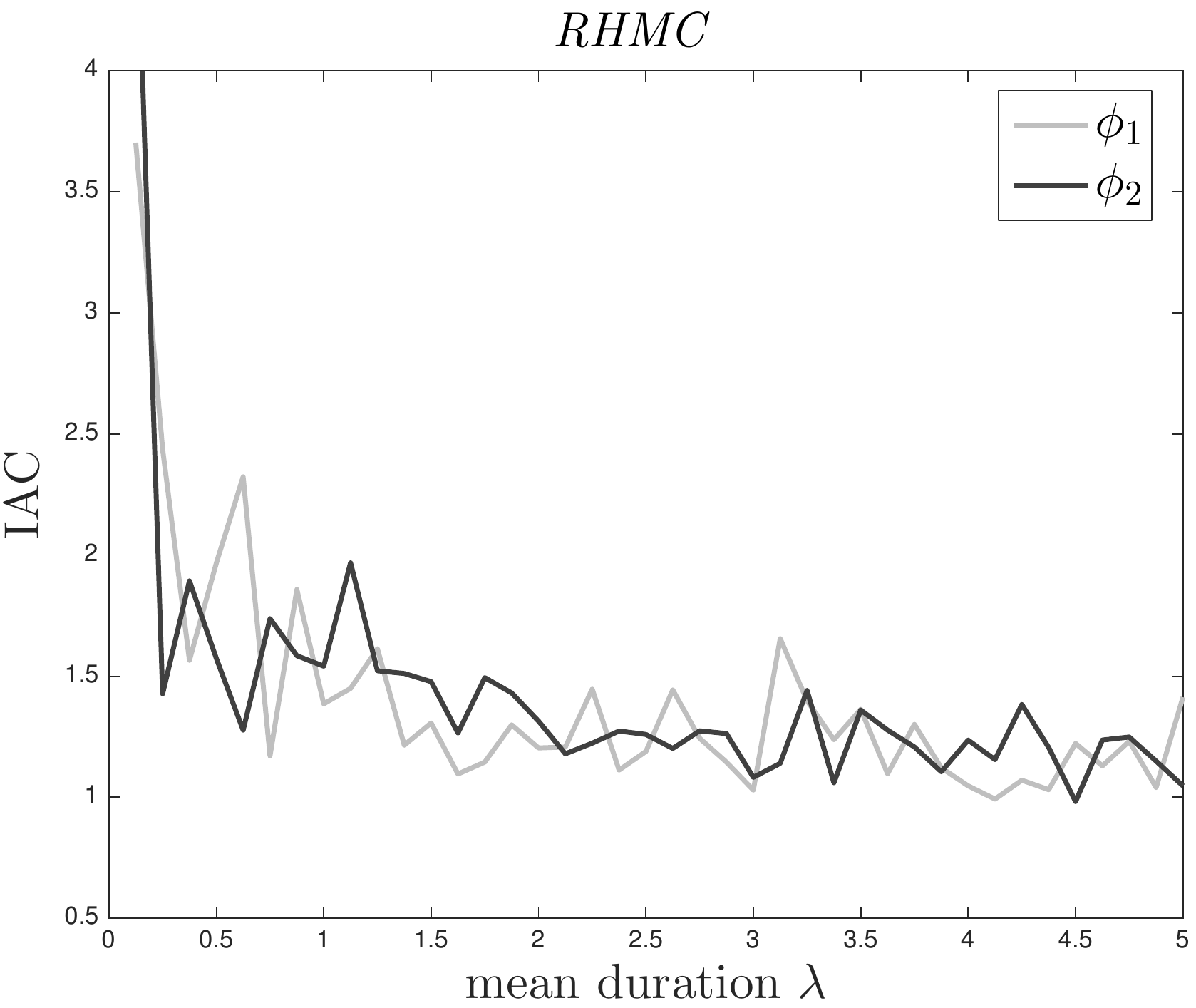}
\end{center}
\caption{ \small {\bf IAC for 15D Pentane.} The left (resp.~right) panel plot an estimate of the IAC associated to the dihedral angles (labelled $\phi_1$ and $\phi_2$ in the figure legends) vs.~duration (resp.~mean duration) $\lambda$ for a HMC (resp.~RHMC) method applied to the potential energy function of the pentane molecule  described in the text.
The time series each consist of $10^4$ samples.
}
 \label{fig:iac_pentane_15d}
\end{figure}

\begin{figure}
\begin{center}
\includegraphics[width=0.45\textwidth]{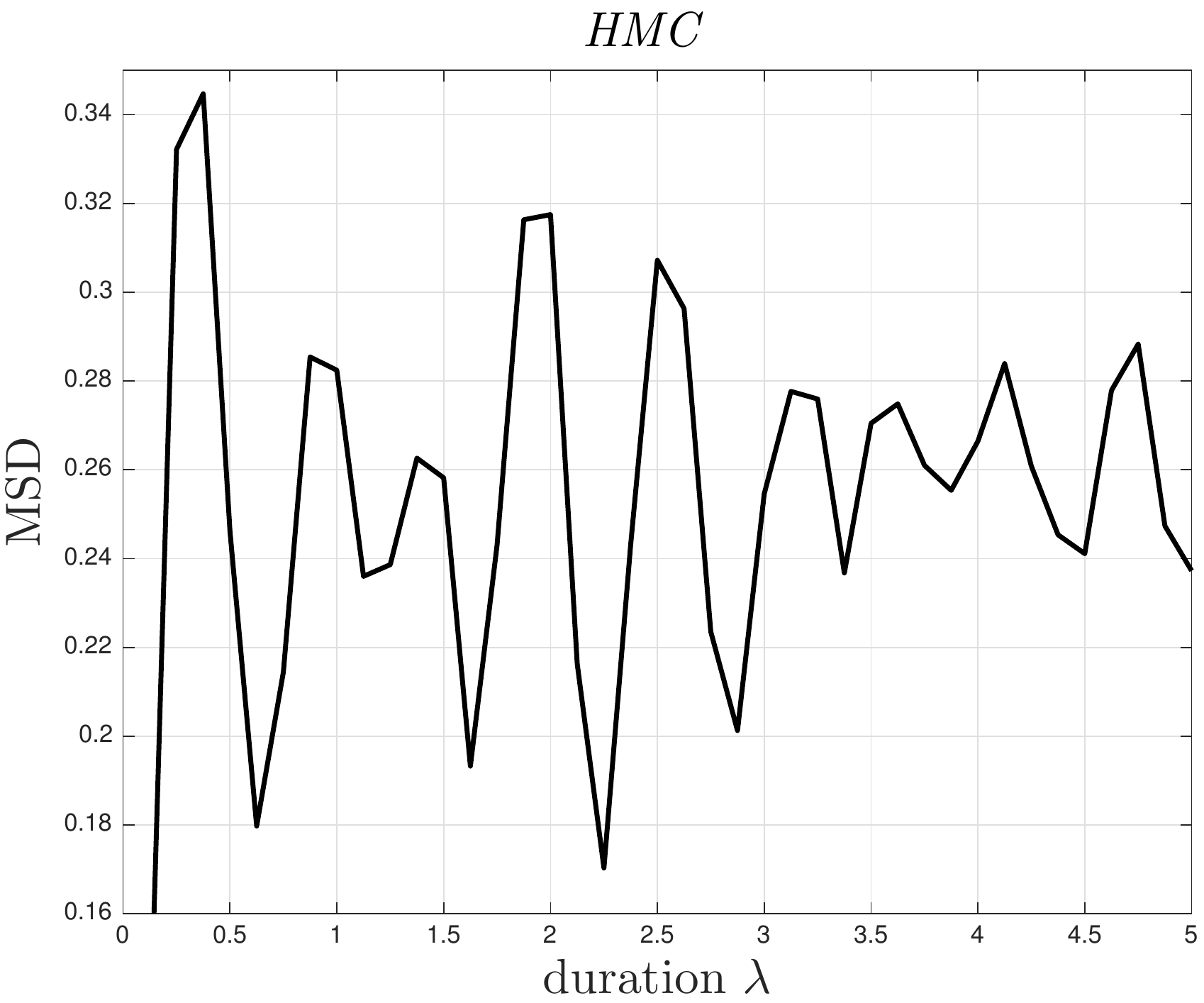}
\includegraphics[width=0.45\textwidth]{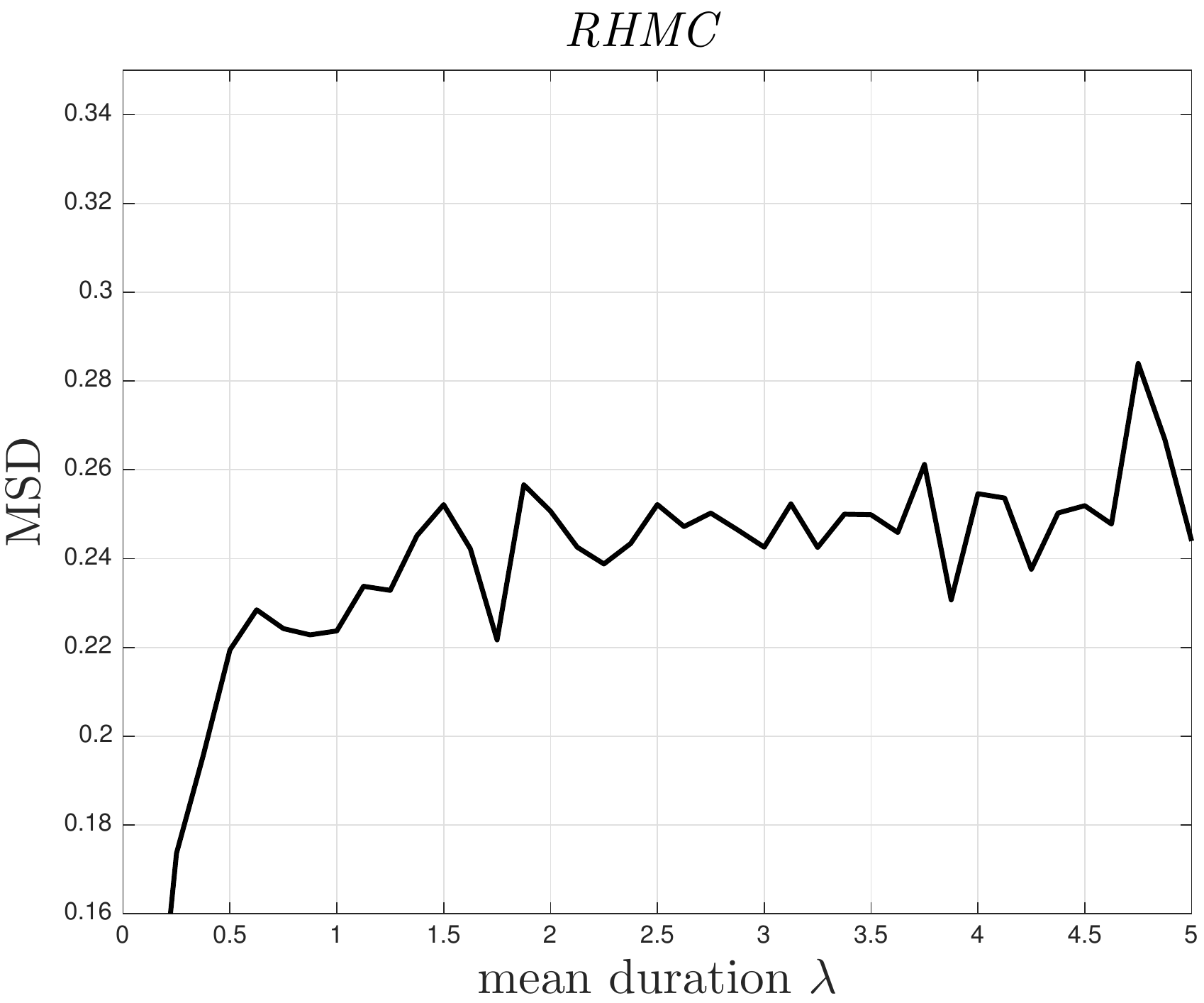}
\end{center}
\caption{ \small {\bf MSD for 15D Pentane.} The left (resp.~right) panel plot an estimate of the equilibrium MSD vs.~duration (resp.~mean duration) $\lambda$ for a HMC (resp.~RHMC) method applied to the potential energy function of the pentane molecule  described in the text.
The time series each consist of $10^4$ samples.
 }
 \label{fig:msd_pentane_15d}
\end{figure}


\clearpage

\section{Outlook} \label{sec:outlook}

All of the preceding developments have taken place in the exact integration scenario. The most obvious modification of Algorithm \ref{algo:rhmc} that takes into account integration errors approximates the Hamiltonian flow by a volume-preserving, reversible integrator (such as Verlet) with time step $\Delta t$, updates time via $t_1= t_0+M \Delta t$, where  $M$ is a random number of time-steps that is geometrically distributed with mean $\lambda/\Delta t$, and uses an accept-reject mechanism to remove the bias due to the energy errors introduced by this integrator. The study of that algorithm involves many numerical analysis technicalities, as it is necessary to derive global error bounds for the numerical solution that are valid over integration legs of arbitrarily long length $M\Delta t$. Such developments do not have much relation with the mathematical techniques used here so far and are out of the scope of the present article.

In this section we  suggest two variants of Algorithm \ref{algo:rhmc} that do not use the exact solution of the Hamiltonian dynamics. These variants are based on approximating the gradients in \eqref{eq:generator} with the help of a numerical integrator and are therefore similar to the recently introduced generalized Markov Chain Approximation Methods (MCAM)  \cite{KuDu2001, BoVa2016}. We shall not be concerned with  comparing  the efficiency  of the modifications of Algorithm \ref{algo:rhmc} introduced here with that of standard HMC and related techniques.

In what follows, for a given numerical integrator, we denote by $\theta_{\Delta t}$ the map that advances the solution of Hamilton's equations over a single time-step of length $\Delta t$; a typical example is provided by the (velocity) Verlet integrator $(q_1,p_1) = \theta_{\Delta t}(q,p)$:
\[
\begin{pmatrix} q_1 \\ 
p_1
\end{pmatrix} = \begin{pmatrix} q + \Delta t p - \dfrac{(\Delta t)^2}{2} \nabla \Phi(q) \\ 
p - \dfrac{\Delta t}{2} ( \nabla \Phi(q) + \nabla \Phi(q_1) ) \end{pmatrix} \;.
\]

\subsection{Variant \#1} This variant is defined by the following approximation to the generator \eqref{eq:generator}  of RHMC
\[
 L_h^{(1)}f(q,p) = \lambda^{-1} \E \left\{ f(  \Gamma (q,p) ) - f(q,p) \right\}  + h^{-1}\left( f(\theta_h(q,p)) - f(q,p) \right),
\]
where $h>0$ is a parameter; since the Liouville operator acting on a test function $f$ represents the time-derivative of $f$ along the solutions of Hamilton's equations, it is clear that, for consistent integrators and smooth test functions,
\[
 L_h^{(1)} f(q,p) = L f(q,p) + O(h) \;.
\]
The random jump times and the embedded chain of the Markov process generated by $ L_h^{(1)}$ can be produced by iterating the following algorithm.

\begin{algorithm} \label{algo:approx_1}
Given a duration parameter $\lambda > 0$, a step length parameter $h>0$, the current time $t_0$, and the current state $( \tilde Q(t_0), \tilde P(t_0) ) \in \mathbb{R}^{2D}$,  output an updated state  $( \tilde Q(t_1), \tilde P(t_1) )$ at the random time $t_1$ using two steps
\begin{description}
\item[Step 1] Generate an exponential random variable $\delta t$ with mean $h \lambda / (h + \lambda)$, and update time via $t_1 = t_0 + \delta t$.
\item[Step 2] Generate a uniform random variable $u \sim U(0,1)$ and set \[
( \tilde Q(t_1), \tilde P(t_1) ) = \begin{dcases}
\Gamma(\tilde Q(t_0), \tilde P(t_0) ) & u \le \frac{h}{h + \lambda} \\
\theta_h(\tilde Q(t_0), \tilde P(t_0) )  & \text{otherwise}
\end{dcases}
\]
where $\Gamma$ is the momentum randomization map given in \eqref{eq:momentum_randomization}.
\end{description}
\end{algorithm}

 The random variables $\delta t$, $u$, and the random vector $\xi$ in the momentum randomization map $\Gamma$ are independent. In terms of the sequence of random jump states $\{ (\tilde Q(t_i), \tilde P(t_i) ) \}$ and random jump times $\{ t_i \}$,
 the trajectory of the process is:
 \[
(Q(t), P(t)) = ( \tilde Q(t_i), \tilde P(t_i) )  \quad \text{for  $t \in [t_i, t_{i+1})$} \;.
\]
For any $t>0$, the time-average of a function $f: \mathbb{R}^{2D} \to \mathbb{R}$ along this trajectory is  given by: \[
\int_0^t f(Q(s), P(s)) ds = \sum_{0 \le i \le \infty} f(Q(t_i), P(t_i) ) (t \wedge t_{i+1} - t \wedge t_i )\;.
\]
The mean holding time of this Markov jump process is constant and given by \begin{equation}\label{eq:ht}
\E \delta t = \frac{h \lambda}{h + \lambda} \;.
\end{equation}  If $\lambda$ is large and $h$ is small, this process mainly jumps from $(q,p)$ to $\theta_h(q,p)$, with occasional randomizations of momentum; in other words, the jump states come from integration legs of the Hamiltonian dynamics interspersed with occasional momentum randomizations. Note that, while the  holding times $\delta t$ are random, the step-length parameter  in the numerical integrator remains constant.
Figure~\ref{fig:mcam_rhmc_gaussian_1d} illustrates the use of the algorithm  in the case a one-dimensional standard normal target distribution.

\begin{figure}[ht!]
\begin{center}
\includegraphics[width=0.45\textwidth]{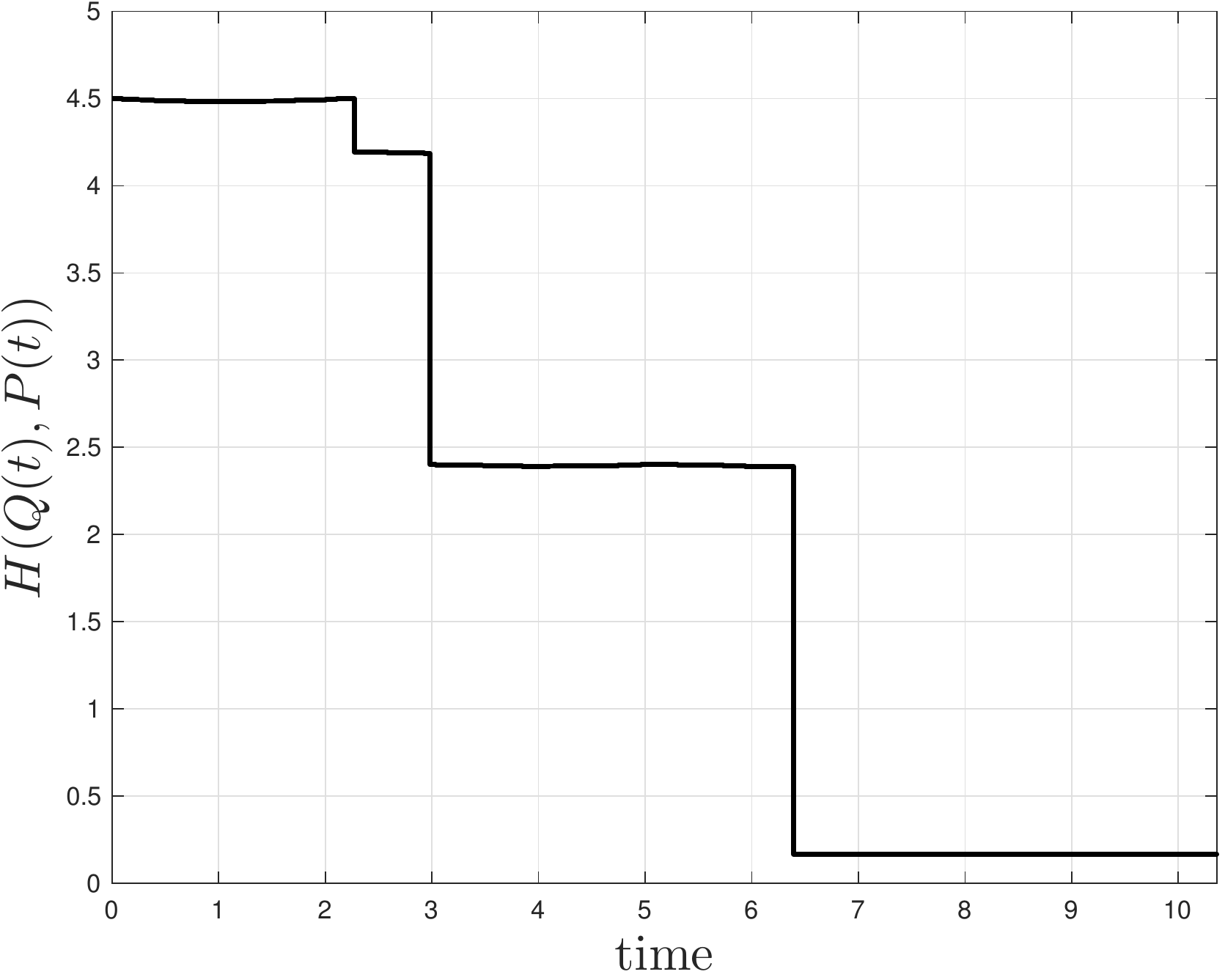}
\includegraphics[width=0.45\textwidth]{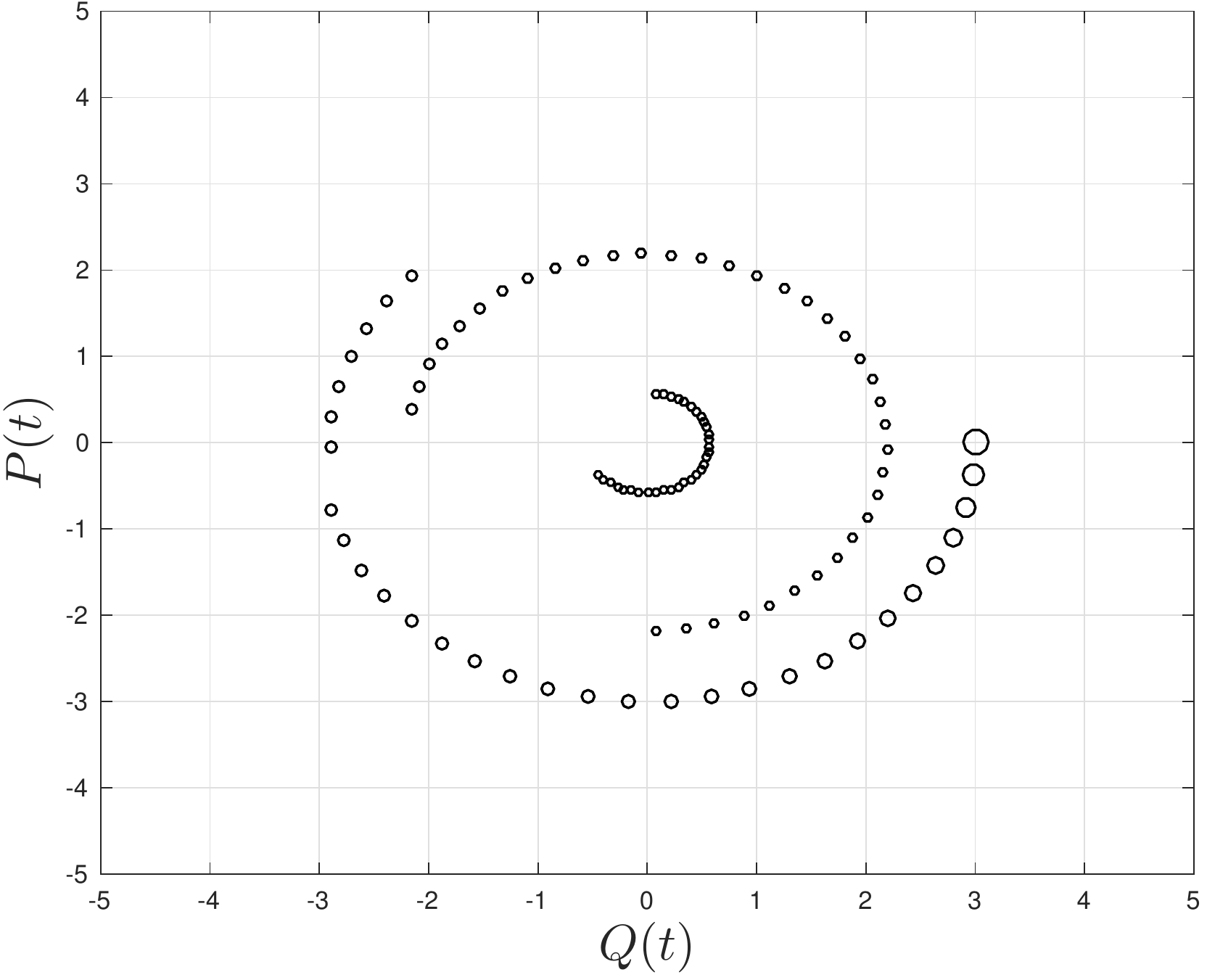} \\
\includegraphics[width=0.45\textwidth]{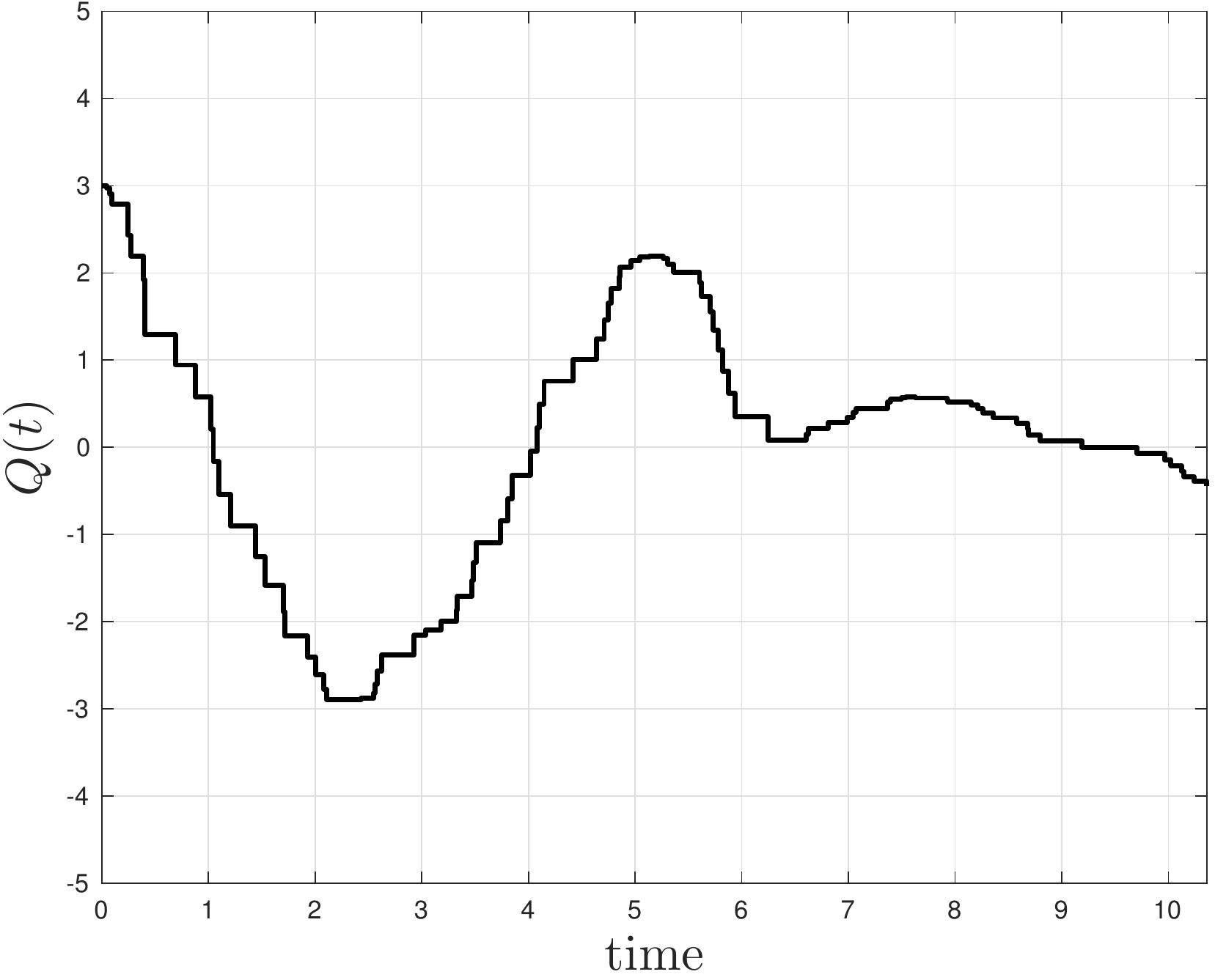}
\includegraphics[width=0.45\textwidth]{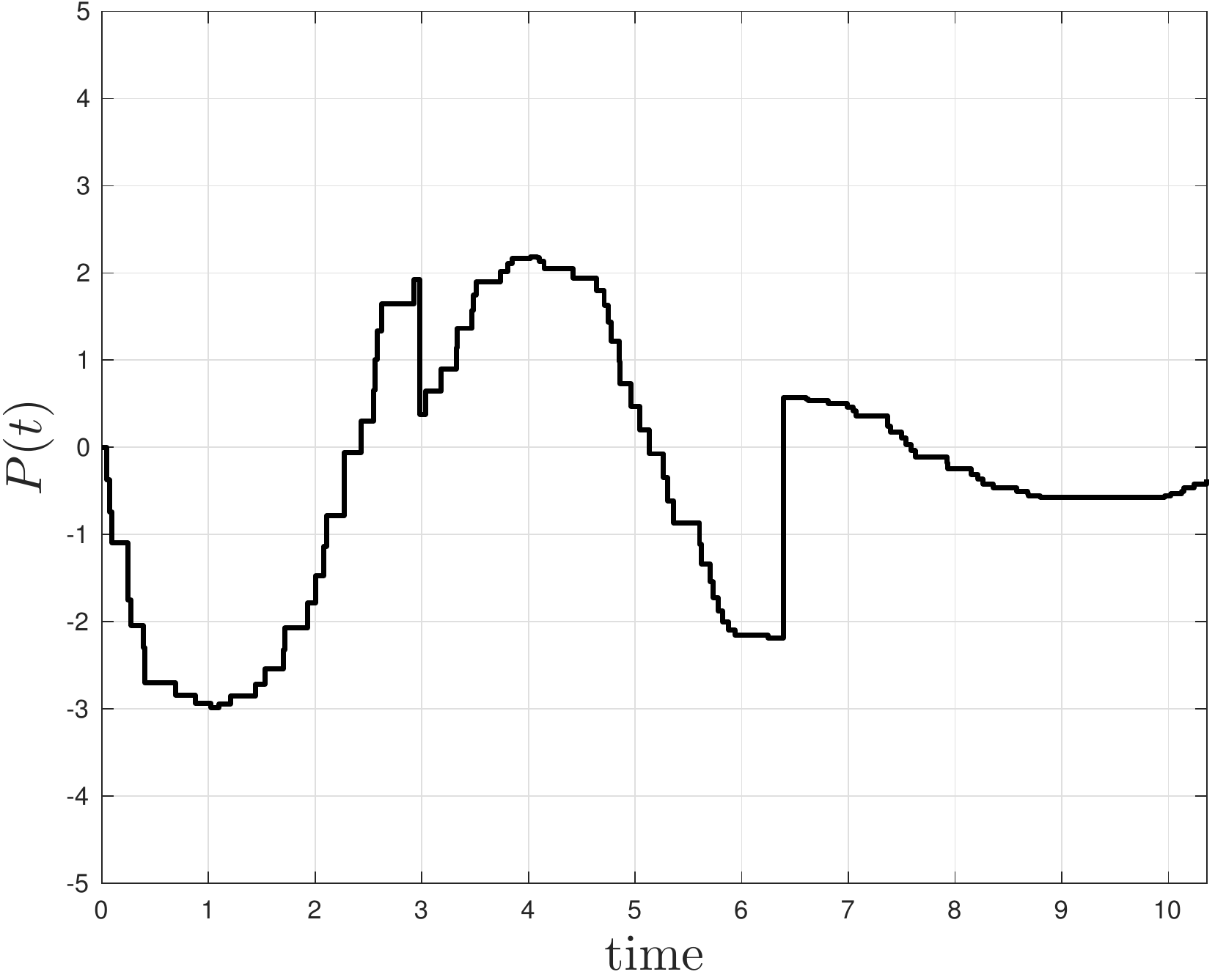}
\end{center}
\caption{ \small {\bf Sample Paths of Variant \#1.}
These figures show a realization produced by iterating Algorithm~\ref{algo:approx_1} in the case of a one-dimensional normal target distribution. The  step length is $h=0.125$, and the duration $\lambda = \pi$ (these values are chosen for visualization purposes only).  The top left panel shows the evolution of the total energy as a function of time.  Note
that there have been three momentum randomizations and that, between them,
 the total energy is essentially constant.  The top right panel shows the evolution of the discrete trajectory in phase space.  The size of the markers is related to $t$: points along the trajectory corresponding to larger values of $t$  have smaller markers.  The bottom panels show the position and momentum as functions of time.  The holding time in each state is random, and $Q(t)$ and $P(t)$ are piecewise constant in time.  
 }
 \label{fig:mcam_rhmc_gaussian_1d}
\end{figure}

\clearpage

 We now show that the generator $L_h^{(1)}$ inherits the stochastic Lyapunov function of $L$. The hypotheses on the potential energy that we use imply that the tails of the target
 $\exp(-\Phi)$ are no lighter or heavier than those of a Gaussian distribution; it is likely that those hypotheses may be relaxed. The hypotheses on $\theta_h$ are satisfied by any reasonable integrator such as the Euler rule or the Verlet method.

\begin{prop} \label{prop:tilde_Lh}
Suppose that, in addition to  Hypothesis~\ref{hypothesis_on_phi}, the potential energy is twice-differentiable and satisfies the following conditions.
\begin{enumerate}
\item There exists $C>0$ such that for all $q \in \mathbb{R}^D$: \[
 \| D^2 \Phi(q) \| \le C \;.
 \]
\item There exists $C>0$ such that for all $q \in \mathbb{R}^D$: \[
\Phi(q) \ge C (1 + |q|^2 ) \;.
 \]
\end{enumerate}
Furthermore assume that the integrator is such that,
for any
 $f \in C^2(\mathbb{R}^{2D}, \mathbb{R})$ with the property that there exists $C_1>0$ such that \[
 \| D^2 f (q,p) \|  \le C_1 \quad \forall (q,p) \in \mathbb{R}^{2D} \;,
\]   there exists a  constant $C_2(f)>0$ such that  \[
f(\theta_h(q,p)) =  f(q,p) + h \mathcal{L} f(q,p) + R_f(q,p,h) \;,
\] where  \[
|R_f(q,p,h) | \le C_2(f) (1 + H(q,p))  h^2 \quad \forall (q,p) \in \mathbb{R}^{2D} \;.
\]
Then, for $h$ sufficiently small, there exist $\gamma_h>0$ and $K_h \ge 0$ such that: 
\[
L_h^{(1)} V(q,p) \le - \gamma_h V(q,p) + K_h \quad \forall (q,p) \in \mathbb{R}^{2D} \;.
\]
\end{prop}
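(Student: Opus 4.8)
The plan is to exhibit $L_h^{(1)}V$ as a small perturbation of the generator $L$ applied to the same function $V$, and then to absorb the perturbation into the strictly dissipative part of the drift estimate already obtained in Lemma~\ref{lemma:rhmc_drift_condition}. The first observation is that the momentum randomization operator occurring in $L_h^{(1)}$ is \emph{exactly} the one occurring in $L$, so only the transport part is modified; writing $\mathcal{L}$ for the Liouville operator of \eqref{eq:generator} one has
\[
L_h^{(1)}V(q,p) = LV(q,p) + h^{-1}\bigl(V(\theta_h(q,p)) - V(q,p) - h\,\mathcal{L}V(q,p)\bigr).
\]
To control the bracketed one-step error I would apply the assumed integrator estimate to $f=V$. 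This is admissible because
\[
D^2 V(q,p) = \begin{pmatrix} c_2 I + D^2\Phi(q) & c_1 I \\ c_1 I & I \end{pmatrix},
\]
whose norm is bounded by the first additional hypothesis on $\Phi$ together with the explicit quadratic terms; hence $V$ satisfies the hypothesis on $f$, and there is $C_2(V)>0$ with $|V(\theta_h(q,p)) - V(q,p) - h\,\mathcal{L}V(q,p)| \le C_2(V)(1+H(q,p))h^2$. Combining, $L_h^{(1)}V(q,p) \le LV(q,p) + C_2(V)(1+H(q,p))\,h$.

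Next I would substitute the explicit formula \eqref{eq:LV} from the proof of Lemma~\ref{lemma:rhmc_drift_condition}, namely $LV = -2c_1\bigl(\tfrac{|p|^2}{2} + \tfrac12\langle\nabla\Phi(q),q\rangle\bigr) + \tfrac{D}{2}\lambda^{-1}\sin^2\phi$. The remaining task is to dominate the error term $C_2(V)(1+H(q,p))h$ by the dissipation $2c_1\bigl(\tfrac{|p|^2}{2}+\tfrac12\langle\nabla\Phi(q),q\rangle\bigr)$, and this is precisely where Hypothesis~\ref{hypothesis_on_phi}~{\bf H2} enters: the $|q|^2$-coefficient there is nonnegative, so $\tfrac12\langle\nabla\Phi(q),q\rangle \ge b\Phi(q)-a$, whence (using $b<1$ on the kinetic term)
\[
1 + H(q,p) \le \frac1b\Bigl(\tfrac{|p|^2}{2} + \tfrac12\langle\nabla\Phi(q),q\rangle\Bigr) + \Bigl(1+\tfrac{a}{b}\Bigr).
\]
Feeding this back in gives $L_h^{(1)}V(q,p) \le -\bigl(2c_1 - \tfrac{C_2(V)h}{b}\bigr)\bigl(\tfrac{|p|^2}{2}+\tfrac12\langle\nabla\Phi(q),q\rangle\bigr) + K_h'$ with $K_h' = \tfrac{D}{2}\lambda^{-1}\sin^2\phi + C_2(V)h(1+\tfrac ab)$. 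Choosing $h \le bc_1/C_2(V)$ keeps the leading coefficient $\ge c_1>0$, and then invoking the inequality \eqref{eq:v2}, $bV(q,p) \le \tfrac{|p|^2}{2}+\tfrac12\langle\nabla\Phi(q),q\rangle + a$, converts the bound into $L_h^{(1)}V \le -\gamma_h V + K_h$ with $\gamma_h = c_1 b > 0$ and $K_h = c_1 a + K_h' \ge 0$.

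The genuinely delicate point — the main obstacle — is this absorption step: the integrator's one-step error grows linearly in the energy, like $h\,H(q,p)$, and it must be swallowed by the dissipation $-2c_1\bigl(\tfrac{|p|^2}{2}+\tfrac12\langle\nabla\Phi(q),q\rangle\bigr)$, which only controls $\Phi(q)$ through {\bf H2}; keeping the resulting coefficient positive is what forces $h$ below an explicit threshold depending on $b$, $c_1$ and $C_2(V)$. The first additional hypothesis on $\Phi$ is used only to make $D^2V$ bounded so that the integrator estimate applies to $f=V$, and the quadratic lower bound (second hypothesis) serves to make $V$ and $1+H$ comparable, which is convenient here and in the subsequent analysis of this variant; everything else is bookkeeping of constants.
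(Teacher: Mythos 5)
Your argument is correct and follows essentially the same route as the paper: decompose $L_h^{(1)}V = LV + h^{-1}\bigl(V(\theta_h)-V-h\,\mathcal{L}V\bigr)$, apply the integrator hypothesis to $f=V$ (legitimate since the bound on $D^2\Phi$ makes $\|D^2V\|$ bounded), and absorb the resulting $O\bigl(h(1+H)\bigr)$ error for $h$ small. The only difference is in the absorption step: the paper notes $1+H\le C(1+V)$ and uses the conclusion $LV\le -\gamma V+K$ of Lemma~\ref{lemma:rhmc_drift_condition} as a black box, giving $\gamma_h=\gamma-hC_V$, whereas you reopen that lemma's proof and absorb into the dissipation term via \eqref{eq:LV}, Hypothesis~\ref{hypothesis_on_phi} \textbf{H2} and \eqref{eq:v2} --- equally valid, just slightly longer, with one harmless bookkeeping slip: since the coefficient multiplying the dissipation can be as large as $2c_1$, your final constant should be $K_h = 2c_1 a + K_h'$ rather than $c_1 a + K_h'$.
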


\begin{proof}
By hypothesis, there exists a constant $C_V>0$ such that \[
h^{-1} (V(\theta_h(q,p)) - V(q,p)) \le \mathcal{L} V(q,p) + h  (1 + V(q,p) ) C_V.
\] Since $V$ is a Lyapunov function for $L$, we have that \begin{align*}
L_h^{(1)} V(q,p) &\le L V(q,p)  + h  (1 + V(q,p) ) C_V  \\
&\le - \gamma V(q,p) + K  + h  (1 + V(q,p) ) C_V \\
&\le - (\gamma - h C_V) V(q,p) + ( K + h C_V)
\end{align*}
which gives the desired result with $\gamma_h = \gamma - h C_V$ and $K_h = K + h C_V$.
\end{proof}


Unfortunately, due to the discretization error, the integrator $\theta_h$ does not  preserve the Hamiltonian function exactly, and thus, the invariant measure of $L_h^{(1)}$ is not the invariant measure of $L$.

\subsection{Variant \#2}

In order to correct the bias in $L_h^{(1)}$, we add to Algorithm~\ref{algo:approx_1}  the possibility of  additional jumps (momentum flips) from $(q,p)$ to $\phi(q,p) = (q,-p)$, as follows:

\begin{algorithm}\label{algo:approx_2}
 Let $\nu(q,p) = \exp( - H(q,p))$ and define the Metropolis ratio \[
 \alpha_h(q,p) = 1 \wedge \frac{\nu(\theta_h(q,p))}{\nu(q,p)}
 \]
 for all $(q,p) \in \mathbb{R}^{2D}$.   Given a duration parameter $\lambda > 0 $, a step length parameter $h>0$, the current time $t_0$, and the current state $( \tilde Q(t_0), \tilde P(t_0) ) \in \mathbb{R}^{2D}$, the method outputs an updated state  $( \tilde Q(t_1), \tilde P(t_1) )$ at the random time $t_1$ using two steps
\begin{description}
\item[Step 1] Generate an exponential random variable $\delta t$ with mean $h \lambda / (h + \lambda)$, and update time via $t_1 = t_0 + \delta t$.
\item[Step 2] Generate a uniform random variable $u \sim U(0,1)$ and set \begin{align*}
& ( \tilde Q(t_1), \tilde P(t_1) ) = \\
& \quad \begin{dcases}
\Gamma(\tilde Q(t_0), \tilde P(t_0) ) & u\leq \frac{h}{h+\lambda}\\
\theta_h(\tilde Q(t_0), \tilde P(t_0) ) & \frac{h}{h+\lambda} < u \le \frac{h+\alpha_h(\tilde Q(t_0), \tilde P(t_0) )\lambda }{h+\lambda}\\
\phi(\tilde Q(t_0), \tilde P(t_0) ) &  \text{otherwise}
\end{dcases}
\end{align*}
where $\Gamma$ is the momentum randomization map given in \eqref{eq:momentum_randomization}.
\end{description}
\end{algorithm}

The infinitesimal generator  associated to Algorithm~\ref{algo:approx_2}  is given by 
\begin{equation}\label{mcam_rhmc}
\begin{aligned}
&L_h^{(2)}f(q,p) = \lambda^{-1} \E \left\{ f(  \Gamma (q,p) ) - f(q,p) \right\}\\&  \qquad \qquad+ h^{-1}\alpha_h(q,p) \left( f(\theta_h(q,p)) - f(q,p) \right)  \\
& \qquad \qquad+ h^{-1} \left( 1 -  \alpha_h(q,p)\right) \left( f(\phi(q,p)) - f(q,p) \right)\;.
\end{aligned}
\end{equation}
 Like $L_h^{(1)}$, the   generator $L_h^{(2)}$ induces a Markov jump process with a constant mean holding time given by \eqref{eq:ht}. If $\lambda$ is large and $h$ is small, this process mainly jumps from $(q,p)$ to $\theta_h(q,p)$, with occasional randomizations of momentum and momentum flips. As we show next, the weights $\alpha_h$ and $(1-\alpha_h)$ in \eqref{mcam_rhmc} have been chosen to ensure  that, for suitable integrators, $L_h^{(2)}$ has the same infinitesimally invariant measure of $L$.
 
\begin{prop}
Suppose that the integrator $\theta_h$  is {\em reversible}, i.e. $\theta_{\Delta t}\circ\phi\circ \theta_{\Delta t} = \phi$ and {\em volume-preserving}, i.e.\ $\det( D \theta_h) = 1$.
Then, for all $f \in C^{\infty}_c( \mathbb{R}^{2D} ) $, we have that \[
\int_{\mathbb{R}^{2D} } L_h^{(2)} f(q,p) \nu(q,p) dq dp = 0 \;.
\]
\end{prop}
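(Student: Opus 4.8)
The plan is to decompose $L_h^{(2)}$ into its three summands and dispose of the momentum-randomization part first. The summand $\lambda^{-1}\E\{f(\Gamma(q,p)) - f(q,p)\}$ is exactly the momentum-randomization operator appearing in the generator \eqref{eq:generator}, so the argument in the proof of Proposition~\ref{prop:infinitesimal_invariance}---which uses only that $\Gamma$ preserves $\PiBG$---already gives $\int_{\mathbb{R}^{2D}}\E\{f(\Gamma(z)) - f(z)\}\,\nu(z)\,dz = 0$. Since Hypothesis~\ref{hypothesis_on_phi} {\bf H1} makes $\nu$ integrable on $\mathbb{R}^{2D}$, $f\in C^\infty_c$, and $0\le\alpha_h\le 1$, all the integrals below converge absolutely. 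It will therefore remain to prove
\begin{equation} \label{eq:mcam2_reduction}
\int_{\mathbb{R}^{2D}}\Big[\alpha_h(z)\big(f(\theta_h(z)) - f(z)\big) + \big(1-\alpha_h(z)\big)\big(f(\phi(z)) - f(z)\big)\Big]\nu(z)\,dz = 0,
\end{equation}
where $\phi(q,p)=(q,-p)$ is the momentum-flip map.

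Next I would establish the Metropolis-type detailed-balance identity that is the heart of the matter. Because $\phi$ is an involution, the reversibility relation $\theta_h\circ\phi\circ\theta_h=\phi$ makes $\theta_h$ a diffeomorphism with $\theta_h^{-1}=\phi\circ\theta_h\circ\phi$. Combining this with $\det D\theta_h=1$, $\det D\phi=1$, the symmetry $\nu\circ\phi=\nu$ (the kinetic energy depends on $p$ only through $|p|^2$), and the pointwise identity $\alpha_h(z)\,\nu(z)=\nu(z)\wedge\nu(\theta_h(z))$, the substitution $z\mapsto\theta_h(z)$ followed by $z\mapsto\phi(z)$ transforms $\int\alpha_h(z)f(\theta_h(z))\nu(z)\,dz$ into $\int\big(\nu(\theta_h(z))\wedge\nu(z)\big)f(\phi(z))\,dz$, that is,
\[
\int_{\mathbb{R}^{2D}}\alpha_h(z)\,f(\theta_h(z))\,\nu(z)\,dz = \int_{\mathbb{R}^{2D}}\alpha_h(z)\,f(\phi(z))\,\nu(z)\,dz.
\]
This says that the accepted-integrator kernel and the momentum-flip kernel push the measure $\alpha_h\,\nu$ forward to the same measure.

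Finally, substituting this identity into \eqref{eq:mcam2_reduction}, the $\alpha_h f(\theta_h)$ term merges with the $(1-\alpha_h)f(\phi)$ term to produce $\int f(\phi(z))\,\nu(z)\,dz$, leaving $\int_{\mathbb{R}^{2D}}\big(f(\phi(z))-f(z)\big)\nu(z)\,dz$, which vanishes by one more use of $\nu\circ\phi=\nu$ and $\det D\phi=1$; this closes the proof. The hard part is purely bookkeeping: I must chain the $\theta_h$ change of variables and the momentum flip in the correct order, use reversibility to rewrite $\theta_h^{-1}$, and verify at each step that the Jacobian is unimodular and that $\nu$ is preserved. There is nothing deep beyond that---volume preservation and reversibility of $\theta_h$ are exactly the hypotheses that make the single-step Metropolized integrator $\nu$-symmetric, and the factor $h^{-1}$ together with the splitting of the jump rate between $\theta_h$ and $\phi$ are arranged precisely so that the rejected moves reinject mass in a way compatible with that symmetry.
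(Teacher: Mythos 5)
Your proposal is correct and takes essentially the same route as the paper: both dispose of the momentum-randomization term by the argument of Proposition~\ref{prop:infinitesimal_invariance}, and both rest on the changes of variables $z\mapsto\theta_h(z)$ and $z\mapsto\phi(z)$ combined with $\det D\theta_h=1$, $\theta_h^{-1}=\phi\circ\theta_h\circ\phi$, and $\nu\circ\phi=\nu$. The only difference is presentational: you package the cancellation as the single identity $\int\alpha_h\,(f\circ\theta_h)\,\nu=\int\alpha_h\,(f\circ\phi)\,\nu$, while the paper writes $\alpha_h\nu=\nu\wedge(\nu\circ\theta_h)$ and cancels the resulting four integrals directly.
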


\begin{proof}
Note that \begin{align*}
& \int_{\mathbb{R}^{2D} } L_h^{(2)} f(q,p) \nu(q,p) dq dp = \int_{\mathbb{R}^{2D} } \lambda^{-1} \E \left\{ f(  \Gamma (q,p) ) - f(q,p) \right\}  \nu(q,p) dq dp \\
&\quad +  \frac{1}{h} \int_{\mathbb{R}^{2D} }  \left( \nu(q,p) \wedge \nu(\theta_h(q,p))  \right) \left( f(\theta_h(q,p)) - f(q,p) \right) dq dp \\
&\quad + \frac{1}{h} \int_{\mathbb{R}^{2D} }  \left( \nu(q,p) -  \nu(q,p) \wedge \nu(\theta_h(q,p))  \right) \left( f(\phi(q,p)) - f(q,p) \right) dq dp
\end{align*}
As discussed in Proposition~\ref{prop:infinitesimal_invariance}, the first integral on the right-hand-side vanishes.  For the next two integrals, since $\theta_h$ is volume-preserving and reversible by hypothesis, a change of variables implies that
\begin{align*}
& \int_{\mathbb{R}^{2D} } L_h^{(2)} f(q,p) \nu(q,p) dq dp = \\
& \qquad \frac{1}{h} \int_{\mathbb{R}^{2D} }  \left[ \nu(\theta_h^{-1}(q,p)) \wedge \nu(q,p)  \right]  f(q,p)  dq dp  \\
& \quad -  \frac{1}{h} \int_{\mathbb{R}^{2D} }  \left[ \nu(q,p) \wedge \nu(\theta_h(q,p) )  \right]  f(q,p)  dq dp  \\
&\quad - \frac{1}{h} \int_{\mathbb{R}^{2D} }  \left[ \nu(\phi(q,p)) \wedge \nu(\theta_h \circ \phi(q,p) )  \right] f(q,p) dq dp \\
&\quad + \frac{1}{h} \int_{\mathbb{R}^{2D} }  \left[ \nu(q,p) \wedge \nu(\theta_h(q,p))  \right] f(q,p) dq dp  = 0
\end{align*}
where in the last step we used the fact that $\nu \circ \theta_h = \nu\circ \phi \circ \theta_h = \nu \circ \theta_h^{-1}$.
\end{proof}

We conjecture that Algorithm~\ref{algo:approx_2} is geometrically ergodic under suitable assumptions on the potential energy function and suitable choices of the reversible, volume preserving integrator. However, the proof of this appears to be involved because one needs to estimate carefully the behavior of the jump rate from $(q,p)$ to $\theta_h(q,p)$ in regions where $H$ is large, and consequently, the integrator has large errors. This analysis will be presented in a future publication.

\section{Conclusion}

In this article we have primarily studied HMC  in the exact integration scenario. The RHMC method introduced here is a version of HMC where the durations of the Hamiltonian flows are independent, exponentially distributed random variables.  This method has an infinitesimal generator which is a linear combination of a momentum randomization operator and a (differential) Liouville operator.

The analysis in Section~\ref{sec:non_asymptotic} related the non-asymptotic properties of RHMC to those of the underdamped Langevin dynamics.  We showed that, under standard  hypotheses on the potential energy function, RHMC possesses a stochastic Lyapunov function of the same form as that of the underdamped Langevin dynamics. However, unlike underdamped Langevin dynamics, the trajectories produced by RHMC are not continuous functions of time due to the instantaneous momentum randomizations.  This difference in regularity made it tricky to use standard approximate controllability arguments to establish a minorization condition for RHMC and an alternative approach was required.   Our analysis showed that the mechanism for dissipation (and hence, exponential stability) in RHMC comes from the momentum randomizations; therefore momentum randomizations play here the role played by the heat bath appearing in underdamped Langevin dynamics.  For sampling, the main qualitative advantage of RHMC compared with underdamped Langevin dynamics is that the paths of RHMC have a stronger tendency to move consistently away from the current state of the chain.  

In a model test problem, we  carried out a quantitative analysis of the sampling performance of RHMC using integrated autocorrelation time and mean squared displacement as metrics.  Our analysis  showed that randomizing the durations of the Hamiltonian flows mitigates some artifacts associated to using Hamiltonian dynamics. In particular, we saw that these sampling metrics depend monotonically on the mean duration parameter.  Numerical examples showed that this monotonicity persists for more general target distributions.   In contrast, these sampling metrics for classical HMC are a more complicated function of its (deterministic) duration parameter.

As an outlook to future developments of RHMC, we considered two approximations of RHMC based on spatially discretizing the infinitesimal
generator of RHMC.  This outlook introduced a new viewpoint to developing Hamiltonian-based MCMC methods.  In particular, it transforms
the problem of approximating the Hamiltonian flow from one of time discretizing Hamilton's equations into one of spatially discretizing its associated Liouville operator in high dimension in a way that generates a Markov process \cite{BoVa2016}.  We showed how to construct such approximations so that they preserve the Boltzmann-Gibbs distribution.   A complete analysis of these approximations to RHMC will be the subject of future work.

\appendix

\section{Harris Theorem}  \label{sec:harris_theorem}

%
%

Harris Theorem states that if a Markov process admits a Lyapunov function such that its sublevel sets are `small', then it is geometrically ergodic.  In this part, we recall this theorem for the convenience of the reader.  Since RHMC has an infinitesimal generator, it is convenient to formulate the Foster-Lyapunov condition in Harris Theorem in terms of an infinitesimal generator.   To make the domain of this generator sufficiently inclusive, we define the infinitesimal generator of a Markov process $\mathsf{X}(t)$ on a Polish state space $\Omega$ equipped with probability measure $\mathbb{P}$ in the following way.

\begin{defn} \label{defn:generator}
Let $D(\mathsf{L})$ be the set of all measurable functions $F: \Omega \to \mathbb{R}$ such that there exists a measurable function $G: \Omega \to \mathbb{R}$ with the property that for any $x \in \Omega$ and for all $t>0$ the process \[
  F(\mathsf{X}(t)) - F(x) -  \int_0^t G(\mathsf{X}(s)) ds \;, \quad \mathsf{X}(0)=x \;,
\]
is a local martingale adapted to the natural filtration of $\mathsf{X}$ under the probability measure $\mathbb{P}$.   Then we define $\mathsf{L} F= G$ and call $\mathsf{L}$ the infinitesimal generator of the process $\mathsf{X}(t)$ with domain $D(\mathsf{L})$.
\end{defn}

This definition seems to be due to Ref.~\cite[Definition (14.15) in Chapter 1]{Da1993}. 
In addition to the infinitesimal generator, denote by $\mathsf{P}_t$ the Markov semigroup of $\mathsf{X}(t)$, and denote the transition probabilities of $\mathsf{X}(t)$ by \[
\mathsf{\Pi}_{t,x}( A) = \Pr( \mathsf{X}(t) \in A \mid \mathsf{X}(0) = x ) \quad \forall t \ge 0 \;, \quad  \forall x \in \Omega \;.
\]
Sufficient conditions for Harris Theorem to hold are Hypotheses~\ref{driftcondition} and ~\ref{minorization} given below.

\begin{hypothesis}[Foster-Lyapunov Drift Condition] \label{driftcondition}
There exist a function $\Psi: \Omega \to \mathbb{R}^+$ and strictly positive constants $\mathsf{w}$ and $\mathsf{k}$ such that  \begin{equation} \label{foster_lyapunov_drift_condition}
\mathsf{L} \Psi ( x) \le - \mathsf{w} \; \Psi(x) + \mathsf{k}  \;,
\end{equation}
for all $x \in \Omega$.  
\end{hypothesis}

\begin{rem} \label{rem:driftcondition}
Hypothesis~\ref{driftcondition} implies that \[
\mathsf{P}_t \Psi(x) \le e^{-\mathsf{w} t} \Psi(x) + \frac{\mathsf{k}}{\mathsf{w}} (1-e^{-\mathsf{w} t} )
\]
for every $t\ge0$ and for every $x \in \Omega$.
\end{rem}

\begin{hypothesis}  \label{minorization}
There exist $\mathsf{a} > 0$ and $t>0$ such that the sublevel set $\{ z \in \Omega \mid  \Psi(z) \le 2 \mathsf{k}/\mathsf{w}\}$
is `small' i.e.
\begin{equation} \label{weakerminor}
\| \mathsf{\Pi}_{t,x}- \mathsf{\Pi}_{t,y} \|_\TV \le 2(1 - \mathsf{a})
\end{equation}
for every pair $x, y$ satisfying $\Psi(x) \vee \Psi(y) \le 2 \mathsf{k}/\mathsf{w}$, where the constants $\mathsf{k}$ and $\mathsf{w}$ are taken from Hypothesis~\ref{driftcondition}.
\end{hypothesis}

%
%

\begin{theorem}[Harris Theorem] \label{uncountable_harris}
Consider a Markov process $\mathsf{X}(t)$ on $\Omega$ with generator $\mathsf{L}$ and transition probabilities $\mathsf{\Pi}_{t,x}$,  which satisfies  Hypotheses~\ref{driftcondition}  and~\ref{minorization}.   Then $\mathsf{X}(t)$ possesses a unique invariant probability measure $\mathsf{\Pi}$, and there exist positive constants $\mathsf{C}$ and $\mathsf{r}$ (both depending only on the constants $\mathsf{w}$, $\mathsf{k}$ and $\mathsf{a}$ appearing in the assumptions) such that
\[
\| \mathsf{\Pi}_{t,x} - \mathsf{\Pi} \|_\TV  \le \mathsf{C} \; \exp(-\mathsf{r} \; t)  \; \Psi(x)\;,
\]
for all $x \in \Omega$ and for any $t \ge 0$.
\end{theorem}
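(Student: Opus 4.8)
The plan is to reduce the theorem to the discrete-time Harris theorem of Hairer and Mattingly applied to the skeleton chain $\{\mathsf{X}(nt_*)\}_{n\ge0}$, and then to transfer the conclusion to the full semigroup. Fix the time $t_*>0$ furnished by Hypothesis~\ref{minorization}. By Remark~\ref{rem:driftcondition} the transition kernel $\mathsf{\Pi}_{t_*,\cdot}$ of the skeleton chain satisfies the discrete drift inequality $\mathsf{P}_{t_*}\Psi\le\gamma_*\Psi+\mathsf{k}_*$ with $\gamma_*=e^{-\mathsf{w}t_*}\in(0,1)$ and $\mathsf{k}_*=(\mathsf{k}/\mathsf{w})(1-e^{-\mathsf{w}t_*})$, so that $2\mathsf{k}_*/(1-\gamma_*)=2\mathsf{k}/\mathsf{w}$, which is precisely the level at which Hypothesis~\ref{minorization} gives a uniform contraction of total variation. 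For a weight $\beta>0$ to be chosen, introduce on $\Omega$ the distance-like function
\[
d_\beta(x,y)=\mathbf{1}_{\{x\neq y\}}\bigl(2+\beta\Psi(x)+\beta\Psi(y)\bigr)
\]
together with the Kantorovich semimetric $W_\beta$ it induces on probability measures. The core estimate is that there exist $\beta>0$ and $\bar\alpha\in(0,1)$, depending only on $\mathsf{w},\mathsf{k},\mathsf{a}$, with $W_\beta(\mathsf{\Pi}_{t_*,x},\mathsf{\Pi}_{t_*,y})\le\bar\alpha\,d_\beta(x,y)$ for all $x,y\in\Omega$. This is proved by dichotomy on the size of $\Psi(x)+\Psi(y)$: when $\Psi(x)+\Psi(y)$ is large the drift inequality makes the $\beta\Psi$ part of $d_\beta$ contract, with slack enough to absorb the additive $2$; when it is small, both points lie in the small sublevel set and Hypothesis~\ref{minorization} produces a coupling of $\mathsf{\Pi}_{t_*,x}$ and $\mathsf{\Pi}_{t_*,y}$ charging the diagonal with probability at least $\mathsf{a}$, which kills the $2$. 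The separating threshold and $\beta$ are tuned so the two estimates combine into a single rate. (To close the dichotomy cleanly one wants smallness at a level slightly above $2\mathsf{k}/\mathsf{w}$, which is harmless, since in the application the minorization holds on every sublevel set.)

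Next, the set of probability measures $\mu$ with $\mu(\Psi)<\infty$, metrized by $W_\beta$, is complete, is preserved by $\mu\mapsto\mu\mathsf{P}_{t_*}$ (by the drift inequality), and on it $\mu\mapsto\mu\mathsf{P}_{t_*}$ is an $\bar\alpha$-contraction; Banach's fixed-point theorem yields a unique invariant probability measure $\mathsf{\Pi}$ in this class with $W_\beta(\mu\mathsf{P}_{t_*}^{\,n},\mathsf{\Pi})\le\bar\alpha^{\,n}W_\beta(\mu,\mathsf{\Pi})$, and since any invariant probability measure has finite $\Psi$-moment (integrate the drift inequality) this is in fact the unique invariant probability measure. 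For each $s\ge0$ one has $(\mathsf{\Pi}\mathsf{P}_s)\mathsf{P}_{t_*}=(\mathsf{\Pi}\mathsf{P}_{t_*})\mathsf{P}_s=\mathsf{\Pi}\mathsf{P}_s$, with $\mathsf{\Pi}\mathsf{P}_s$ again of finite $\Psi$-moment, hence $\mathsf{\Pi}\mathsf{P}_s=\mathsf{\Pi}$, so $\mathsf{\Pi}$ is invariant for the whole semigroup. Finally, for $t\ge0$ write $t=nt_*+\rho$ with $\rho\in[0,t_*)$; since $2\,\mathbf{1}_{\{x\neq y\}}\le d_\beta$ gives $\|\cdot\|_\TV\le W_\beta$, and $\mathsf{\Pi}_{t,x}=\mathsf{\Pi}_{\rho,x}\mathsf{P}_{t_*}^{\,n}$,
\[
\|\mathsf{\Pi}_{t,x}-\mathsf{\Pi}\|_\TV\le\bar\alpha^{\,n}\,W_\beta(\mathsf{\Pi}_{\rho,x},\mathsf{\Pi})\le\bar\alpha^{\,n}\bigl(2+\beta\,\mathsf{P}_\rho\Psi(x)+\beta\,\mathsf{\Pi}(\Psi)\bigr),
\]
and bounding $\mathsf{P}_\rho\Psi(x)\le\Psi(x)+\mathsf{k}/\mathsf{w}$ by Remark~\ref{rem:driftcondition} and $n\ge t/t_*-1$ gives $\|\mathsf{\Pi}_{t,x}-\mathsf{\Pi}\|_\TV\le\mathsf{C}\,e^{-\mathsf{r}t}\,(1+\Psi(x))$ with $\mathsf{r}=-t_*^{-1}\log\bar\alpha$ and $\mathsf{C}$ depending only on $\mathsf{w},\mathsf{k},\mathsf{a}$. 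Adding a positive constant to $\Psi$ preserves the drift condition, so we may assume $\Psi\ge1$ and then $1+\Psi\le2\Psi$, which puts the bound in the stated form.

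I expect the only real difficulty to be the one-step contraction $W_\beta(\mathsf{\Pi}_{t_*,x},\mathsf{\Pi}_{t_*,y})\le\bar\alpha\,d_\beta(x,y)$: choosing the weight $\beta$ and the far/close threshold so that the drift and minorization regimes contract at a common rate depending only on $\mathsf{w},\mathsf{k},\mathsf{a}$, and constructing the diagonal-charging coupling in the close regime out of Hypothesis~\ref{minorization}. Everything else — the fixed-point step, the identification of $\mathsf{\Pi}$ as invariant for the whole semigroup, and the interpolation from discrete times $nt_*$ to arbitrary $t$ — is routine manipulation of the semigroup property and the drift inequality.
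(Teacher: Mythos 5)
The paper does not actually prove this theorem: it states it in the appendix and defers to \cite{MeTw2009} and \cite{HaMa2011} for proofs. Your argument is, in substance, a correct reconstruction of the Hairer--Mattingly proof (the second of the cited sources), together with the standard and correctly executed transfer from the skeleton chain $\{\mathsf{X}(nt_*)\}$ to the full semigroup: the discrete drift inequality for $\mathsf{P}_{t_*}$ with $\gamma_*=e^{-\mathsf{w}t_*}$, the weighted semimetric $d_\beta$, the far/close dichotomy combining drift contraction with a diagonal-charging coupling from the minorization, Banach's fixed point on the $\Psi$-integrable probability measures, the commutation argument identifying $\mathsf{\Pi}$ as invariant for every $\mathsf{P}_s$, and the interpolation $t=nt_*+\rho$ are all sound. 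Two small points deserve to be made explicit rather than parenthetical. First, the Hairer--Mattingly contraction genuinely needs smallness of a sublevel set at a level \emph{strictly} above $2\mathsf{k}_*/(1-\gamma_*)=2\mathsf{k}/\mathsf{w}$, whereas Hypothesis~\ref{minorization} as literally stated gives exactly that borderline level; the clean fix is to run the drift condition with a slightly smaller $\mathsf{w}'<\mathsf{w}$, which enlarges the required small set, and to invoke the fact (true in the application, via Lemma~\ref{lemma:minorization}, but an additional assumption at the level of the abstract theorem) that every sublevel set of $\Psi$ is small. Second, the same caveat applies to your final normalization $\Psi\mapsto\Psi+1$, which changes $\mathsf{k}$ to $\mathsf{k}+\mathsf{w}$ and hence again enlarges the sublevel set on which minorization is needed; without some such normalization the stated bound $\mathsf{C}e^{-\mathsf{r}t}\Psi(x)$ cannot hold at points where $\Psi$ vanishes, so this step is necessary and should be flagged as using the stronger, every-sublevel-set form of the minorization. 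With those caveats recorded, the proof is complete and matches the route the paper implicitly endorses by citation.
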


\begin{rem}
It follows from \eqref{foster_lyapunov_drift_condition} that: \[
\mathsf{\Pi}( \Psi ) \le \frac{ \mathsf{k} }{ \mathsf{w} } \;.
\]
\end{rem}

For further exposition and a proof of Harris Theorem in a general context, see the monograph \cite{MeTw2009} (or Ref.~\cite{HaMa2011} for an alternative proof), and for a treatment in the specific context of stochastic differential equations see Ref.~\cite{MaStHi2002}.

\bigskip

\section*{Acknowledgements}
We wish to thank Christof Andrieu, Paul Dupuis and Andreas Eberle for useful discussions.  We also wish to thank the anonymous referee and associate editor for their careful reading of the mansucript.

\bibliographystyle{amsplain}
\bibliography{nawaf}

\end{document}